\documentclass[nonacm, screen]{acmart} 
\AtBeginDocument{%
  }

\usepackage{bussproofs}

\usepackage{tikz} 
\usepackage{tikz-cd} 
\usetikzlibrary{cd} 

\usepackage[shortlabels]{enumitem}

\usepackage[normalem]{ulem} 

\usepackage{stmaryrd}
\usepackage{proof} 

\usepackage{refcount}

\setcopyright{acmlicensed}
\copyrightyear{2026}
\acmYear{2026}
\acmDOI{XXXXXXX.XXXXXXX}
\acmConference[LICS 2026]{Make sure to enter the correct
  conference title from your rights confirmation email}{July 20--23,
  2026}{Lisbon, Portugal}
\acmISBN{978-1-4503-XXXX-X/2018/06}

\begin{document}

\title{The logic of bunched implications is undecidable}

\author{Nick Galatos}
\affiliation{%
  \institution{University of Denver}
  \city{Denver}
  \state{Colorado}
  \country{USA}}
\email{ngalatos@du.edu}

\author{Peter Jipsen}
\affiliation{%
  \institution{Chapman University}
  \city{Orange}
  \state{California}
  \country{USA}
}
\email{jipsen@chapman.edu}

\author{Søren Brinck Knudstorp}
\affiliation{%
  \institution{ILLC and Philosophy, University of Amsterdam}
  \city{Amsterdam}
  \country{the Netherlands}
}
\email{s.b.knudstorp@uva.nl}
\orcid{0009-0008-9835-4195}

\author{Revantha Ramanayake}
\affiliation{%
 \institution{University of Groningen}
 \city{Groningen}
 \country{the Netherlands}}
\email{d.r.s.ramanayake@rug.nl}
\orcid{0000-0002-7940-9065}


\begin{abstract}
The logic of bunched implications (BI), introduced by O’Hearn and Pym (1999), has attracted significant attention due to its
elegant proof calculus, varied semantics, and close connections to the propositional fragment of separation logic. We show here that provability in BI is undecidable by encoding Wang tilings into its  ternary relational semantics. Equivalently, this yields the undecidability of the equational theory of BI-algebras.

Our result is much more general, applying to the $\{\land, \lor, \neg, \mimp\}$-fragment of stronger and weaker logics: the negation simply needs to be disjointive, and 
the multiplicative conjunction need not be commutative (then $\mimp$ splits into two divisions $\ld, \rd$). Consequently, our result covers an interval that includes BI, the non-commutative logic GBI, and Boolean BI (BBI), the latter already known to be undecidable.

This result contrasts with a long-standing expectation that BI might be decidable.
We also identify the gaps in the publications claiming decidability.
\end{abstract}

\keywords{Bunched implication logic, intuitionistic logic with operators, residuated lattices, substructural logics, undecidable logics, tiling problem.}



\theoremstyle{acmdefinition} 
\newtheorem{remark}[theorem]{Remark}

\newcounter{savedcounter}
\newcounter{savedsection}

\newcommand{\dom}{\mathrm{dom}} 
\newcommand{\Log}{\mathrm{Log}} 
\newcommand{\Z}{\mathbb{Z}}
\newcommand{\Q}{\mathbb{Q}}
\newcommand\sepimp{\mathrel{-\mkern-3mu\ast}} 

\newcommand{\LBIold}{\text{LBI}_{0}}
\newcommand{\DC}{\Rightarrow}
\newcommand{\IL}{\text{Ip}}
\newcommand{\MILL}{\text{MILL}}
\newcommand{\mimp}{\sepimp}
\newcommand{\imp}{\rightarrow}
\newcommand{\depth}{\mathsf{d}}

\newcommand{\Ra}{\Rightarrow}
\newcommand{\BI}{\text{BI}}
\newcommand{\config}[2]{#1 #2}
\newcommand{\qRorR}{q_f\oplus 1\oplus q_f}
\newcommand{\qore}{q\oplus\epsilon\oplus\epsilon}
\newcommand{\landor}{(\!\land\!)}

\newcommand{\regnum}{k}
\newcommand{\Ij}{I_j}
\newcommand{\qj}{q_j}
\newcommand{\Rj}{R_j}
\newcommand{\Ik}{I_k}
\newcommand{\qk}{q_k^\epsilon}
\newcommand{\Rk}{R_k}
\newcommand{\Jk}{J_k}
\newcommand{\Succ}{S}
\newcommand{\Mk}{M_k}
\newcommand{\Mj}{M_j}
\newcommand{\Inst}{I}
\newcommand{\Is}{I_s}
\newcommand{\States}{Q}
\newcommand{\qop}{{q_0^\epsilon}'}
\newcommand{\qopp}{{q_0^\epsilon}''}
\newcommand{\cl}[1]{\mathsf{c}(#1)}
\newcommand{\oc}[1]{\hat{#1}}
\newcommand{\occ}[1]{\oc{\oc{#1}}}

\newcommand{\NOTE}[2]{{\color{blue}#2}\textsuperscript{{\tiny\bf\color{red} {#1}}}}
\newcommand{\CHAT}[2]{{\small\bf\color{red} {#1: }}{\color{blue}#2}}

\newcommand{\ld}{{\backslash}}
\newcommand{\rd}{{/}}
\newcommand{\ra}{\mathbin{\rightarrow}}
\newcommand{\jn}{\vee}
\newcommand{\mt}{\wedge}
\newcommand{\m}{\mathbf}
\newcommand{\ldd}{{\bbslash}}
\newcommand{\rdd}{{\sslash}}
\newcommand{\omt}{\curlywedge}
\newcommand{\rdo}{\rightthreetimes}
\newcommand{\ldo}{\leftthreetimes}
\newcommand{\1}{\varepsilon}
\newcommand{\T}{\epsilon}
\newcommand{\btl}{\triangleleft}
\newcommand{\btr}{\triangleright}
\newcommand{\ua}{{\uparrow}}
\newcommand{\N}{\mathrel{N}}

\maketitle

\section{Introduction}



The logic of bunched implications (BI) was introduced by O'Hearn and Pym~\cite{OHearnP99:jsl} in 1999 as a logic combining additive (intuitionistic) and multiplicative (substructural) connectives. In particular, the logic contains two implications: an intuitionistic implication and a substructural implication. From a proof-theoretic perspective---its original formulation---BI has an elegant definition as the free combination of the sequent calculi for intuitionistic propositional logic and multiplicative intuitionistic linear logic (MILL). 

BI provides an intuitionistic framework for reasoning about resources~\cite{IshOHe2001}. Indeed, the early attention received by BI, and its Boolean counterpart BBI, can be attributed to its prominent role as the assertion logic of separation logic, an extension of Hoare-style reasoning for imperative programs that manipulate pointers and mutable heaps. 
Separation logic~\cite{Reynolds00:intuitionistic,Reynolds02:lics} enabled reasoning about the use of shared data structures, heap mutation, pointer aliasing and (de)allocation of memory, followed by later applications to concurrent dynamic memory management~\cite{Brookes2007, BrookesOH2016}. In these interpretations, the logical connective $\ast$ can be used to make assertions that hold on disjoint portions of the heap, hence enabling local reasoning about a program’s memory footprint~\cite{PymOHY04:tcs}. This approach dramatically simplifies verification of heap-manipulating programs and scales modular reasoning to large codebases by permitting a local focus rather than on the global heap.

The practical significance of separation logic is evident in modern automated verification tools: Infer~\cite{Calcagno2011, InferWWW},  a static analysis tool that is deployed at Facebook/Meta uses separation logic to analyze millions of lines of code, in order to find null-pointer dereferences, memory leaks, and other bugs. This is a striking example of the utility of formal logic in real-world software reliability and safety.

In the decades following its introduction, the interest in BI has extended well beyond its connections to separation logic. This inspired the study of further logics in the vicinity, such as Generalized BI~\cite{GalatosJ}  (non-commutative version of BI), and its counterpart in the classical setting of Boolean algebras with operators \cite{JonssonTarski1951, JonssonTsinakis93}. The algebraic semantics of GBI-logic is given by residuated Heyting algebras, i.e., bounded residuated lattices with a Heyting implication (hence the underlying lattice is distributive). These algebras are called GBI-algebras, and if the Heyting algebra is a Boolean algebra, 
then they are known as residuated monoid algebras or rm-algebras for short. The corresponding logic is abbreviated BGBI and its commutative version is BBI.

Against this backdrop, the computational status of BI has remained its most prominent open question.
Given the apparent simplicity of its sequent calculus, and with provability for its two constituent systems being in PSPACE, there was an expectation---several papers~\cite{GalmicheMP05:mscs,KaminskiFrancez16,GalatosJ} were even published claiming this result---that BI was decidable. This continued to be the case even when the undecidability of BGBI and BBI, first proven in~\cite{KuruczNSS95:jolli}, became widely known through independent rediscoveries (in the commutative case) by \cite{BrotherstonK10:lics,BrotherstonK14:jacm, Larchey-WendlingG10:lics}. 

In this work, we finally resolve the computational status of BI by showing that many bunched implication logics, including BI and GBI as well as their $\{\land, \lor, \neg, \ld, \rd\}$-reducts, are undecidable. The proof proceeds by a reduction from the Wang Tiling Problem and it may be of standalone interest, as cognate proof methods have recently been useful within both relevant and modal logic~\cite{Knudstorp24,Knudstorp25}.

The paper is organized as follows. Section~\ref{sec:SemSem} includes the preliminaries needed to state and clarify the scope of the undecidability result. Section~\ref{sec:Undecidability} contains the main argument showing that the tiling problem can be interpreted into the class of disjointive distributive residuated lattices. In Section~\ref{sec:erroneous}, we point out the gaps in previous claims of decidability for BI. Finally, Section~\ref{sec:duality} provides details of the connection between the algebraic semantics and frame semantics of disjointive distributive residuated lattices.

Additionally, an alternative proof of undecidability for BI---a reduction from the acceptance problem in And-branching Counter Machines (ACMs)---appears in Appendix~\ref{sec-ACM-reduction}.

\section{Preliminaries and Results}\label{sec:SemSem}
In this section, we set out the preliminaries, state the main result, and explain how it applies to BI in particular. The logic BI can be defined in three equivalent ways: via a proof-theoretic sequent calculus (see Figure~\ref{fig-LBI-calculus}), via BI-algebras (presented below), and via its relational semantics (relevant parts discussed later).


\begin{definition}[Formulas, bunches, and sequents]\label{d: fbs}
    For a denumerable set $P$ of propositional letters, the \textit{formulas} of BI are given by the grammar:
    \begin{align*}
        \varphi\mathrel{::=} p\in P \mid \top \mid \bot \mid 1\mid\varphi\land\varphi \mid\varphi\lor\varphi\mid \varphi\to\varphi \mid \varphi\ast\varphi\mid \varphi\sepimp\varphi.
    \end{align*}
    \textit{Bunches} are, in turn, defined as follows:
    \begin{align*}
        \Gamma\mathrel{::=} \varphi \mid \varnothing_+ \mid \varnothing_\times \mid \Gamma; \Gamma\mid\Gamma,\Gamma.
    \end{align*}
    \textit{Sequents} are pairs $\Gamma\DC \varphi$, where $\Gamma$ is a bunch and $\varphi$ is a formula. We write $\vdash_{\text{BI}}\Gamma\DC \varphi$, or simply $\vdash\Gamma\DC \varphi$, if the sequent $\Gamma\DC \varphi$ is  provable in the bunched calculus of BI (given in  Figure~\ref{fig-LBI-calculus}), and $\nvdash\Gamma\DC \varphi$ if it is not.
\end{definition}



\begin{figure*}[t]
\centering
\footnotesize

\begin{tabular}{@{}ccccc@{}}

\AxiomC{}
\RightLabel{\scriptsize ax}
\UnaryInfC{$\phi \Ra \phi$}
\DisplayProof
&
\AxiomC{$\Gamma \Ra \phi$}
\RightLabel{\scriptsize $\Delta \equiv \Gamma$}
\UnaryInfC{$\Delta \Ra \phi$}
\DisplayProof
&
\AxiomC{$\Gamma(\Delta)\Ra \phi$}
\RightLabel{\scriptsize w}
\UnaryInfC{$\Gamma(\Delta;\Delta')\Ra \phi$}
\DisplayProof
&
\AxiomC{$\Gamma(\Delta;\Delta)\Ra \phi$}
\RightLabel{\scriptsize c}
\UnaryInfC{$\Gamma(\Delta)\Ra \phi$}
\DisplayProof
&
\AxiomC{$\Delta \Ra \phi$}
\AxiomC{$\Gamma(\phi)\Ra \psi$}
\RightLabel{\scriptsize cut}
\BinaryInfC{$\Gamma(\Delta)\Ra \psi$}
\DisplayProof
\\[1.1em]

\AxiomC{}
\RightLabel{\scriptsize $\bot_L$}
\UnaryInfC{$\Gamma(\bot)\Ra \phi$}
\DisplayProof
&
\AxiomC{$\Gamma(\varnothing_\times)\Ra \phi$}
\RightLabel{\scriptsize $1_L$}
\UnaryInfC{$\Gamma(1)\Ra \phi$}
\DisplayProof
&
\AxiomC{}
\RightLabel{\scriptsize $1_R$}
\UnaryInfC{$\varnothing_\times \Ra 1$}
\DisplayProof
&
\AxiomC{$\Gamma(\varnothing_+)\Ra \phi$}
\RightLabel{\scriptsize $\top_L$}
\UnaryInfC{$\Gamma(\top)\Ra \phi$}
\DisplayProof
&
\AxiomC{}
\RightLabel{\scriptsize $\top_R$}
\UnaryInfC{$\varnothing_+ \Ra \top$}
\DisplayProof
\\[1.25em]

\AxiomC{$\Gamma(\phi,\psi)\Ra \chi$}
\RightLabel{\scriptsize $*_L$}
\UnaryInfC{$\Gamma(\phi * \psi)\Ra \chi$}
\DisplayProof
&
\AxiomC{$\Gamma \Ra \phi$}
\AxiomC{$\Delta \Ra \psi$}
\RightLabel{\scriptsize $*_R$}
\BinaryInfC{$\Gamma,\Delta \Ra \phi * \psi$}
\DisplayProof
&
\AxiomC{$\Delta \Ra \phi$}
\AxiomC{$\Gamma(\psi)\Ra \chi$}
\RightLabel{\scriptsize $\mimp_L$}
\BinaryInfC{$\Gamma(\Delta,\;\phi \mimp \psi)\Ra \chi$}
\DisplayProof
&
\AxiomC{$\Gamma,\phi \Ra \psi$}
\RightLabel{\scriptsize $\mimp_R$}
\UnaryInfC{$\Gamma \Ra \phi \mimp \psi$}
\DisplayProof
&
\AxiomC{$\Gamma \Ra \phi_i \ (i=1,2)$}
\RightLabel{\scriptsize $\lor_{R i}$}
\UnaryInfC{$\Gamma \Ra \phi_1 \lor \phi_2$}
\DisplayProof
\\[1.25em]

\AxiomC{$\Gamma(\phi;\psi)\Ra \chi$}
\RightLabel{\scriptsize $\land_L$}
\UnaryInfC{$\Gamma(\phi \land \psi)\Ra \chi$}
\DisplayProof
&
\AxiomC{$\Gamma \Ra \phi$}
\AxiomC{$\Delta \Ra \psi$}
\RightLabel{\scriptsize $\land_R$}
\BinaryInfC{$\Gamma;\Delta \Ra \phi \land \psi$}
\DisplayProof
&
\AxiomC{$\Delta \Ra \phi$}
\AxiomC{$\Gamma(\psi)\Ra \chi$}
\RightLabel{\scriptsize $\to_L$}
\BinaryInfC{$\Gamma(\Delta;\;\phi \to \psi)\Ra \chi$}
\DisplayProof
&
\AxiomC{$\Gamma;\phi \Ra \psi$}
\RightLabel{\scriptsize $\to_R$}
\UnaryInfC{$\Gamma \Ra \phi \to \psi$}
\DisplayProof
&
\AxiomC{$\Gamma(\phi)\Ra \chi$}
\AxiomC{$\Gamma(\psi)\Ra \chi$}
\RightLabel{\scriptsize $\lor_L$}
\BinaryInfC{$\Gamma(\phi \lor \psi)\Ra \chi$}
\DisplayProof
\\

\end{tabular}

\caption{The LBI sequent calculus~\cite{GalmicheMP05:mscs}.
The $\equiv$ denotes commutative monoid equations for $,$ and $;$ applied to sub-bunches.}
\label{fig-LBI-calculus}
\end{figure*}

It is easy to see that a sequent $\Gamma\DC \varphi$ is provable in the BI calculus iff the sequent $\widehat{\Gamma} \DC \varphi$ is provable iff the sequent $\top \DC \widehat{\Gamma} \imp \varphi$ is provable, where $\widehat{\Gamma}$ is the formula obtained from the bunch $\Gamma$ by replacing comma by $\ast$, semicolon by $\mt$, $\varnothing_+$ by $\top$, and $\varnothing_\times$ by $1$. 
 As a result, the decidability of BI is equivalent to deciding sequents of the form $\top \DC \varphi$.
 
With this, we turn to the algebraic semantics of BI.

\begin{definition}[BI-algebras]
    A \textit{BI-algebra} is an algebra of the form $\mathbf{A}=(A, \land, \lor, \to, \top, \bot, \ast, \sepimp, 1)$ where $(A, \land, \lor, \to, \top, \bot)$ is a Heyting algebra, $(A, \ast, 1)$ is a commutative monoid, and $\ast$ is residuated by $\sepimp$; that is, for all $x,y,z\in A$,
    \[
        x\ast y\leq z\qquad \text{iff}\qquad y\leq x \sepimp z,
    \]
    where $\leq$ is the lattice order. A \emph{BBI-algebra} is a BI-algebra where the underlying Heyting algebra is Boolean.

\end{definition}
\begin{remark}[The algebra $\mathcal{P}_{\omega}(\mathbb{N})^+$]\label{rm:fincofin}
    A specific BI-algebra will play a central role in our proof, as its equational theory will provide an upper bound for an interval of undecidable theories that includes BI. To define it, let $\mathcal{P}_{\omega}(\mathbb{N})\mathrel{:=}\{X\subseteq \mathbb{N}\mid \textit{X} \text{ is finite}\}$ be the set of finite sets of natural numbers. Taking its powerset $\mathcal{P}(\mathcal{P}_{\omega}(\mathbb{N}))$, we form the algebra $\mathcal{P}_{\omega}(\mathbb{N})^+$ by equipping it with operations as follows:
    \[
       \mathcal{P}_{\omega}(\mathbb{N})^+\mathrel{:=}\big(\mathcal{P}(\mathcal{P}_{\omega}(\mathbb{N})), \cap, \cup, \to,\mathcal{P}_{\omega}(\mathbb{N}) , \varnothing, \ast, \sepimp, \{\varnothing\}\big )
    \]
    where
    \begin{itemize}
        \item $\to$ is Boolean implication, i.e., $X\to Y\mathrel{:=}X^c\cup Y$
        \item $\ast$ is point-wise union, i.e., $X\ast Y\mathrel{:=}\{x\cup y\mid x\in X, y\in Y\}$
        \item $\sepimp$ is the residual of $\ast$, i.e., $X\sepimp Y\mathrel{:=}\{z\mid \text{for all }x\in X{:}\; z\cup x\in Y\}$.
    \end{itemize}
    That $\mathcal{P}_{\omega}(\mathbb{N})^+$ is a BBI-algebra follows from the fact that it has a Boolean reduct and it is the powerset algebra of a commutative monoid, so residuation holds (cf., e.g., Theorem 3.32 of \cite{GalatosJKO07}).
\end{remark}
It is well known (e.g., see \cite{GalatosJKO07}) that residuation can be captured equationally, so the class $\mathsf{BI}$ of all BI-algebras is a variety by Birkhoff's theorem. Cf. \cite{PymOHY04:tcs}, the variety $\mathsf{BI}$ serves as algebraic semantics for BI, in that for all BI-formulas $\varphi$ and $\psi$, 
\[
    \vdash \varphi\DC \psi\qquad \text{iff} \qquad \mathsf{BI}\vDash \varphi\leq \psi.
\]
Here, by $\mathsf{BI}\vDash \varphi\leq\psi$, we mean that $\m A\vDash \varphi\leq\psi$ for all BI-algebras~$\mathbf{A}$; this, in turn, means that for all homomorphisms $h$ from the BI-formula (term) algebra to $\mathbf{A}$, it holds that $h(\varphi)\leq h(\psi)$.
Combined with previous observations, we have  $\vdash_{\text{BI}}  \Gamma\DC \psi$  iff $\mathsf{BI}\vDash \widehat{\Gamma} \leq \psi$.

Of interest to us will be the special (and, as discussed before, equivalent) case where $\Gamma=\top$. For brevity, we write $\mathbf{A}\nvDash \varphi$ if $\mathbf{A}\nvDash \top\leq \varphi$, and
we say that the BI-algebra $\mathbf{A}$ \textit{refutes} the formula $\varphi$; we say that $\mathsf{BI}$ refutes $\varphi$, written $\mathsf{BI}\nvDash \varphi$, if there is some BI algebra that refutes $\varphi$.
It then follows that
\begin{align}
    \nvdash\top \DC \varphi\qquad \text{iff}\qquad \mathsf{BI}\nvDash \varphi.\footnotemark\label{eq:BIUnd}
\end{align}
We\footnotetext{Note that we do not denote $\top\DC \varphi$ by the shorter $\DC \varphi$, as the latter is ambiguous between $\varnothing_+\DC \varphi$ and $\varnothing_\times\DC \varphi$, corresponding to $\top\DC \varphi$ and $1\DC \varphi$, respectively.} will show that it is undecidable whether, given input $\varphi$, $\mathsf{BI}\nvDash \varphi$---i.e., that $\mathsf{BI}$ has an undecidable equational theory---and thus get the undecidability of provability for the BI-calculus.

However, the scope is broader: our main theorem establishes undecidability for many systems besides BI, both weaker and stronger. Among these, of perhaps particular interest is the non-commutative variant of BI studied in  \cite{GalatosJ, JipsenLitak2022}, which does not assume commutativity of $\ast$, denotes it by $\cdot$ instead, and trades one residual ($\sepimp$) for two: a left division ($\backslash$) and a right division ($/$). The resulting algebras, called GBI-algebras (generalized bunched implication algebras), form a variety $\mathsf{GBI}$ and are defined as follows.
\begin{definition}[GBI-algebras]
    A \textit{GBI-algebra} is an algebra of the form $\mathbf{A}=(A, \land, \lor, \to, \top, \bot, \cdot, \backslash, /, 1)$ where $(A, \land, \lor, \to, \top, \bot)$ is a Heyting algebra, $(A, \cdot, 1)$ is a monoid, and $\backslash, \slash$ are the left and right divisions of $\cdot$; that is, for all $x,y,z\in A$,
    \[
        x\cdot y\leq z\qquad 
        \text{iff}\qquad y\leq x\backslash z\qquad \text{iff}\qquad x\leq z /y.
    \]
\end{definition}
Observe that if $\cdot$ is commutative, then $x\backslash y=y/x$, whence BI-algebras are precisely the commutative GBI-algebras.

    For undecidability, a much weaker setting suffices. In Heyting algebras---the additive part of (G)BI-algebras---the intuitionistic negation is defined by $\neg x\mathrel{:=}x\to \bot$ and satisfies the laws of a pseudocomplement, i.e., $x \mt y \leq \bot$ iff $y \leq \neg x$.  Every Heyting algebra $(A, \land, \lor, \to, \top, \bot)$ is therefore a pseudocomplemented distributive lattice $(A, \land, \lor, \neg, \top, \bot)$, but not conversely: pseudocomplemented distributive lattices $(A, \land, \lor, \neg, \top, \bot)$ need not have a residual ($\to$) to $\land$. Our proof, however, neither requires a Heyting implication ($\to$) nor a full pseudocomplement (nor even the multiplicative unit $1$). It suffices to have a unary operation $\neg$ validating \textit{explosion}, i.e., $x\land \neg x\leq y$. In this case $\bot:=x\land \neg x$ becomes a definable constant;  alternatively and equivalently, we may add a primitive constant $\bot$ in the language and stipulate the explosion equation in the form $ x\land \neg x =\bot$. In any case, the explosion equation is strictly weaker than the pseudocomplementation demand, as for example in the latter case the negation operation is also antitone and satisfies double-negation introduction. As the equation $ x\land \neg x =\bot$ corresponds to the fact that a set is disjoint from its complement, we refer to it as the \emph{disjointive} equation. The weakest algebras of concern are thus the following.
\begin{definition}
    A 
    \textit{disjointive distributive residuated lattice} is an algebra of the form $(A, \land, \lor, \top, \bot, \neg, \cdot, \backslash, /)$  where $(A, \land, \lor, \top, \bot)$ is a bounded distributive lattice, $(A, \cdot)$ is a semigroup, $\backslash, \slash$ residuate $\cdot$, and $\neg$ is 
    a disjointive operation,
    i.e., for all $x\in A$,
    \[
        x\land \neg x =\bot.\qedhere
    \]
\end{definition}
Observe that (G)BI-algebras (or their appropriate reducts, which we will conflate when harmless) are, in particular, disjointive distributive residuated lattices. Specifically, the BI-algebra $\mathcal{P}_{\omega}(\mathbb{N})^+$ from Remark~\ref{rm:fincofin} is a disjointive distributive residuated lattice.

As a final generalization worth mentioning, we obtain undecidability already in the fragment of the language without $\cdot$; algebraically, this corresponds to the equational theories of the $\{\cdot\}$-free reducts. In the case of BI, cf. \eqref{eq:BIUnd}, this means undecidability of deciding whether $\vdash\top\DC \varphi$ (or $\vdash\varnothing_+\DC \varphi$), when the input $\varphi$ is a formula in the language $\{\top, \bot, \land, \lor, \neg, \sepimp\}$ (where $\neg\alpha\mathrel{:=}\alpha\to \bot$).\footnote{\label{fn: language}In fact, we even show undecidability for formulas $\varphi$ in the language without the bounds, $\{\land, \lor, \neg, \sepimp\}$, albeit $\bot$ is definable in this language as $x\land \neg x$ and $\top$ as $\bot\sepimp\bot$.}
\\\\
This explains the general setting to which our undecidability result pertains. The result is obtained by a reduction from the \textit{Wang Tiling Problem}, formulated by \cite{Wang1963}, which we now proceed to define.
\begin{definition}[Wang tiling]
A \textit{(Wang) tile} is a 4-tuple  $$t=(t_1,t_2,t_3,t_4)\in\mathbb{N}^4.$$ 
We think of $t$ as a square tile with `colors' $t_1,t_2,t_3,t_4$ on its up, down, left, and right edge, respectively, and we define $\text{U}(t)=t_1$, $\text{D}(t)=t_2$, $\text{L}(t)=t_3$, $\text{R}(t)=t_4$.

    Given a finite set $\mathcal{W}$ of tiles, we say that a function $\tau:\mathbb{N}^2\to \mathcal{W}$ is a \textit{tiling (of $\mathbb{N}^2$ with $\mathcal{W}$)}, if $\tau$ assigns matching colors to the common sides of adjacent tiles, i.e., if for all $(m,n)\in \mathbb{N}^2$,
    \begin{align*}
        \text{U}(\tau(m,n))=\text{D}(\tau(m,n+1)) \quad  \text{and} \quad \text{R}(\tau(m,n))=\text{L}(\tau(m+1,n)).
    \end{align*}
    We say that a finite set of tiles $\mathcal{W}$ is a \textit{(Wang) tiling} or that \emph{$\mathcal{W}$ tiles $\mathbb{N}^2$}, if there is a tiling function $\tau:\mathbb{N}^2\to \mathcal{W}$ (see Fig.~\ref{fig:tiles}).

    \textit{The (Wang) tiling problem} takes as input a finite set of tiles $\mathcal{W}$ and asks whether $\mathcal{W}$ is a tiling. 
\end{definition}

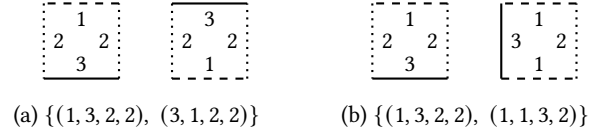
\begin{figure}
\begin{center}
\begin{tikzpicture}
\node at(.5,.8){1};\draw[thick, dashed](1,1)--(0,1);
\node at(.8,.5){2};\draw[thick, dotted](1,0)--(1,1);
\node at(.5,.2){3};\draw[thick](0,0)--(1,0);
\node at(.2,.5){2};\draw[thick, dotted](0,1)--(0,0);
\node at(.5,-.5){(a) $\{(1,3,2,2),$};
\end{tikzpicture}\!\!\!
\begin{tikzpicture}
\node at(.5,.8){3};\draw[thick](1,1)--(0,1);
\node at(.8,.5){2};\draw[thick, dotted](1,0)--(1,1);
\node at(.5,.2){1};\draw[thick, dashed](0,0)--(1,0);
\node at(.2,.5){2};\draw[thick, dotted](0,1)--(0,0);
\node at(.5,-.5){$(3,1,2,2)\}$};
\end{tikzpicture}
\ \qquad\ 
\begin{tikzpicture}
\node at(.5,.8){1};\draw[thick, dashed](1,1)--(0,1);
\node at(.8,.5){2};\draw[thick, dotted](1,0)--(1,1);
\node at(.5,.2){3};\draw[thick](0,0)--(1,0);
\node at(.2,.5){2};\draw[thick, dotted](0,1)--(0,0);
\node at(.5,-.5){(b) $\{(1,3,2,2),$};
\end{tikzpicture}\!\!\!
\begin{tikzpicture}
\node at(.5,.8){1};\draw[thick, dashed](1,1)--(0,1);
\node at(.8,.5){2};\draw[thick, dotted](1,0)--(1,1);
\node at(.5,.2){1};\draw[thick, dashed](0,0)--(1,0);
\node at(.2,.5){3};\draw[thick](0,1)--(0,0);
\node at(.5,-.5){$(1,1,3,2)\}$};
\end{tikzpicture}
\caption{Examples of (a) tiles that tile $\mathbb{N}^2$, and of (b) tiles that do not.}\label{fig:tiles}
\end{center}
\end{figure}

By assigning to every Turing machine $T$ a  set of tiles $\mathcal{W}^T$ and showing that a non-terminating run of $T$ yields a Wang tiling of $\mathbb{N}^2$ with $\mathcal{W}^T$, \cite{Berger} proved the following.

\begin{theorem}[\cite{Berger}]\label{t: tiling problem}
    The tiling problem is undecidable.
\end{theorem}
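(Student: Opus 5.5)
The plan is to reduce the complement of the halting problem to the tiling problem, in the manner sketched just before the statement (Berger's construction, as simplified by Robinson). Since the halting problem is undecidable and its complement is not even r.e., a computable map $T\mapsto\mathcal{W}^T$ with $\mathcal{W}^T$ tiling $\mathbb{N}^2$ iff $T$ does not halt on empty input yields undecidability at once.

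Because we tile the quadrant $\mathbb{N}^2$ rather than the whole plane, we can use the easier \emph{origin-constrained} route, with the space-time diagram of $T$ laid out row by row. Fix a single-tape machine $T$ with state set $Q$, tape alphabet $\Sigma$ (blank included) and transition function $\delta$. Colours are drawn from a finite set encoding elements of $\Sigma$, pairs $(q,a)\in Q\times\Sigma$, and head-movement signals $(q,\leftarrow)$, $(q,\rightarrow)$. Row $n$ is meant to represent the configuration after $n$ steps, with cell $m$ carrying tape square $m$. The tiles fall into three groups, with the last coordinate of each row constraint read off the tile's up/down colours.
\begin{itemize}
\item \emph{Copy tiles} pass a tape symbol $a$ unchanged from D to U, with blank L/R.
\item \emph{Action tiles} read $(q,a)$ on D and write the new symbol on U. If the head moves, they emit $(q',\leftarrow)$ or $(q',\rightarrow)$ on the L or R side, where $\delta(q,a)=(q',b,\text{dir})$.
\item \emph{Receiving tiles} take a signal $(q',\cdot)$ on the side and a symbol $c$ on D, and output $(q',c)$ on U.
\end{itemize}
No tile is provided for halting states, so a halting configuration cannot be continued upward.

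The initial configuration has to be forced on the bottom row. The definition of tiling imposes no boundary condition, so this is done with colours. A unique corner tile is the only tile whose down colour is a special marker $\ast$ and whose left colour is $\ast'$. Bottom-row tiles propagate $\ast$ on their down edges and a "row-0" signal horizontally. No other tile carries $\ast$, so nothing can sit below it, and row $0$ must consist of the start tile carrying $(q_0,\text{blank})$ followed by blanks.

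The hardest point is that a tiling of $\mathbb{N}^2$ need not have its origin at the special tile. I would handle it with the standard "seed" reduction. Tilings of $\mathbb{N}^2$ are insensitive to translation only when there is no anchor, so I would instead show that the tiling problem itself, as stated (any $\tau:\mathbb{N}^2\to\mathcal{W}$), is equivalent to the seeded version. To do this, colour the bottom and left boundaries with colours that only the bottom-row and left-column tiles can present outward. Tiles in the interior must have non-boundary colours on their down and left edges, and the boundary tiles can only continue leftward or downward with boundary colours, forcing a corner tile at $(0,0)$.

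If this bookkeeping fails, the fallback is to cite Berger's and Robinson's aperiodic construction directly, which proves undecidability without any seed. The paper itself attributes the result to \cite{Berger}, so the theorem may simply be invoked.

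Correctness then has two directions. A nonhalting run gives, by induction on $n$, a row-$n$ tiling that matches row $n-1$. Conversely, any tiling encodes, again by induction on rows, a sequence of consecutive configurations of $T$. Since row $0$ is forced and there is exactly one head per row (the matching constraints guarantee this), the rows are the legitimate configurations, and their infinitude means $T$ never halts.
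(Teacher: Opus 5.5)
There is a genuine gap, and it is at the step you yourself call ``the hardest point.'' The tiling problem in the paper has no boundary condition. $\tau$ only has to match colours across internal edges, so the down edges of row $0$ and the left edges of column $0$ are completely free. Your special boundary colours therefore show only that \emph{if} the corner tile occurs, it must sit at $(0,0)$. Nothing forces it to occur at all. Concretely, the blank copy tile (blank on its up and down edges, blank on its left and right edges) tiles $\mathbb{N}^2$ on its own. So $\mathcal{W}^T$ tiles $\mathbb{N}^2$ for every $T$, halting or not, and $T\mapsto\mathcal{W}^T$ is not a reduction.

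Your correctness argument breaks for the same reason at ``since row $0$ is forced and there is exactly one head per row.'' An unseeded tiling may start from a headless row, from a row with several heads, or from a head already in a looping state. Better colours cannot repair this. By K\"onig's lemma, a finite tile set tiles $\mathbb{N}^2$ iff it tiles every finite square iff it tiles $\mathbb{Z}^2$. Hence whether $\mathcal{W}$ tiles depends only on which finite squares it can tile. A seed can matter only if the tiles force seeds to appear inside every sufficiently large square. What your construction actually establishes is the undecidability of the origin-constrained (seeded) problem, which is the comparatively easy result that predates Berger.

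Passing from the seeded to the unseeded problem is precisely Berger's contribution. It needs an aperiodic, hierarchical tile set (Berger's original, or Robinson's simplification) that forces every tiling to contain arbitrarily large nested squares. In each of these squares the computation of $T$ from the initial configuration is restarted. A halting $T$ then makes all sufficiently large squares untileable, while a non-halting $T$ lets every square be completed. Your fallback of simply invoking \cite{Berger} is what the paper does: it gives no proof of its own, only a one-line description of Berger's reduction.

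The unseeded version is also genuinely what the paper needs. The tiling extracted in Lemma~\ref{l: backward2} has no prescribed tile anywhere; there is not even a point $\langle 0,0\rangle$. So a seeded result would not plug into Theorem~\ref{t: main} without modifying $\phi_{\mathcal{W}}$.
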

We achieve undecidability of BI by reducing the tiling problem to BI's provability problem. In brief, we computably associate to each finite set of tiles $\mathcal{W}$ a formula $\phi^{}_{\mathcal{W}}$, and show that $\mathcal{W}$ tiles $\mathbb{N}^2$ iff $\mathsf{BI}\nvDash \phi^{}_{\mathcal{W}}$. Thus, cf. \eqref{eq:BIUnd}, a Turing machine $T$ terminates iff $\vdash \top \DC \phi^{}_{\mathcal{W}^T}$. 

En route, we prove two key lemmas, which serve to give a lower bound and an upper bound, respectively, for an interval of undecidable theories. We state them here to show how they imply undecidability of BI, but postpone their proofs to the subsequent section.

\begin{lemma}\label{l: backward}
    Let $\mathcal{W}$ be a finite set of tiles, and $\mathbf{A}$ a disjointive distributive residuated lattice. If $\mathbf{A}\nvDash \phi^{}_\mathcal{W}$, then $\mathcal{W}$ tiles $\mathbb{N}^2$.
\end{lemma}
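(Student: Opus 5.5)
The proof will be semantic, and I expect it to go through the prime-filter frame of $\mathbf{A}$ developed in Section~\ref{sec:duality}. Suppose $\mathbf{A}\nvDash\phi^{}_\mathcal{W}$. Then there is a valuation $h$ with $\top\not\leq h(\phi^{}_\mathcal{W})$. By the prime filter theorem for bounded distributive lattices, there is a prime filter $F_0$ of $\mathbf{A}$ with $h(\phi^{}_\mathcal{W})\notin F_0$. I will then move to the ternary relational structure on prime filters, setting $R(F,G,H)$ iff $F\cdot G\subseteq H$ (elementwise products). The key facts I plan to use are the following.
\begin{itemize}
\item $x\backslash y\in G$ iff for all $F,H$ with $R(F,G,H)$, $x\in F$ implies $y\in H$; and dually for $/$. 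This is the usual prime-filter extension argument for residuals.
\item Disjointness $x\wedge\neg x=\bot$ guarantees that no prime filter contains both $x$ and $\neg x$. Since no residual to $\wedge$ is available, this is the only use of $\neg$.
\end{itemize}
So failure of a disjunction or of a residual at a prime filter produces prime filters witnessing the failure of each disjunct or each residuated implication.

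Next I unfold the expected shape of $\phi^{}_\mathcal{W}$. I expect it to be a disjunction of negated or residuated "constraint" formulas, so that its failure at $F_0$ asserts a conjunction of properties of $F_0$. These properties should be built from the following ingredients.
\begin{itemize}
\item A tile variable $p_t$ for each $t\in\mathcal{W}$.
\item Two auxiliary variables marking "horizontal" and "vertical" steps.
\item A "universal modality" assembled from nested divisions, for instance $\top\backslash(\cdot)$ iterated. Its role is to make the constraints hold at every filter reachable from $F_0$ via $R$.
\end{itemize}
Concretely, the constraints should say that every reachable filter has three properties:
\begin{enumerate}[(i)]
\item it contains exactly one $p_t$, with uniqueness enforced through $\neg$;
\item it has an $R$-reachable horizontal successor and an $R$-reachable vertical successor, existence coming from the failure of a division;
\item whenever it contains $p_t$, every horizontal successor contains only tiles $s$ with $\text{L}(s)=\text{R}(t)$, and every vertical successor contains only tiles $s$ with $\text{D}(s)=\text{U}(t)$.
\end{enumerate}

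Using (i)--(iii), I then build prime filters by induction. I choose horizontal successors to get $F_{m,0}$, and vertical successors from each $F_{m,0}$ to get $F_{m,n}$. I define $\tau(m,n)$ as the unique $t$ with $p_t\in F_{m,n}$. Vertical matching is then immediate from (iii).

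The main obstacle is horizontal matching at heights $n>0$. The filter $F_{m+1,n}$ lies on a different vertical column than a horizontal successor of $F_{m,n}$, and in a general semigroup frame there is no reason for the grid to commute. My first attempt is to avoid commutation entirely by encoding colour information along columns rather than demanding identical points. I would include constraints stating that the vertical successor relation preserves "column data": variables $c_{k}$ recording the right colour at each height modulo the matching requirement. Then any horizontal successor of a column $\{F_{m,n}\}_n$ can be re-chosen so as to be compatible at all heights simultaneously. If that fails, the fallback is to exploit associativity of $\cdot$: interpret a step up then right as $(x\cdot v)\cdot h$ and a step right then up as $(x\cdot h)\cdot v$, and force commutation up to the tile predicates. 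For this I would use $R$-compositions and the semigroup law $R^2$ associativity in the frame, so that $F_{m+1,n}$ can be taken as a vertical successor path from $F_{m+1,0}$ that is also a horizontal successor of $F_{m,n}$. Finally I check that $\tau:\mathbb{N}^2\to\mathcal{W}$ satisfies both matching conditions, so $\mathcal{W}$ tiles $\mathbb{N}^2$.
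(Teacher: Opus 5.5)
Your opening move, passing to the prime-filter frame (Lemma~\ref{l: dual}) and arguing there, is how the paper reduces the statement to Lemma~\ref{l: backward2}. After that, though, you design your own encoding instead of analysing the formula in the statement. The actual formula is $\phi^{}_\mathcal{W}=[\neg(\textsc{x}'\backslash\textsc{oe}^c)\land\textsc{c}\backslash\alpha\land\textsc{c}\backslash\alpha/\textsc{c}\land\alpha/\textsc{c}]\backslash p$. It has no atoms $p_t$, no column-data atoms and no iterated $\top\backslash$. What it does have:
\begin{itemize}
\item colour atoms $u_i,d_i,l_i,r_i$ and parity atoms $\textsc{ee},\textsc{oe},\textsc{oo},\textsc{eo}$;
\item the gadget $\textsc{x}'=\textsc{x}\land\textsc{c}\land\textsc{c}\backslash\textsc{c}$ (and similarly $\textsc{y}'$), which makes any product of $x_i$'s or of $y_i$'s satisfy $\textsc{c}$. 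This is what lets $\textsc{c}\backslash\alpha$, $\textsc{c}\backslash\alpha/\textsc{c}$ and $\alpha/\textsc{c}$ at $s$ transfer $\alpha$ to every point of $x\circ s$, $x\circ s\circ y$ and $s\circ y$.
\end{itemize}
Your (i)--(iii) are therefore not established, and two of them are not what $\phi^{}_\mathcal{W}$ yields. Successor existence is not uniform: $\alpha_{\textsc{ee}}$ and $\alpha_{\textsc{oo}}$ only give an $\textsc{x}'$-successor, and $\alpha_{\textsc{oe}}$ and $\alpha_{\textsc{eo}}$ only a $\textsc{y}'$-successor. The successor constraint is $\textsc{x}\backslash[\textsc{ee}\lor(\textsc{oe}\land\bigvee t')]$, not ``every successor matches''. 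Its $\textsc{ee}$ disjunct is forced by idempotence of $\cup$ in Lemma~\ref{l: forward2}, and it must be excluded at each grid point by playing the parity labels of neighbouring grid points against each other, as in cases (a)--(d) of the paper.

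More seriously, the step you yourself flag as the main obstacle is left open. The column-data idea is not made precise and is not part of $\phi^{}_\mathcal{W}$. Re-choosing a horizontal-successor column that is compatible at all heights at once is just the commutation problem again. Your fallback compares $(x\cdot v)\cdot h$ with $(x\cdot h)\cdot v$, which puts both steps on the right. Associativity does not relate these; that would need commutativity, which is unavailable for general $\mathbf{A}$.

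Even with the sides corrected, building column $m$ first and then looking for $F_{m+1,n}$ fails. Associativity only converts a two-step path into another two-step path with the \emph{same endpoints}: $w\in(h\circ a)\circ v$ gives some $b\in a\circ v$ with $w\in h\circ b$. So a vertical successor of $F_{m+1,0}$ lies over \emph{some} vertical successor $b$ of $F_{m,0}$, not necessarily your $F_{m,1}$. Associativity never produces a missing corner.

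The missing idea is the paper's:
\begin{itemize}
\item Make horizontal steps \emph{left} multiplications by $x_i\Vdash\textsc{x}'$ and vertical steps \emph{right} multiplications by $y_i\Vdash\textsc{y}'$.
\item Use the negated divisions only to build a diagonal staircase $\langle 1,0\rangle\in x_1\circ s$, $\langle 1,1\rangle\in\langle 1,0\rangle\circ y_1$, $\langle 2,1\rangle\in x_2\circ\langle 1,1\rangle,\dots$. Each new point satisfies $\alpha$ but fails, e.g., $\textsc{oe}^c$, hence satisfies the correct $\alpha_{\textsc{oe}}$.
\item Fill in outward from the staircase using $(x_{m+1}\circ\langle m,n\rangle)\circ y_{n+1}=x_{m+1}\circ(\langle m,n\rangle\circ y_{n+1})$, where both corners already exist.
\end{itemize}
Every grid point then satisfies $\langle m+1,n\rangle\in x_{m+1}\circ\langle m,n\rangle$ and $\langle m,n+1\rangle\in\langle m,n\rangle\circ y_{n+1}$. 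Horizontal matching at every height then follows from the successor constraint together with the parity argument.
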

\begin{lemma}\label{l: forward}
    Let $\mathcal{W}$ be a finite set of tiles. If $\mathcal{W}$ tiles $\mathbb{N}^2$, then $\mathcal{P}_{\omega}(\mathbb{N})^+\nvDash \phi^{}_\mathcal{W}$.
\end{lemma}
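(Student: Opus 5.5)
The plan is to read off from a tiling $\tau:\mathbb{N}^2\to\mathcal{W}$ a concrete valuation $h$ into $\mathcal{P}_{\omega}(\mathbb{N})^+$, and then check that $\top\not\subseteq h(\phi^{}_\mathcal{W})$, i.e.\ that some finite set $x_0\subseteq\mathbb{N}$ lies outside $h(\phi^{}_\mathcal{W})$. Since $\mathcal{P}_{\omega}(\mathbb{N})^+$ is the complex algebra of the commutative monoid $(\mathcal{P}_{\omega}(\mathbb{N}),\cup,\varnothing)$ with Boolean negation, the clauses are simple:
\begin{itemize}
\item $x\in h(\neg\alpha)$ iff $x\notin h(\alpha)$;
\item $x\in h(\alpha\mimp\beta)$ iff for every $z\in h(\alpha)$ we have $x\cup z\in h(\beta)$.
\end{itemize}
So $\mimp$ acts as a box over ``extend the current resource by an $\alpha$-resource'', and $\neg(\alpha\mimp\neg\beta)$ acts as the dual diamond. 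The verification then amounts to a modal-style model-checking argument in a frame whose points are finite sets and whose accessibility is union.

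\textbf{Encoding the grid.} I will encode $\mathbb{N}^2$ injectively and monotonically into $\mathcal{P}_{\omega}(\mathbb{N})$ by
\[
g(m,n)=\{2i: i<m\}\cup\{2j+1: j<n\}.
\]
A horizontal step $(m,n)\mapsto(m+1,n)$ is then union with the singleton $\{2m\}$, and a vertical step is union with $\{2n+1\}$. The root is $g(0,0)=\varnothing$, which I take as the candidate refuting point $x_0$. The formula $\phi^{}_\mathcal{W}$ presumably has the shape $\neg(\Theta\land\ldots)$, or is a disjunction of ``defects''. Here $\Theta$ combines the following requirements, each guarded by iterated $\mimp$-boxes:
\begin{itemize}
\item each reachable point carries exactly one tile variable $p_t$;
\item each reachable point has a right successor and an up successor, reached via auxiliary ``step'' variables;
\item the tiles at these successors match in colour $\mathrm{R}/\mathrm{L}$ and $\mathrm{U}/\mathrm{D}$;
\item the two successor orders commute, so that the ``grid'' closes up.
\end{itemize}

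\textbf{Valuation.} Correspondingly, I set
\[
h(p_t)=\{g(m,n):\tau(m,n)=t\}.
\]
The step variables are interpreted as sets of singletons, e.g.\ $\{\{2i\}: i\in\mathbb{N}\}$ for horizontal moves and $\{\{2j+1\}: j\in\mathbb{N}\}$ for vertical ones. Any parity or counter variables needed to distinguish the rows and columns are read off from $m$ and $n$, for instance $m \bmod 3$ and $n \bmod 3$, so that the formula can recognise a genuine successor. I then verify, by induction along the grid, that every $g(m,n)$ satisfies each local constraint:
\begin{itemize}
\item uniqueness of the tile, because $g$ is injective;
\item existence of successors, witnessed by $g(m+1,n)$ and $g(m,n+1)$;
\item colour matching, because $\tau$ is a tiling;
\item commutation, because $g(m+1,n+1)=g(m,n)\cup\{2m\}\cup\{2n+1\}$ regardless of the order of the two unions.
\end{itemize}
It follows that $\varnothing\in h(\Theta)$, and hence $\varnothing\notin h(\phi^{}_\mathcal{W})$.

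\textbf{Main obstacle.} The hard step is controlling the ``junk'' points. The box $\alpha\mimp\beta$ quantifies over all $z\in h(\alpha)$, and $x\cup z$ may be a finite set not of the form $g(m,n)$, for example $g(m,n)\cup\{2m+4\}$ or a set with gaps. The invariant-propagating subformulas must therefore either hold vacuously at such points or never reach them. I will first try to make the step variables as restrictive as possible, denoting exactly the singletons that advance the grid from the current point. If that cannot be expressed pointwise, I will instead verify the constraints in the form ``if the point carries some tile variable, then \dots''. Since the tile variables are true only on $g[\mathbb{N}^2]$, every such constraint is then trivially satisfied at junk points. Checking that each conjunct of $\phi^{}_\mathcal{W}$ is guarded in this way, once its exact definition is given, is where the real case analysis will lie.
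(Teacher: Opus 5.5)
There is a genuine gap, and it sits exactly where you located the ``main obstacle''. Making the tile letters true only on $g[\mathbb{N}^2]$ cannot work for the actual formula, and neither of your fallbacks rescues it.

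First, recall the shape of the formula: $\phi^{}_\mathcal{W}=[\Theta]\backslash p$ with $\Theta=\neg(\textsc{x}'\backslash\textsc{oe}^c)\land\textsc{c}\backslash\alpha\land\textsc{c}\backslash\alpha/\textsc{c}\land\alpha/\textsc{c}$. It is not $\neg(\Theta\land\cdots)$, and you must also set $h(p)=\varnothing$. So $\varnothing\in h(\Theta)$ forces every element of $h(\textsc{c})$ to satisfy $\alpha$, and $\alpha$ itself asserts that some tile formula holds. Nothing in $\phi^{}_\mathcal{W}$ is guarded by ``if a tile holds'', so your second fallback is unavailable.

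Your own horizontal witnesses $\{2m\}$ must satisfy $\textsc{x}'=\textsc{x}\land\textsc{c}\land\textsc{c}\backslash\textsc{c}$. Hence they lie in $h(\textsc{c})$, and $h(\textsc{c})$ is closed under union with them and with the vertical witnesses $\{2n+1\}$. So $\{2\}$, $\{1,4\}$, and so on must all satisfy $\alpha$, yet none of them has the form $g(m,n)$, so none carries a tile under your valuation.

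Likewise, the box $\textsc{x}\backslash[\textsc{ee}\lor(\textsc{oe}\land\bigvee t')]$ in $\alpha_{\textsc{ee}}$ at $g(m,n)$ ranges over all of $h(\textsc{x})$. The valuation is global, so you cannot shrink $h(\textsc{x})$ to ``the singleton that advances the current point'', which defeats your first fallback. That box therefore reaches $g(m,n)\cup\{2m+4\}$, where its consequent is a non-vacuous demand on parity letters and tiles. Junk points are forced into the constraints and cannot be left untiled.

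The missing idea is to eliminate junk altogether rather than control it. Read every finite set $X$ as the grid point $(|X\cap 2\mathbb{N}|,|X\cap(2\mathbb{N}+1)|)$; this extends your $g$, since $g(m,n)$ has exactly $m$ even and $n$ odd elements. Then define the valuation as follows:
\begin{itemize}
\item $\textsc{ee},\textsc{oe},\textsc{oo},\textsc{eo}$ by the parities of these two cardinalities;
\item $u_i,d_i,l_i,r_i$ by the colours of $\tau$ at that grid point;
\item $\textsc{c}$ as the nonempty sets, $\textsc{x}$ and $\textsc{y}$ as the even and odd singletons, and $p$ as $\varnothing$.
\end{itemize}
Union with an even singleton now has two cases. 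Either it increments the first coordinate, landing on an $\textsc{oe}$-point whose tile matches because $\tau$ is a tiling. Or the element is already present and the point is fixed, which is exactly what the extra $\textsc{ee}$ disjunct inside the boxes of $\alpha_{\textsc{ee}}$ absorbs.

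Every nonempty $X$ then satisfies $\alpha$, and $\{0\}$ witnesses $\varnothing\Vdash\neg(\textsc{x}'\backslash\textsc{oe}^c)$. Hence $\varnothing\in h(\Theta)\setminus h(p)$, which refutes $\phi^{}_\mathcal{W}$.

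Note also that your plan checks conjuncts the formula does not contain (tile letters $p_t$, counters mod $3$, a ``commutation'' requirement). Commutation is not needed in this direction, since it is derived from associativity in Lemma~\ref{l: backward2}. Meanwhile you defer all verification of the conjuncts the formula does contain.
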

Combined, the lemmas lead to our main theorem.

\begin{theorem}\label{t: main}
    Every class of disjointive distributive residuated lattices that contains $\mathcal{P}_{\omega}(\mathbb{N})^+$ has an undecidable equational theory.
\end{theorem}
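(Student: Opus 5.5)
The theorem follows from Lemmas~\ref{l: backward} and~\ref{l: forward} by a sandwich argument. Fix a class $\mathsf{K}$ of disjointive distributive residuated lattices with $\mathcal{P}_{\omega}(\mathbb{N})^+\in\mathsf{K}$. We claim that for every finite set of tiles $\mathcal{W}$,
\[
\mathcal{W}\text{ tiles }\mathbb{N}^2 \quad\text{iff}\quad \mathsf{K}\nvDash \phi^{}_{\mathcal{W}},
\]
where $\mathsf{K}\nvDash\phi^{}_{\mathcal{W}}$ means that some member of $\mathsf{K}$ refutes the equation $\top\leq\phi^{}_{\mathcal{W}}$, equivalently $\top\approx\top\land\phi^{}_{\mathcal{W}}$.

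For the forward direction, suppose $\mathcal{W}$ tiles $\mathbb{N}^2$. Lemma~\ref{l: forward} gives $\mathcal{P}_{\omega}(\mathbb{N})^+\nvDash\phi^{}_{\mathcal{W}}$. Since $\mathcal{P}_{\omega}(\mathbb{N})^+\in\mathsf{K}$, it follows that $\mathsf{K}\nvDash\phi^{}_{\mathcal{W}}$.

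For the converse, suppose some $\mathbf{A}\in\mathsf{K}$ refutes $\phi^{}_{\mathcal{W}}$. Then $\mathbf{A}$ is a disjointive distributive residuated lattice, so Lemma~\ref{l: backward} shows that $\mathcal{W}$ tiles $\mathbb{N}^2$.

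Next, $\mathcal{W}\mapsto \phi^{}_{\mathcal{W}}$ is a computable map into equations in the signature $\{\land,\lor,\top,\bot,\neg,\backslash,/\}$. One point needs care here. The formula is written in the language of disjointive distributive residuated lattices, so it must be interpreted identically in $\mathcal{P}_{\omega}(\mathbb{N})^+$. Specifically, $\neg$ must be read as $x\to\varnothing$, which is Boolean complement and hence disjointive, and $\backslash=/$ must be read as $\sepimp$ with its arguments in the appropriate order. This is exactly the conflation of reducts the paper already makes. Footnote~\ref{fn: language} also shows that the bounds are definable, so nothing depends on whether $\top,\bot$ are primitive.

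Finally, suppose the equational theory of $\mathsf{K}$ were decidable. Then, given $\mathcal{W}$, we could compute $\phi^{}_{\mathcal{W}}$ and decide whether $\top\leq\phi^{}_{\mathcal{W}}$ holds throughout $\mathsf{K}$. By the equivalence above, this would decide the tiling problem, contradicting Theorem~\ref{t: tiling problem}. So the equational theory of $\mathsf{K}$ is undecidable.

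None of this is hard once the lemmas are available. The only step needing attention is the compatibility of the interpretation of $\phi^{}_{\mathcal{W}}$ across the signatures just discussed. All the substantive work lies in the two lemmas.
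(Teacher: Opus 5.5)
Your proposal is correct and follows the paper's own proof. Lemma~\ref{l: backward} gives one direction of the equivalence ``$\mathcal{W}$ tiles $\mathbb{N}^2$ iff some member of the class refutes $\phi^{}_{\mathcal{W}}$'', and Lemma~\ref{l: forward}, together with membership of $\mathcal{P}_{\omega}(\mathbb{N})^+$, gives the other, so the tiling problem reduces computably to the equational theory. Your remarks on the computability of $\mathcal{W}\mapsto\phi^{}_{\mathcal{W}}$ and on how $\neg,\backslash,/$ are read in $\mathcal{P}_{\omega}(\mathbb{N})^+$ just spell out what the paper leaves implicit.
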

\begin{proof}
    Let $\mathcal{K}$ be the class and let $\mathcal{W}$ be a finite set of Wang tiles. By Lemma~\ref{l: backward}, if $\mathcal{K}\nvDash \phi^{}_\mathcal{W}$, then $\mathcal{W}$ tiles $\mathbb{N}^2$. Conversely, by Lemma~\ref{l: forward}, if $\mathcal{W}$ tiles $\mathbb{N}^2$, then $\mathcal{P}_{\omega}(\mathbb{N})^+\nvDash \phi^{}_\mathcal{W}$, so $\mathcal{K}\nvDash \phi^{}_\mathcal{W}$. Thus, $\mathcal{W}$ tiles $\mathbb{N}^2$ iff $\mathcal{K}\nvDash \phi^{}_\mathcal{W}$ (that is, iff $\mathcal{K}\nvDash \top \leq \phi^{}_\mathcal{W}$), whence the tiling problem reduces to the equational decision problem for $\mathcal{K}$.
\end{proof}

\begin{theorem}
    BI is undecidable.
\end{theorem}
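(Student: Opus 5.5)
The plan is to derive the statement directly from Theorem~\ref{t: main}, applied to the class $\mathsf{BI}$ of all BI-algebras, and then to transfer undecidability of the equational theory to provability in the LBI calculus via \eqref{eq:BIUnd}.

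\textbf{Step 1: $\mathsf{BI}$ is an admissible class.} First check that $\mathsf{BI}$, or rather the class of $\{\land,\lor,\top,\bot,\neg,\ast,\sepimp\}$-reducts of BI-algebras, is a class of disjointive distributive residuated lattices. Here $\neg x := x\to\bot$, and the residuals are $\backslash = \sepimp$ and $x/y := y\sepimp x$, using commutativity. Three facts are needed:
\begin{itemize}
\item Every Heyting algebra is a bounded distributive lattice.
\item $(A,\ast)$ is a semigroup, and the two residuation conditions both reduce to the single BI residuation law, by commutativity.
\item $x\land(x\to\bot)\le\bot$ holds by modus ponens in Heyting algebras, giving $x\land\neg x=\bot$.
\end{itemize}
Next, Remark~\ref{rm:fincofin} already notes that $\mathcal{P}_{\omega}(\mathbb{N})^+$ is a BBI-algebra, hence a BI-algebra, so it belongs to the class.

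\textbf{Step 2: apply Theorem~\ref{t: main}.} By Theorem~\ref{t: main}, the equational theory of $\mathsf{BI}$ (in the reduct language) is undecidable. More precisely, the proof of that theorem gives a computable reduction
\[
\mathcal{W}\ \text{tiles}\ \mathbb{N}^2 \iff \mathsf{BI}\nvDash \phi_{\mathcal{W}}.
\]
The formula $\phi_{\mathcal{W}}$ is in the language $\{\land,\lor,\neg,\ast,\backslash,/\}$ (possibly with bounds), so I translate it into a BI-formula by replacing $\neg\alpha$ with $\alpha\to\bot$, $\alpha\backslash\beta$ with $\alpha\sepimp\beta$, and $\beta/\alpha$ with $\alpha\sepimp\beta$. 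This translation is computable and preserves interpretation in every BI-algebra.

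\textbf{Step 3: pass to provability.} Invoking \eqref{eq:BIUnd}, we get that $\mathcal{W}$ tiles $\mathbb{N}^2$ iff $\nvdash\top\DC\phi_{\mathcal{W}}$. Hence a decision procedure for LBI-provability would decide the tiling problem, contradicting Theorem~\ref{t: tiling problem}.

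The only point requiring care is the identification of reducts in Step 2: the defined operations must satisfy the equations exactly as the primitive ones would. This is routine, so there is no real obstacle once Lemmas~\ref{l: backward} and~\ref{l: forward} are granted. The substantive work lives in those lemmas, which are assumed here.
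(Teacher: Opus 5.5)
Your proposal is correct and is essentially the paper's proof. You check that BI-algebras, read with $\neg x := x\to\bot$ and both divisions as $\sepimp$, are disjointive distributive residuated lattices containing $\mathcal{P}_{\omega}(\mathbb{N})^+$, then apply Theorem~\ref{t: main} and transfer to LBI-provability via \eqref{eq:BIUnd}; the only slip is harmless: $\phi^{}_{\mathcal{W}}$ uses no $\ast$ at all, only $\{\land,\lor,\neg,\backslash,/\}$.
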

\begin{proof}
    $\mathcal{P}_{\omega}(\mathbb{N})^+$ is a BI-algebra and BI-algebras are, in particular, disjointive distributive residuated lattices. Consequently, Theorem~\ref{t: main} applies. Specifically, cf. \eqref{eq:BIUnd}, it is undecidable whether, given a formula $\varphi$, the sequent $\top\DC \varphi$ is derivable in the BI calculus.
\end{proof}
As we will see, the tiling formulas $\phi^{}_\mathcal{W}$ will actually use only the language $\{\land, \lor, \neg, \backslash, \slash\}$, which corresponds to $\{\land, \lor, \neg, \sepimp\}$ in the commutative setting. As a result, undecidability holds already in this fragment, as we discussed in the paragraph leading to footnote~\ref{fn: language}.

Further, as GBI-algebras are disjointive distributive residuated lattices, and $\mathcal{P}_{\omega}(\mathbb{N})^+$---as a BI-algebra---is a GBI-algebra, Theorem~\ref{t: main} also entails undecidability of the equational theory of $\mathsf{GBI}$.
\begin{theorem}
    GBI is undecidable.
\end{theorem}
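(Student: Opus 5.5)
The plan is to apply Theorem~\ref{t: main} to the class $\mathsf{GBI}$ of all GBI-algebras (or rather their $\{\land,\lor,\top,\bot,\neg,\cdot,\backslash,/\}$-reducts), exactly as for BI. This requires two checks: that every GBI-algebra is a disjointive distributive residuated lattice, and that $\mathcal{P}_{\omega}(\mathbb{N})^+$ belongs to $\mathsf{GBI}$.

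For the first check, take a GBI-algebra $\mathbf{A}$ and define $\neg x := x\to\bot$. The Heyting reduct is a bounded lattice, and it is distributive because every Heyting algebra is. The monoid $(A,\cdot,1)$ is in particular a semigroup, and $\backslash,/$ residuate $\cdot$ by definition. Disjointiveness follows from residuation of $\land$ by $\to$: since $\neg x\le x\to\bot$, we get $x\land\neg x\le\bot$, hence $x\land\neg x=\bot$.

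For the second check, Remark~\ref{rm:fincofin} shows that $\mathcal{P}_{\omega}(\mathbb{N})^+$ is a BI-algebra, in fact a BBI-algebra. Setting $X\backslash Y = Y/X := X\sepimp Y$ makes it a GBI-algebra, because in the commutative case both division conditions collapse to the single residuation law $x\ast y\le z$ iff $y\le x\sepimp z$.

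Theorem~\ref{t: main} then says the equational theory of $\mathsf{GBI}$ is undecidable, restricted to the reduct language. Since $\phi_\mathcal{W}$ uses only $\{\land,\lor,\neg,\backslash,/\}$ with $\neg$ read as $\cdot\to\bot$, it is already a GBI-term. So undecidability of $\mathsf{GBI}\vDash\top\le\phi_\mathcal{W}$ is undecidability of the full GBI theory.

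The only point needing any care is this translation between languages. One must confirm that the reduction in Theorem~\ref{t: main} uses formulas interpreted identically in a GBI-algebra and in its reduct, which holds because the reduct's $\neg$ is defined from $\to$ and $\bot$.

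Finally, to pass from the algebraic statement to provability, I would invoke the algebraic completeness of the GBI calculus with respect to $\mathsf{GBI}$ (cf. \cite{GalatosJ}). This is the analogue of \eqref{eq:BIUnd}, and it transfers undecidability from the equational theory to the logic GBI.
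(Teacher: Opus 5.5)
Your proposal is correct and follows the paper's own route: the paper likewise derives the result directly from Theorem~\ref{t: main}, noting that GBI-algebras (via their reducts, with $\neg x := x\to\bot$) are disjointive distributive residuated lattices and that $\mathcal{P}_{\omega}(\mathbb{N})^+$, being a BI-algebra, is a GBI-algebra. Your remarks on translating between the reduct language and the full GBI language, and on passing to the GBI calculus via algebraic completeness, are routine details that the paper leaves implicit.
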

Similarly, since  $\mathcal{P}_{\omega}(\mathbb{N})^+$ is a BBI-algebra, we attain a proof that BBI (Boolean Bunched Implication Logic) is undecidable, a result earlier achieved in \cite{KuruczNSS95:jolli, BrotherstonK10:lics, Larchey-WendlingG10:lics, Knudstorp25}. 
\begin{theorem}
    BBI is undecidable.
\end{theorem}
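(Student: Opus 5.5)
Since $\mathcal{P}_{\omega}(\mathbb{N})^+$ is by Remark~\ref{rm:fincofin} a BBI-algebra, I will apply Theorem~\ref{t: main} to the class of all BBI-algebras, viewed through the appropriate reducts. Two things need checking first. Every BBI-algebra must be (the reduct of) a disjointive distributive residuated lattice, and $\mathcal{P}_{\omega}(\mathbb{N})^+$ must belong to the class. The second point is exactly Remark~\ref{rm:fincofin}. For the first, take a BBI-algebra $\mathbf{A}$. Its Boolean reduct is a bounded distributive lattice. The multiplication $\ast$ is associative, so $(A,\ast)$ is a semigroup. Residuation holds with $\backslash$ and $/$ both interpreted as $\sepimp$ (with arguments swapped for $/$), which uses commutativity of $\ast$. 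Finally, Boolean complement $\neg x := x\to\bot$ satisfies $x\land\neg x=\bot$, so it is disjointive. Theorem~\ref{t: main} then shows that the equational theory of $\mathsf{BBI}$ is undecidable, already in the $\{\land,\lor,\neg,\backslash,/\}$-fragment, which in the commutative setting reads $\{\land,\lor,\neg,\sepimp\}$.

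It remains to pass from the equational theory to provability in BBI. I will use the standard fact that BBI, the extension of the BI calculus in which the additive part is classical, is sound and complete with respect to BBI-algebras. Concretely, $\vdash_{\text{BBI}}\top\DC\varphi$ iff $\mathsf{BBI}\vDash\top\leq\varphi$. This is the Boolean analogue of the equivalence \eqref{eq:BIUnd}, and it follows from the same Lindenbaum--Tarski construction once the classical axiom $\neg\neg p\to p$ is added. Combining this with the reduction gives that $\mathcal{W}$ tiles $\mathbb{N}^2$ iff $\nvdash_{\text{BBI}}\top\DC\phi_{\mathcal{W}}$, and Theorem~\ref{t: tiling problem} then yields undecidability.

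The only potentially delicate step is this algebraic completeness for BBI. The rest is direct verification of the definitions. Since the completeness is standard in the literature on BBI, I will cite it rather than reprove it. If a self-contained argument is wanted, I will observe that a Hilbert-style presentation of BBI gives a Lindenbaum algebra of formulas modulo provable equivalence, and that this algebra is a BBI-algebra. Any unprovable $\top\DC\varphi$ is then refuted there by the canonical valuation.
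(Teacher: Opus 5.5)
Your proposal is correct and takes the same route as the paper, which likewise obtains the result by applying Theorem~\ref{t: main} to the class of BBI-algebras: these are (reducts of) disjointive distributive residuated lattices, and $\mathcal{P}_{\omega}(\mathbb{N})^+$ is one of them. Your explicit check of the reduct conditions and your appeal to algebraic completeness of BBI, to pass from the equational theory to provability, only spell out steps the paper leaves implicit.
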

Additionally, arguments of \cite{Knudstorp25} readily transfer to our context, leading to undecidability of many variants of BBI, notably within the $\{\ast\}$-free fragment. 

Lastly, there is one final consequence of Theorem~\ref{t: main} we wish to highlight. \cite{Kozak09} showed that the variety of distributive residuated lattices (which include a multiplicative unit $1$) has a \textit{decidable} equational theory (hence also the class of subreducts without unit). In contrast, from Theorem~\ref{t: main}, it follows that the variety of disjointive distributive residuated lattices (with or without a multiplicative unit $1$) has an \textit{undecidable} equational theory. 
\begin{theorem}
    The variety of disjointive distributive residuated lattices has an undecidable equational theory.
\end{theorem}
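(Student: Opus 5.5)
The plan is to derive this as an immediate instance of Theorem~\ref{t: main}, applied to the class $\mathcal{K}$ of all disjointive distributive residuated lattices. The only hypothesis to check is that $\mathcal{K}$ contains $\mathcal{P}_{\omega}(\mathbb{N})^+$ (more precisely, its $\{\land,\lor,\top,\bot,\neg,\cdot,\backslash,/\}$-reduct). I would verify this membership directly, as follows.

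\begin{itemize}
\item The reduct $(\mathcal{P}(\mathcal{P}_{\omega}(\mathbb{N})),\cap,\cup,\mathcal{P}_{\omega}(\mathbb{N}),\varnothing)$ is a powerset Boolean algebra, hence a bounded distributive lattice.
\item Pointwise union $\ast$ is associative, since $\cup$ on finite sets is associative, so $(A,\ast)$ is a semigroup.
\item Because $\ast$ is commutative, setting $X\backslash Y = Y/X := X\sepimp Y$ gives both divisions, and residuation holds by Remark~\ref{rm:fincofin}.
\item Define $\neg X := X\to\varnothing = X^c$. Then $X\cap X^c=\varnothing=\bot$, so the disjointive equation holds.
\end{itemize}

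Hence $\mathcal{P}_{\omega}(\mathbb{N})^+\in\mathcal{K}$, and Theorem~\ref{t: main} yields that the equational theory of $\mathcal{K}$ is undecidable.

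It remains to confirm that $\mathcal{K}$ is a variety. Lattice identities, semigroup associativity, and the disjointive law $x\land\neg x=\bot$ are equations, and residuation is equationally expressible in the standard way (cf.\ \cite{GalatosJKO07}). So $\mathcal{K}$ is a variety, and the statement concerns its equational theory.

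For the version with a multiplicative unit $1$, I would run the same argument with $1$ interpreted as $\{\varnothing\}$, which is the identity for pointwise union.

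There is no real obstacle here. The substantive content lies in Lemmas~\ref{l: backward} and~\ref{l: forward}, which are assumed, and the only point needing care is the membership check above.
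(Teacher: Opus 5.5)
Your proposal is correct and matches the paper's argument. The paper also obtains this theorem directly from Theorem~\ref{t: main}, applied to the class of all disjointive distributive residuated lattices, using its earlier observation that $\mathcal{P}_{\omega}(\mathbb{N})^+$ (suitably reduced) belongs to that class. Your explicit membership check, and your remark that the class is equationally definable, spell out details the paper takes for granted.
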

That is, the mere presence of an operation $\neg$ with $x\land \neg x=\bot$ is enough to cross from the decidable to the undecidable.

\section{Tiling proofs}\label{sec:Undecidability}
We are left to prove Lemma~\ref{l: backward} and~\ref{l: forward}. To this end, we work with dual, relational structures, which we call disjointive associative frames and define as follows. For readers familiar with relational semantics for BI, we mention that these generalize the upwards and downwards closed monoidal frames (UDMF), which form a complete semantics for BI \cite{DochPym2019}.

\begin{definition}[Frames and models]
    A \textit{(disjointive associative) frame} is a triple $\mathfrak{F}=(S,\circ, N)$ where $\circ: S^2\to \mathcal P(S)$ is associative and $N:\mathcal P(S)\to \mathcal  P(S)$ is \emph{disjointive}, i.e., for all $x,y,z\in S$ and $X\subseteq S,$
\[
    (x\circ y)\circ z=x\circ(y\circ z)\qquad \text{and}\qquad X\cap N(X)=\varnothing.
\]
Here, $X\circ Z=\bigcup\{x\circ z\mid x\in X, z\in Z\}$, $X\circ z=X\circ\{z\}$ and $x\circ Z=\{x\}\circ Z$ for $X,Z\subseteq S$. Note that the notation $z\in x\circ y$  is equivalent to (and more convenient than) the more traditional ternary relation notation $R(x,y,z)$.
    
A \textit{(disjointive associative) model} is a pair $\mathfrak{M}=(\mathfrak{F}, V)$ where $\mathfrak{F}=(S,\circ, N)$ is a frame and $V$ is a valuation on $S$, i.e., a function $V:P\to \mathcal{P}(S)$.
\end{definition}

\begin{definition}[Satisfaction and refutation]\label{def-satisfaction}
    For models $\mathfrak{M}=(S, \circ,N, V)$ and formulas $\varphi$ of the language $\{\land, \lor, \neg, \backslash, /\}$, we define the \textit{satisfaction set} of $\varphi$ (w.r.t. $\mathfrak{M}$), written $\|\varphi\|_\mathfrak{M}$ or just $\|\varphi\|$, recursively as follows.
    \begin{alignat*}{3}
        \| p\|&\mathrel{:=}V(p) &\qquad 
        \|\neg \varphi\|&\mathrel{:=}N(\|\varphi\|) 
        \\
        \| \varphi \land \psi \|&\mathrel{:=}\|\varphi\|\cap \|\psi\| &\qquad
        \| \varphi \backslash \psi \|&\mathrel{:=}\{s\in S\mid \|\varphi\|\circ s\subseteq \|\psi\|\} \\
        \| \varphi\lor \psi \|&\mathrel{:=}\|\varphi\|\cup \|\psi\|&\qquad 
        \| \psi \slash \varphi \|&\mathrel{:=}\{s\in S\mid s\circ \|\varphi\|\subseteq \|\psi\|\}.
    \end{alignat*}
    This corresponds to the following point-wise definition of satisfaction $s\in \|\varphi\|$, written $\mathfrak{M}, s\Vdash \varphi$ or simply $s\Vdash\varphi$.
    \begin{align*}
        &
        s\Vdash p && \text{iff} && s\in V(p)\\
        &
        s\Vdash \neg\varphi && \text{iff} && s\in N(\|\varphi\|)\\
        &
        s\Vdash \varphi\land\psi && \text{iff} && 
        s\Vdash \varphi \text{\hspace{0.1cm} and \hspace{0.1cm}} 
        s\Vdash \psi\\
        &
        s\Vdash \varphi\lor\psi && \text{iff} && 
        s\Vdash \varphi \text{\hspace{0.1cm} or \hspace{0.1cm}} 
        s\Vdash \psi\\
        &
        s\Vdash \varphi\backslash\psi && \text{iff} && \text{$\forall x,z\in S$: if } 
        x\Vdash \varphi\text{ and $z\in x\circ s$, then } 
        z\Vdash \psi\\
        &
        s\Vdash \psi\slash\varphi && \text{iff} && \text{$\forall y,z\in S$: if } 
        y\Vdash \varphi\text{ and $z\in s\circ y$, then } 
        z\Vdash \psi.
    \end{align*}
    A model $\mathfrak{M}$ is said to \textit{refute} a formula $\varphi$ or that $\varphi$ \emph{fails} in $\mathfrak{M}$, written $\mathfrak{M}\nvDash\varphi$, if $\|\varphi\|_\mathfrak{M}\neq S$, i.e., if there is $s\in S$ such that $\mathfrak{M},s\nVdash \varphi$. We say that a frame $\mathfrak{F}$  \textit{refutes} a formula $\varphi$, or that $\varphi$ \emph{fails} in $\mathfrak{F}$, and we write $\mathfrak{F}\nvDash\varphi$, if there is a valuation $V$ such that  $(\mathfrak{F}, V) \nvDash\varphi$.
\end{definition}
Take note that the definition of negation in terms of a disjointive operation precisely ensures that $\mathfrak{M}, s\Vdash \neg\varphi$ implies $\mathfrak{M}, s \not \Vdash \varphi$.

\begin{remark}\label{r: fincofindual}
Observe that if $\mathfrak{F}=(S,\circ, N)$ is a disjointive associative frame, then
$$\mathfrak{F}^+\mathrel{:=}\big(\mathcal{P}(S), \cap, \cup,S, \varnothing, \neg, \cdot, \ld, \rd \big )$$ is a disjointive distributive residuated lattice, where $X\cdot Y:= X \circ Y$, $X \ld Y:= \{z \mid X \circ z \subseteq Y\}$, $Y\rd X:= \{z \mid z \circ X \subseteq Y\}$, and $\neg X:=N(X)$ (see, e.g., Section~3.4.10 of \cite{GalatosJKO07}) and that the definition of satisfaction set directly reflects these operations; we call this the \emph{complex algebra} of $\mathfrak{F}$. Also, note that valuations $V:P\to \mathcal{P}(S)$ on the frame are in bijective correspondence with homomorphisms $h$ from the formula algebra to the complex algebra (given $V$, define $h$ by $p\mapsto V(p)=\|p\|$; and, vice versa, given $h$ define $V$ by $p\mapsto h(p)$), and we have $h(\varphi)=\|\varphi\|$ for all formulas $\varphi$ in the language $\{\land, \lor, \neg, \backslash, \slash\}$. It follows that $\mathfrak{F}^+\nvDash \varphi$ iff $\mathfrak{F}\nvDash \varphi$; i.e., the pointwise satisfaction relation $\Vdash$ simply reflects the algebraic satisfaction of the complex algebra.

In particular, the algebra $\mathcal{P}_{\omega}(\mathbb{N})^+$ of Remark~\ref{rm:fincofin} arises as the complex algebra of the disjointive associative frame $(\mathcal{P}_{\omega}(\mathbb{N}), \cup, {}^c)$, where $\circ$ is defined as $\cup$ and $N$ is the complementation operation $^c$, so $u\in s\circ t \iff u=s\cup t$, and $N(X)=X^c$. Hence, to prove Lemma~\ref{l: forward}, it is enough to show that if $\mathcal{W}$ tiles $\mathbb{N}^2$ then $(\mathcal{P}_{\omega}(\mathbb{N}), \cup, {}^c)\nvDash \phi^{}_\mathcal{W}$. We show this in Lemma~\ref{l: forward2} (which thereby is an equivalent, relational formulation of Lemma~\ref{l: forward}).\footnote{For those familiar with relational semantics for BI, it may be of interest to observe that every model $(\mathcal{P}_{\omega}(\mathbb{N}), \cup, {}^c, V)$ defines an upwards and downwards closed monoidal model $(\mathcal{P}_{\omega}(\mathbb{N}),=, \cup, \{\varnothing\}, V)$ for BI (and BBI). In fact, this can be used to provide an alternative proof of the undecidability of BI that circumvents algebraic semantics by directly employing relational semantics. Because we then have $(\mathcal{P}_{\omega}(\mathbb{N}), \cup, {}^c)\nvDash \phi^{}_\mathcal{W}$ implies $(\mathcal{P}_{\omega}(\mathbb{N}),=, \cup, \{\varnothing\})\nvDash \phi^{}_\mathcal{W}$ implies $\nvdash \top\DC \phi^{}_\mathcal{W}$. Together with Lemma~\ref{l: forward2}, this shows that if $\mathcal{W}$ tiles $\mathbb{N}^2$ then $\nvdash \top\DC \phi^{}_\mathcal{W}$. The converse direction (if $\nvdash \top\DC \phi^{}_\mathcal{W}$, then $\mathcal{W}$ tiles $\mathbb{N}^2$) can likewise be proven by employing UDMF semantics and without appeal to algebraic semantics, as we elaborate on below.}
\end{remark}
Conversely, it is possible to define a disjointive associative frame from every disjointive distributive residuated lattice and establish a completeness result. We do this in the next lemma and defer its proof to Section~\ref{sec:duality}.

\begin{lemma}\label{l: dual}
    If a formula in the language $\{\land, \lor, \neg, \backslash, \slash\}$ is refuted by a disjointive distributive residuated lattice, then it fails in a disjointive associative frame.
\end{lemma}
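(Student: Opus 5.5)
We follow the standard prime-filter (Jónsson--Tarski / Priestley-style) duality for distributive lattices with operators, using the complex algebra of Remark~\ref{r: fincofindual} as the target. Let $\mathbf{A}$ be a disjointive distributive residuated lattice with $\mathbf{A}\nvDash\varphi$, witnessed by a homomorphism $h$ from the formula algebra with $h(\varphi)\neq\top$. Let $S$ be the set of prime filters of the lattice reduct of $\mathbf{A}$ (proper, nonempty filters $F$ with $a\lor b\in F$ implying $a\in F$ or $b\in F$). Define
\[
H\in F\circ G \quad\text{iff}\quad \{a\cdot b\mid a\in F,\ b\in G\}\subseteq H .
\]
For $X\subseteq S$, define $N(X)$ to be the set of all $F\in S$ such that there is $a$ with $\neg a\in F$ and $\{G\in S\mid a\in G\}\subseteq X$. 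Alternatively, as a first attempt, put $N(X)=\{F\mid \exists a:\ \neg a\in F,\ \widehat a\subseteq X\}$, where $\widehat a:=\{G\in S\mid a\in G\}$. The candidate valuation is $V(p):=\widehat{h(p)}$. The goal is the truth lemma $\|\psi\|=\widehat{h(\psi)}$ for every formula $\psi$. Since $h(\varphi)\neq\top$, the prime filter theorem gives a prime filter omitting $h(\varphi)$, and hence a failure point.

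The key steps are as follows.

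\emph{Disjointness.} If $F\in X\cap N(X)$, then $\neg a\in F$ and $\widehat a\subseteq X$. This alone does not force $a\in F$, so the definition of $N$ has to be adjusted: it must be disjointive for \emph{all} $X$ while still agreeing with $\neg$ on sets of the form $\widehat a$. The fix I would try is
\[
N(X):=\bigcup\{\widehat{\neg a}\mid \widehat a= X\}.
\]
With this definition $N(\widehat a)=\widehat{\neg a}$ holds exactly: if $\widehat a=\widehat b$ then $a=b$ by the prime filter theorem, so only one $a$ contributes. Disjointness also holds: $F\in \widehat a\cap\widehat{\neg a}$ means $a\wedge\neg a=\bot\in F$, contradicting properness. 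For $X$ not of the form $\widehat a$ we get $N(X)=\varnothing$, which is trivially disjoint from $X$.

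\emph{Associativity.} We must show $(F\circ G)\circ K=F\circ(G\circ K)$ as sets of prime filters. This is the main obstacle, because the existence of the intermediate prime filter requires a prime-filter extension argument. Suppose $H\in(F\circ G)\circ K$ via some $L\in F\circ G$. Then $H\supseteq F\cdot G\cdot K$ elementwise by associativity of $\cdot$. For the converse inclusion we need a prime $M$ with $M\in G\circ K$ and $H\in F\circ M$. Consider the filter generated by $G\cdot K$ and the ideal $\{c\mid \exists a\in F:\ a\cdot c\notin H\}$, more precisely the set of $c$ with $a\cdot c\notin H$ for some $a\in F$. Closing this set downward and under joins, we use that $\cdot$ preserves finite joins and order in each coordinate (a consequence of residuation) together with primeness of $H$ and the fact that $F$ is a filter. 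This shows the set is an ideal disjoint from the filter, and the prime filter theorem then yields $M$. We also check the auxiliary claim that for prime filters, $F\circ G$ depends only on the elementwise product, and we handle the case where $F\cdot G$ generates an improper filter, which simply makes $F\circ G$ empty.

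\emph{Truth lemma for divisions.} We must show $\widehat{a\backslash b}=\{G\mid \widehat a\circ G\subseteq\widehat b\}$. The inclusion from left to right is immediate from $a\cdot(a\backslash b)\le b$. For the converse, given $a\backslash b\notin G$, we build prime filters $F\ni a$ and $H\not\ni b$ with $H\in F\circ G$. Take $F:=\uparrow a$ and first extend to a prime $H$ containing $\uparrow a\cdot G$ but not $b$. This is possible because $a\cdot g\le b$ would give $g\le a\backslash b$, contradicting $a\backslash b\notin G$. Then enlarge $\uparrow a$ to a prime $F$ with $F\cdot G\subseteq H$, by the same ideal-separation argument as in the associativity step. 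The case of $\rd$ is symmetric, and $\land,\lor$ are handled by the filter and prime properties. Putting these together, the frame $(S,\circ,N)$ with valuation $V$ refutes $\varphi$, which is the claim of Lemma~\ref{l: dual}.
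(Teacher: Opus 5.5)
Your proposal is correct and follows the paper's proof. It uses the same prime-filter frame with $H\in F\circ G$ iff $FG\subseteq H$, the same repaired negation ($N(\widehat{a})=\widehat{\neg a}$, and $N(X)=\varnothing$ otherwise), and the same two-stage construction for the division case of the truth lemma: first a prime $H\supseteq\langle aG\rangle$ omitting $b$, then a prime $F\supseteq{\uparrow}a$ with $FG\subseteq H$.

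The only difference is in the prime-extension step. The paper applies Zorn's lemma to the proper filters $v'$ with $Fv'\subseteq H$ and $GK\subseteq v'$, and then shows a maximal one is prime. You instead note that $\{c\mid \exists a\in F:\ a\cdot c\notin H\}$ is an ideal, by the same additivity-plus-primeness computation, disjoint from the filter generated by $GK$, and then invoke the prime filter theorem. That is a slightly tidier packaging of the same argument.
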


For the benefit of readers who are familiar with the UDMF semantics of BI, we mention that for the purposes of proving undecidability for BI the following specialization of Lemma~\ref{l: dual} is enough; we include a proof of it also in Section~\ref{sec:duality}.

\begin{lemma}\label{l: dualBI}
    If a formula in the language $\{\land, \lor, \neg, \mimp\}$ is refuted by a BI-algebra, then it fails in a disjointive associative frame. 
\end{lemma}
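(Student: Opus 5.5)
Given a BI-algebra $\mathbf{A}$ refuting $\varphi$ in $\{\land,\lor,\neg,\mimp\}$, we will construct a disjointive associative frame from the prime filters of the Heyting lattice reduct of $\mathbf{A}$, adapting the standard duality for distributive lattices with operators. Let $S$ be the set of prime filters of $(A,\land,\lor,\top,\bot)$. For $F,G\in S$ define
\[
F\circ G \mathrel{:=}\{H\in S \mid a\ast b\in H \text{ for all } a\in F,\ b\in G\},
\]
and for $X\subseteq S$ put
\[
N(X)\mathrel{:=}\{F\in S\mid \neg a\in F \text{ for some } a \text{ with } \hat a\supseteq X\},
\]
where $\hat a=\{F\in S\mid a\in F\}$. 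In fact, to keep the argument simple we only need $N$ to agree with $\neg$ on sets of the form $\hat a$. So it suffices to set $N(X)=\widehat{\neg a}$ when $X=\hat a$ (well defined, since $a\mapsto\hat a$ is injective by the prime filter theorem) and $N(X)=\varnothing$ otherwise. Disjointness then holds: for $X=\hat a$ we have $\hat a\cap\widehat{\neg a}=\widehat{a\land\neg a}=\hat\bot=\varnothing$, since prime filters are proper.

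The key steps, in order, are as follows.

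(1) Show that $a\mapsto \hat a$ is an injective lattice homomorphism $\mathbf{A}\to\mathcal{P}(S)$ preserving $\top,\bot$. This is standard, via the prime filter theorem.

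(2) Show $\widehat{a\ast b}=\hat a\circ\hat b$. The inclusion $\supseteq$ is immediate from the definition of $\circ$. For $\subseteq$, given a prime $H\ni a\ast b$, we must find primes $F\ni a$, $G\ni b$ with $F\ast G\subseteq H$. We do this by two Zorn/prime-filter extensions, using that $\ast$ distributes over $\lor$ in each argument (a residuated operation preserves joins) and is monotone. First extend ${\uparrow}a$ to a prime $F$ maximal among filters $F'$ with $F'\ast{\uparrow}b\subseteq H$. Then extend ${\uparrow}b$ to a prime $G$ maximal among filters $G'$ with $F\ast G'\subseteq H$. Primeness follows from join-distributivity together with $H$ being prime.

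(3) Show $\widehat{a\mimp b}=\{G\mid \hat a\circ G\subseteq\hat b\}$. The inclusion $\subseteq$ follows from $a\ast(a\mimp b)\le b$. For $\supseteq$, argue contrapositively: if $a\mimp b\notin G$, extend ${\uparrow}a$ to a prime $F$ and $G$'s image to a prime $H$ with $b\notin H$ and $F\ast G\subseteq H$. Take the filter generated by $\{x\ast y\mid x\in F, y\in G\}$; it omits $b$, since $x\ast y\le b$ with $x\ge a$ would give $y\le a\mimp b$. Then separate it from the ideal ${\downarrow}b$ by a prime filter, and finally shrink to a prime $F$ as in step (2).

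(4) Show associativity $(F\circ G)\circ K=F\circ(G\circ K)$. This is the standard canonical-frame argument, again requiring a prime-filter construction of an intermediate point. Here commutativity is not needed; associativity of $\ast$ and join-preservation suffice.

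With steps (1)--(4), a structural induction shows that the valuation $V(p)=\widehat{h(p)}$ satisfies $\|\psi\|=\widehat{h(\psi)}$ for every formula $\psi$ in the language. In the negation case, $\|\neg\psi\|=N(\widehat{h\psi})=\widehat{\neg h\psi}$. Since $h(\varphi)\neq\top$, there is a prime filter omitting $h(\varphi)$, and so $\varphi$ fails in the frame.

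The main obstacle is the pair of prime-filter existence arguments in steps (2)--(4), especially associativity. I would handle all three uniformly via a single lemma: if $\mathcal F_1\ast\mathcal F_2\subseteq H$ for filters $\mathcal F_i$ and a prime $H$, then each $\mathcal F_i$ can be enlarged to a prime filter preserving the inclusion. This is proved by Zorn's lemma, using that $\ast$ preserves binary joins in each argument.
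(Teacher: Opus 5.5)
Your proof is correct, but it is not the route the paper takes for this lemma. The paper proves Lemma~\ref{l: dualBI} by a short reduction to the known completeness of BI for upwards and downwards closed monoidal (UDMF) models. From $\nvdash\top\Rightarrow\varphi$ it takes a UDMF countermodel $(S,\preccurlyeq,\circ,E,V)$, forgets $\preccurlyeq$ and $E$, and puts $N(X):=({\downarrow}X)^c$. Then intuitionistic negation is literally $N$, reflexivity of $\preccurlyeq$ gives $X\cap N(X)=\varnothing$, associativity of $\circ$ is a UDMF axiom, and the clauses for $\land,\lor,\mimp$ coincide, so the same point refutes $\varphi$.

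What you wrote is instead the prime-filter canonical frame. This is essentially the paper's proof of the more general Lemma~\ref{l: dual}, specialised to BI: the same $\circ$, the same $N$ (equal to $\varnothing$ off the sets $\hat a$), the same Zorn-maximality argument for primeness and for the intermediate point in associativity, and the same separation argument for the residual. The UDMF route is much shorter and stays inside BI's established relational semantics. However, it outsources all the filter work to an external completeness theorem, and it starts from non-provability rather than from the given algebra. Your route is self-contained. Since it only uses residuation of $\ast$ and $x\land\neg x=\bot$ (never commutativity, $1$, or $\to$), it transfers verbatim to GBI and to all disjointive distributive residuated lattices, which is why the paper needs it for Lemma~\ref{l: dual} anyway.

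Some small points on your write-up:
\begin{itemize}
\item Step (2) is superfluous, since $\ast$ does not occur in $\{\land,\lor,\neg,\mimp\}$. Only your uniform maximality lemma is used, and its properness clause rests on $\bot\ast y=\bot$, the empty-join case of join preservation.
\item In step (3) the order matters. The filter separated from ${\downarrow}b$ must be generated by ${\uparrow}a\ast G$, equivalently the upward closure of the down-directed set $\{a\ast y\mid y\in G\}$. It must not be generated by $F\ast G$ for an already chosen prime $F\ni a$, which could contain $b$. Only after obtaining $H$ do you \emph{enlarge} (not shrink) ${\uparrow}a$ to a prime $F$ maximal with $F\ast G\subseteq H$. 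Your justification via $x\geq a$ shows this is what you intend.
\item For associativity, the monotonicity estimate $f\ast\bigwedge_i(g_i\ast k_i)\geq f\ast(g\ast k)=(f\ast g)\ast k$, with $g=\bigwedge_i g_i$ and $k=\bigwedge_i k_i$, is what lets the filter generated by $G\ast K$ feed into your lemma.
\end{itemize}
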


To prove Lemma~\ref{l: backward}, it therefore suffices to show that for every set of tiles $\mathcal{W}$, if $\phi^{}_\mathcal{W}$ fails in a disjointive associative frame, then $\mathcal{W}$ tiles $\mathbb{N}^2$. This is precisely the content of our later Lemma~\ref{l: backward2}. 
\\\\
We continue with the definition of $\phi^{}_{\mathcal{W}}$ given a finite set of tiles $\mathcal{W}$. We use the convention that $\neg$ has highest binding power, followed by $\backslash, /$ and then by $\wedge,\vee$.

\begin{definition}[Tiling formulas]   
    Given a  finite set $\mathcal{W}$ of tiles, let $\{1, \hdots, k\}$ be the finite set of colors involved, where $k \in \mathbb{Z}^+$, and we introduce propositional letters $u_1, \hdots, u_k$, $d_1, \hdots, d_k$, $l_1, \hdots, l_k$, $r_1, \hdots, r_k$. For a tile $t=(t_1,t_2,t_3,t_4)$, by abusing notation, we also write $t$ for the following corresponding conjunction of literals:
    $$
    \bigg(u_{t_1}\land \bigwedge_{j\neq t_1}\neg u_{j}\bigg)\land \bigg(d_{t_2}\land \bigwedge_{j\neq t_2}\neg d_{j}\bigg)\land \bigg(l_{t_3}\land \bigwedge_{j\neq t_3}\neg l_{j}\bigg)\land \bigg(r_{t_4}\land \bigwedge_{j\neq t_4}\neg r_{j}\bigg).$$
    
    Furthermore, we will include the following propositional letters:
    \begin{itemize}
        \item $\textsc{x}$, used to encode elements, composition by which can increment the $x$-coordinate, from $(m,n)$ to $(m+1,n)\in \mathbb{N}^2$. 
        \item $\textsc{y}$, used similarly to encode elements that increment along the $y$-axis.
        \item $\textsc{c}$, used as a gadget to relate arbitrarily long sequences of elements in the model to the initial element via associativity.
        \item $\textsc{ee}, \textsc{oe}, \textsc{oo}$ and $\textsc{eo}$, to be intuited as `a combination of an even number of $\textsc{x}$s and an even number of $\textsc{y}$s', $\hdots$,  `a combination of an even number of $\textsc{x}$s and an odd number of $\textsc{y}$s', respectively.
    \end{itemize}
    Using these propositional letters, we abbreviate: 
    \begin{itemize}
        \item $\textsc{x}'\mathrel{:=}\textsc{x}\land \textsc{c}\land \textsc{c}\backslash \textsc{c}$ 
        \item $\textsc{y}'\mathrel{:=}\textsc{y}\land \textsc{c}\land \textsc{c}\backslash \textsc{c}$ 
        \item $ \textsc{e}\textsc{e}^c\mathrel{:=} \textsc{o}\textsc{e} \lor \textsc{o}\textsc{o} \lor \textsc{e}\textsc{o}$ 
        \item $ \textsc{o}\textsc{e}^c\mathrel{:=} \textsc{e}\textsc{e} \lor \textsc{o}\textsc{o} \lor \textsc{e}\textsc{o}$
        \item $ \textsc{o}\textsc{o}^c\mathrel{:=} \textsc{e}\textsc{e} \lor \textsc{o}\textsc{e} \lor \textsc{e}\textsc{o}$
        \item $ \textsc{e}\textsc{o}^c\mathrel{:=} \textsc{e}\textsc{e} \lor \textsc{o}\textsc{e} \lor \textsc{o}\textsc{o}$
    \end{itemize}
    Here, the added superscripts $^c$ are shorthand for a notational `complement'.
We further abbreviate:
{\small
    \begin{align*}
        &\alpha_{\textsc{ee}}:= \ 
        \textsc{ee}
        \land \neg\textsc{oe}\land\neg\textsc{oo}\land\neg \textsc{eo}\land \neg(\textsc{x}'\backslash \textsc{oe}^c)\,\land\bigvee_{t\in \mathcal{W}}\bigg(t\land \textsc{x}\backslash \Big[\textsc{e}\textsc{e}\lor \big(\textsc{o}\textsc{e}\land \bigvee_{t'\in\mathcal{W}}^{\text{R}(t)=\text{L}(t')}t'\big)\Big]
        \land \Big[\textsc{e}\textsc{e}\lor \big(\textsc{e}\textsc{o}\land \bigvee_{t''\in\mathcal{W}}^{\text{U}(t)=\text{D}(t'')}t''\big)\Big]/\textsc{y}\bigg)\\[8pt]
        &\alpha_{\textsc{oe}}:= \ 
        \textsc{oe} \land\neg\textsc{oo}\land\neg\textsc{eo}\land\neg\textsc{ee}\land\neg(\textsc{oo}^c/\textsc{y}')\,\land\bigvee_{t\in \mathcal{W}}\bigg(t\land\textsc{x}\backslash \Big[\textsc{o}\textsc{e}\lor \big(\textsc{e}\textsc{e}\land \bigvee_{t'\in\mathcal{W}}^{\text{R}(t)=\text{L}(t')}t'\big)\Big]
        \land \Big[\textsc{o}\textsc{e}\lor \big(\textsc{o}\textsc{o}\land \bigvee_{t''\in\mathcal{W}}^{\text{U}(t)=\text{D}(t'')}t''\big)\Big]/\textsc{y} \bigg)\\[8pt]
        &\alpha_{\textsc{oo}}:= \ 
        \textsc{oo}\land\neg\textsc{eo}\land\neg\textsc{ee}\land\neg \textsc{oe}\land \neg(\textsc{x}'\backslash \textsc{eo}^c)\,\land\bigvee_{t\in \mathcal{W}}\bigg(t\land \textsc{x}\backslash \Big[\textsc{oo}\lor \big(\textsc{eo}\land \bigvee_{t'\in\mathcal{W}}^{\text{R}(t)=\text{L}(t')}t'\big)\Big]\land \Big[\textsc{oo}\lor \big(\textsc{oe}\land \bigvee_{t''\in\mathcal{W}}^{\text{U}(t)=\text{D}(t'')}t''\big)\Big]/\textsc{y}\bigg)\\[8pt]
        &\alpha_{\textsc{eo}}:= \ 
        \textsc{eo} \land\neg\textsc{ee}\land\neg \textsc{oe}\land\neg \textsc{oo}\land \neg(\textsc{ee}^c/\textsc{y}')\,\land\bigvee_{t\in \mathcal{W}}\bigg(t\land \textsc{x}\backslash \Big[\textsc{eo}\lor \big(\textsc{oo}\land \bigvee_{t'\in\mathcal{W}}^{\text{R}(t)=\text{L}(t')}t'\big)\Big]\land \Big[\textsc{eo}\lor \big(\textsc{ee}\land \bigvee_{t''\in\mathcal{W}}^{\text{U}(t)=\text{D}(t'')}t''\big)\Big]/\textsc{y}\bigg)
    \end{align*}
    }
Intuitively, for special `coordinate' points $\langle m,n\rangle$ in a model $\mathfrak{M}$,  $\langle m,n\rangle\Vdash \alpha_{\textsc{EE}}$ states that (a) $m$ and $n$ correspond, respectively, to an even $x$-coordinate and an even $y$-coordinate [i.e, $\textsc{ee}, \neg \textsc{oe},\neg\textsc{oo},\neg \textsc{eo}$], (b) the $x$-coordinate can be incremented to reach (odd, even) [i.e., $\neg(\textsc{x}'\backslash \textsc{oe}^c)$],
and (c) there is some tile $t\in\mathcal{W}$ placed here---by the way a tile is defined, exactly one tile is placed here---such that (i)~an increment in the $x$-coordinate implies that we reach $(\text{odd},\text{even})$, where a unique tile~$t'$ is placed whose left edge matches the right edge of $t$, and (ii)~an increment in the $y$-coordinate implies that we reach $(\text{even},\text{odd})$, where a unique tile~$t''$ holds whose down edge matches the top edge of $t$.

Finally, for any propositional variable~$p$, we let $\alpha\mathrel{:=}\alpha_{\textsc{ee}}\lor\alpha_{\textsc{oe}}\lor\alpha_{\textsc{oo}} \lor \alpha_{\textsc{eo}}$ and 
\begin{equation}\label{eq-phi}
        \phi^{}_\mathcal{W} \mathrel{:=}  \Big[\neg (\textsc{x}'\backslash\textsc{o}\textsc{e}^c) \land \textsc{c}\backslash \alpha\land \textsc{c}\backslash \alpha/\textsc{c}\land 
        \alpha/\textsc{c}\Big]\backslash p.
\end{equation}        
\end{definition}

\begin{lemma}\label{l: backward2}
Let $\mathcal{W}$ be a finite set of tiles and $\mathfrak{F}$ a disjointive associative frame. If $\mathfrak{F}\nvDash\phi^{}_\mathcal{W}$, then $\mathcal{W}$ tiles $\mathbb{N}^2$.
\end{lemma}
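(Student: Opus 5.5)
The plan is to unfold a refutation of $\phi^{}_\mathcal{W}$ into a grid of points whose tile labels give a tiling. Suppose $(\mathfrak{F},V)\nVdash\phi^{}_\mathcal{W}$ at some $s$. By the clause for $\backslash$ there are $a\Vdash \neg(\textsc{x}'\backslash\textsc{oe}^c)\land \textsc{c}\backslash\alpha\land\textsc{c}\backslash\alpha/\textsc{c}\land\alpha/\textsc{c}$ and $z\in a\circ s$ with $z\nVdash p$. Only $a$ matters from here on; it will play the role of the origin $\langle 0,0\rangle$.

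\emph{Step 1: admissible points.} Let $C$ be the set of points satisfying $\textsc{c}\land\textsc{c}\backslash\textsc{c}$; every $\textsc{x}'$- or $\textsc{y}'$-point lies in $C$. Using associativity, I will check that $C$ is closed under $\circ$. Indeed, for $u,v\in C$ we have $u\circ v\subseteq\textsc{c}$, because $u\Vdash\textsc{c}$ and $v\Vdash\textsc{c}\backslash\textsc{c}$. Also $\|\textsc{c}\|\circ(u\circ v)=(\|\textsc{c}\|\circ u)\circ v\subseteq\|\textsc{c}\|\circ v\subseteq\|\textsc{c}\|$. Call a point \emph{admissible} if it lies in $a$, in $U\circ a$, in $a\circ W$, or in $U\circ a\circ W$ for some $U,W\in C$. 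The four conjuncts $\textsc{c}\backslash\alpha$, $\textsc{c}\backslash\alpha/\textsc{c}$, $\alpha/\textsc{c}$ and the trivial case $a$ itself cover these four shapes, so every admissible point satisfies $\alpha$. I will show $a\Vdash\alpha_{\textsc{ee}}$ from the extra conjunct $\neg(\textsc{x}'\backslash\textsc{oe}^c)$ together with disjointness, which rules out the other disjuncts of $\alpha$. Because admissibility is preserved by left-composition with $\textsc{x}'$-points and right-composition with $\textsc{y}'$-points (again by associativity and closure of $C$), all points reached this way satisfy $\alpha$.

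\emph{Step 2: local successors and matching.} Let $w$ be admissible with $w\Vdash\alpha_{\textsc{ee}}$; the other three parities are symmetric. Disjointness makes $w$ satisfy exactly one parity letter and exactly one tile $t_w$. The conjunct $\neg(\textsc{x}'\backslash\textsc{oe}^c)$ gives some $\textsc{x}'$-point $e$ and some $w'\in e\circ w$ with $w'\nVdash\textsc{oe}^c$. Since $w'$ is admissible, it satisfies $\alpha$, hence exactly one parity letter, and that letter must be $\textsc{oe}$. The universal conjunct $\textsc{x}\backslash[\dots]$ then forces $\text{R}(t_w)=\text{L}(t_{w'})$. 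Dually, $\alpha_{\textsc{oe}}$ and $\alpha_{\textsc{eo}}$ contain $\neg(\cdot/\textsc{y}')$, which yields $\textsc{y}$-successors that flip the $y$-parity and match vertically. The four-way parity bookkeeping is needed for a specific reason: a successor with the \emph{same} parity is allowed by the formulas, but the existential witnesses are forced to flip the parity. So the universal constraints of $\textsc{x}\backslash$ and $/\textsc{y}$ really apply between consecutive grid points.

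\emph{Step 3: building the grid (main obstacle).} The difficulty is commuting squares. Suppose we have $z_{m+1,n}\in e\circ z_{m,n}$ and $z_{m,n+1}\in z_{m,n}\circ f$. We need a point $z_{m+1,n+1}$ that is simultaneously an $\textsc{x}$-successor of $z_{m,n+1}$ and a $\textsc{y}$-successor of the \emph{same} $z_{m+1,n}$. Associativity only gives $e\circ(z_{m,n}\circ f)=(e\circ z_{m,n})\circ f$. So a point of $e\circ z_{m,n+1}$ lies in $q\circ f$ for \emph{some} $q\in e\circ z_{m,n}$, not necessarily the chosen $z_{m+1,n}$. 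The parity argument does show that such a $q$ has the parity of slot $(m+1,n)$, and that its tile matches both neighbours at that square. To avoid committing to points too early, I would first prove that every finite square $[0,N]^2$ is tileable. For this I fix $\textsc{x}'$-witnesses $e_1,\dots,e_N$ and $\textsc{y}'$-witnesses $f_1,\dots,f_N$, and pick a single top-right point in $e_N\circ\cdots\circ e_1\circ a\circ f_1\circ\cdots\circ f_N$. I then read off all grid points by decomposing this one point along a fixed bracketing, re-choosing the predecessors so that each row is obtained from the row above via $/\textsc{y}$-decompositions. By Step 2, neighbouring tiles then match. Finally, I apply König's lemma, since $\mathcal{W}$ is finite, to pass from tilings of all finite squares to a tiling of $\mathbb{N}^2$. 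Making the decomposition in this step genuinely consistent, so that each point's left and lower neighbours are the ones it was checked against, is where I expect the real work to lie.
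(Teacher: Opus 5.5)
There is a genuine gap in Step~3, and it concerns parities rather than commutation.

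\textbf{The parity gap.} In Step~2 you correctly note two things. The universal conjuncts $\textsc{x}\backslash[\textsc{ee}\lor(\textsc{oe}\land\cdots)]$ and $[\cdots]/\textsc{y}$ impose nothing along a step that keeps the parity. Only the existential witnesses are forced to flip it. But the points you read off by decomposing a single point of $e_N\circ\cdots\circ e_1\circ a\circ f_1\circ\cdots\circ f_N$ are not witnesses, so nothing forces them to flip.

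For example, suppose the top-right point is known to be an $\textsc{ee}$-point. Splitting it as $z_{N,N}\in e_N\circ z_{N-1,N}$ only tells you that $z_{N-1,N}$ is an $\textsc{ee}$- or $\textsc{oe}$-point. If it is $\textsc{ee}$, no horizontal matching between $z_{N-1,N}$ and $z_{N,N}$ is enforced, and the same can happen at every decomposition step. So ``by Step~2, neighbouring tiles then match'' does not follow. Likewise, your remark that $q$ ``has the parity of slot $(m+1,n)$'' holds only if the parity of the top-right corner of that square is already known. For an arbitrary point of $e\circ z_{m,n+1}$ it is not.

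\textbf{A separate, repairable error.} The claim $a\Vdash\alpha_{\textsc{ee}}$ is false. The antecedent forces $\alpha$ only on $\|\textsc{c}\|\circ a$, $a\circ\|\textsc{c}\|$ and $\|\textsc{c}\|\circ a\circ\|\textsc{c}\|$. The conjunct $\neg(\textsc{x}'\backslash\textsc{oe}^c)$ speaks only about an $\textsc{x}'$-successor of $a$. Concretely, removing $\varnothing$ from every $V(\cdot)$ in the model of Lemma~\ref{l: forward2} keeps the antecedent true at $\varnothing$, yet $\varnothing\nVdash\alpha$. To repair this, leave the origin out of the tiling (the paper never tiles $\langle 0,0\rangle$ either), together with any point whose parity could only be inferred from it. The set $\{1,2,\dots\}\times\mathbb{N}$ is still a copy of $\mathbb{N}^2$.

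\textbf{The missing idea: anchoring parities on a staircase.}
\begin{enumerate}
\item Extract the witnesses in alternation along a staircase $\langle1,0\rangle\in x_1\circ a$, $\langle1,1\rangle\in\langle1,0\rangle\circ y_1$, $\langle2,1\rangle\in x_2\circ\langle1,1\rangle,\dots$. This alternation is forced anyway: the formula supplies $\textsc{x}'$-witnesses only at $a$ and at $\textsc{ee}$/$\textsc{oo}$-points, and $\textsc{y}'$-witnesses only at $\textsc{oe}$/$\textsc{eo}$-points. It is also what makes your composite nonempty. Every staircase point is then a witness of known parity.
\item Keep these points and fill in the rest outward from the staircase. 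Apply associativity to three already-built corners of a unit square to obtain the fourth. Above the staircase you have bottom-left, bottom-right and top-right; below it, bottom-left, top-left and top-right. The squares then commute by construction with the fixed $x_{m+1},y_{n+1}$, so finite squares and K\"onig's lemma are unnecessary.
\item The parity of each new point is forced by the two diagonal corners of its square. A two-step path from an $\textsc{ee}$-point to an $\textsc{oo}$-point must flip one coordinate at each step, since a step flips at most one. These are the paper's cases (a)--(d).
\end{enumerate}
Your top-down, row-by-row decomposition can be made to commute. However, it discards the staircase and with it exactly this parity anchor.
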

\begin{proof}
    Since $\mathfrak{F}\nvDash\phi^{}_\mathcal{W}$, there is a valuation $V$, inducing a model $\mathfrak{M}\mathrel{:=}(\mathfrak{F}, V)$, and a point $s'$ such that 
        $$\mathfrak{M}, s'\nVdash \phi^{}_{\mathcal{W}}.$$
    By the semantics of $\backslash$, this means that, in particular, there is some point $s$ such that
        $$\mathfrak{M}, s\Vdash \neg (\textsc{x}'\backslash\textsc{o}\textsc{e}^c) \land \textsc{c}\backslash \alpha\land \textsc{c}\backslash \alpha/\textsc{c}\land 
        \alpha/\textsc{c}.$$
    From this, we will show that $\mathcal{W}$ tiles $\mathbb{N}^2$. Our proof will proceed as follows: first, we identify elements $x_1, x_2, \hdots, y_1, y_2, \hdots$ within $\mathfrak{M}$ and use them to further identify what we will call `staircase' elements of $\mathfrak{M}$. We denote these staircase elements of $\mathfrak{M}$ by 
    $$\langle1,0\rangle, \langle1,1\rangle, \langle2,1\rangle, \hdots, \langle k,k\rangle, \langle k+1, k\rangle, \hdots,$$ and call them such because they will satisfy the following (see Fig.~\ref{fig:stair}):
    \begin{enumerate}
        \item[(stair)] 
        $\langle 1,0\rangle\in x_1\circ s$,\\
        $\langle 1,1\rangle\in\langle 1,0\rangle\circ y_1$,\\
        $\langle 2,1\rangle\in x_2\circ\langle 1,1\rangle$,\\
        $\hdots$\\
        $\langle k+1,k\rangle\in x_{k+1}\circ\langle k,k\rangle$,\\ 
        $\langle k+1,k+1\rangle\in\langle k+1, k\rangle\circ y_{k+1}, \hdots$
    \end{enumerate}
    \begin{figure}[h]
        \begin{center}
        \begin{tikzpicture}[scale=1.5]
            \node (s) at (0,0){$s$};
            \node (10) at (1,0){$\langle1,0\rangle$}edge["$x_1$"](s);
            \node (11) at (1,1){$\langle1,1\rangle$}edge["$y_1$"](10);
            \node (21) at (2,1){$\langle2,1\rangle$}edge["$x_2$"](11);
            \node (22) at (2,2){$\langle2,2\rangle$}edge["$y_2$"](21);
            \node (32) at (3,2){$\langle3,2\rangle$}edge["$x_3$"](22);
            \node at (2.9,2.5){$\cdot$};
            \node at (3.0,2.6){$\cdot$};
            \node at (3.1,2.7){$\cdot$};
        \end{tikzpicture}\\[10pt]
        \begin{tikzpicture}[scale=1]
        \node (p) at (0,2){$p$};
        \node at (1,2){$q$}edge["$x_i$"](p);
        \node at (1,2)[label=right: means $q\in x_i\circ p$]{};
        \node (q) at (0.5,0){$q$};
        \node at (0.5,1){$r$}edge["$y_i$"](q);
        \node at (1,0.5)[label=right: means $r\in q\circ y_i$]{};
        \end{tikzpicture}
        \caption{Staircase of grid points.}
        \label{fig:stair}
        \end{center}
    \end{figure}
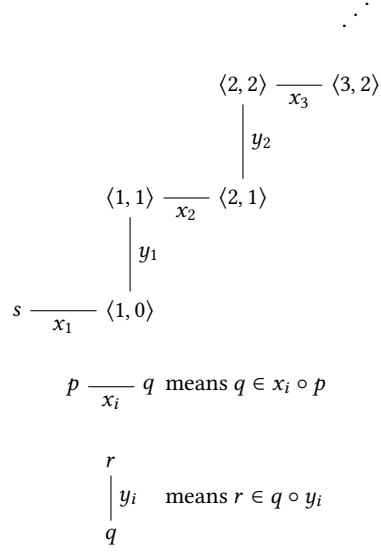
    Next, from the staircase and associativity, we find a full grid of elements $\langle m,n\rangle\in \mathfrak{M}$, for all $m,n \in \mathbb{N}$.\footnote{Actually, we will not define a point $\langle 0,0\rangle$. We could have included it, either by making $\phi^{}_{\mathcal{W}}$ more complicated, or by changing our construction/naming convention. Anyhow, tiling all but $(0,0)$ is obviously equivalent to tiling the full quadrant.} Finally, we associate points in the plane $(m,n)\in \mathbb{N}^2$ with the corresponding grid points $\langle m,n\rangle\in \mathfrak{M}$, and have the tiling of $\mathbb{N}^2$ be determined by what tile formula $t\in \mathcal{W}$ is satisfied at $\langle m,n\rangle\in \mathfrak{M}$ (i.e. $\mathfrak{M}, \langle m,n\rangle \Vdash t$).
    \\\\
    We begin by proving the `(stair)'-claim above. We will do this by induction on the sequence
    \begin{enumerate}    
        \item[(stair)] 
        $\langle 1,0\rangle\in x_1\circ s$\\
        $\langle 1,1\rangle\in\langle 1,0\rangle\circ y_1$\\
        $\langle 2,1\rangle\in x_2\circ\langle 1,1\rangle$\\
        $\hdots$\\
        $\langle k+1,k\rangle\in x_{k+1}\circ\langle k,k\rangle$\\ 
        $\langle k+1,k+1\rangle\in\langle k+1, k\rangle\circ y_{k+1}$\\
        $\hdots$
    \end{enumerate}    
    while simultaneously showing the following for all points of the sequence:
    \begin{itemize}
        \item[]
        $x_{i}\Vdash \textsc{x}'$, $y_{i}\Vdash \textsc{y}'$\\
        $\langle 2i,2i\rangle\Vdash \alpha_{\textsc{ee}}$\\
        $\langle 2i+1,2i\rangle\Vdash \alpha_{\textsc{oe}}$\\
        $\langle 2i+1,2i+1\rangle\Vdash \alpha_{\textsc{oo}}$\\
        $\langle 2i,2i+1\rangle\Vdash \alpha_{\textsc{eo}}$.
    \end{itemize}

For the induction base, since $s\Vdash \neg (\textsc{x}'\backslash\textsc{o}\textsc{e}^c)$, we have that 
        $$s\nVdash \textsc{x}'\backslash\textsc{o}\textsc{e}^c.$$
    This must be witnessed by some points, which we denote $x_1$ and $\langle 1,0\rangle$, i.e., 
        $$\text{$\langle 1,0\rangle\in x_1\circ s$ and $x_1\Vdash \textsc{x}'$, but $\langle 1,0\rangle\nVdash \textsc{o}\textsc{e}^c$.}$$
    Since $x_1\Vdash \textsc{x}'$,  in particular we have that $x_1\Vdash \textsc{c}$. Combined with $s\Vdash \textsc{c}\backslash\alpha$ and $\langle 1,0\rangle\in x_1\circ s$, this implies that
        $$\langle 1,0\rangle\Vdash \alpha_{\textsc{ee}}\lor\alpha_{\textsc{oe}}\lor\alpha_{\textsc{oo}} \lor \alpha_{\textsc{eo}}.$$
    But we established $\langle 1,0\rangle\nVdash \textsc{o}\textsc{e}^c$, hence $\langle 1,0\rangle\nVdash \alpha_{\textsc{ee}}\lor \alpha_{\textsc{oo}}\lor\alpha_{\textsc{eo}}$, so
        $$\langle 1,0\rangle\Vdash \alpha_{\textsc{oe}},$$
    which completes the proof of the induction base. 

The induction step divides into four cases, namely whether we assume the induction hypothesis up to 
\begin{enumerate}
\item[(i)] $\langle 2k+1,2k\rangle \in x_{2k+1}\circ\_$
\item[(ii)] $\langle 2k+1,2k+1\rangle\in\_\circ y_{2k+1}$
\item[(iii)] $\langle 2k,2k-1\rangle\in x_{2k}\circ\_$ \ or 
\item[(iv)] $\langle 2k,2k\rangle\in\_\circ y_{2k}$. 
\end{enumerate}

We prove the first, as the others are analogous. 
So, we assume that for some $k\geq 0$, the induction hypothesis holds for the sequence 
\begin{itemize}\item[]
$\langle 1,0\rangle\in x_1\circ s$\\
$\hdots$\\
$\langle 2k+1,2k\rangle\in x_{2k+1}\circ\langle 2k, 2k\rangle$.
\end{itemize}
(if $k=0$, this is just assuming the induction base). By the induction hypothesis, we  have  $\langle 2k+1,2k\rangle\Vdash \alpha_{\textsc{oe}},$ so in particular
    $$\langle 2k+1,2k\rangle\Vdash \neg(\textsc{o}\textsc{o}^c/\textsc{y}').$$
Thus, $\langle 2k+1,2k\rangle\nVdash \textsc{o}\textsc{o}^c/\textsc{y}'$. This must be witnessed by some points, denoted $y_{2k+1}$ and $\langle 2k+1, 2k+1 \rangle$, so
$$\text{$\langle 2k+1,2k+1\rangle\in\langle 2k+1, 2k \rangle\circ y_{2k+1}$ and $y_{2k+1}\Vdash \textsc{y}'$,}$$
 but $\langle 2k+1,2k+1\rangle\nVdash \textsc{o}\textsc{o}^c$. From the induction hypothesis that
\begin{itemize}\item[]
$\langle 1,0\rangle\in x_1\circ s$\\
$\langle 1,1\rangle\in\langle 1,0\rangle\circ  y_1$\\
$\langle 2,1\rangle\in x_2\circ \langle 1,1\rangle$\\
$\hdots$\\
$\langle 2k+1,2k\rangle\in x_{2k+1}\circ\langle 2k, 2k\rangle$
\end{itemize}
and the just established 
    $$\langle 2k+1,2k+1\rangle\in\langle 2k+1, 2k \rangle\circ y_{2k+1},$$
we derive, by repeated associativity, that 
    $$\langle 2k+1,2k+1\rangle\in x_{2k+1}\circ\dots\circ x_2\circ x_1\circ s\circ y_1\circ y_2\circ \dots\circ  y_{2k+1}.$$
So, 
there exist points $x$ and $y$ such that
    $$\langle 2k+1,2k+1\rangle\in x\circ s\circ y$$
and
    $$x\in x_{2k+1}\circ\dots\circ x_2\circ x_1, \quad y\in y_1\circ y_2\circ \dots\circ  y_{2k+1}.$$
By another induction, using that both $x_i\Vdash \textsc{c}\land \textsc{c}\backslash \textsc{c}$ 
and $y_i\Vdash \textsc{c}\land \textsc{c}\backslash \textsc{c}$
, 
it follows that also
    $$x\Vdash \textsc{c} \text{ and }y\Vdash \textsc{c}.$$
Consequently, as in the induction base, from $s\Vdash \textsc{c}\backslash \alpha/\textsc{c}$ and $\langle 2k+1,2k+1\rangle\in x\circ s\circ y$, we deduce
        $$\langle 2k+1,2k+1\rangle\Vdash \alpha.$$
Because $\langle 2k+1,2k+1\rangle\nVdash \textsc{o}\textsc{o}^c$, we have $\langle 2k+1,2k+1\rangle\nVdash \alpha_{\textsc{ee}}\lor \alpha_{\textsc{oe}}\lor\alpha_{\textsc{eo}}$, and hence
        $$\langle 2k+1,2k+1\rangle\Vdash \alpha_{\textsc{oo}}.$$
This completes the induction step, and establishes the necessary properties of the staircase. (Note that the distinctness of the points $\langle m,n\rangle$ is not asserted, nor is it required in what follows.)

Next, we construct the full grid, starting from the staircase elements and moving in two directions: top-left (above the staircase) and bottom-right (below the staircase); see Figure~\ref{fig:gridConstr}.

    For $m \leq n$, if $\langle m+1, n\rangle\in x_{m+1}\circ\langle m, n\rangle$ and $\langle m+1,n+1 \rangle\in\langle m+1, n\rangle\circ  y_{n+1}$, then $\langle m+1,n+1 \rangle\in (x_{m+1}\circ\langle m, n\rangle)\circ  y_{n+1}$, so by associativity $\langle m+1,n+1 \rangle\in x_{m+1}\circ(\langle m, n\rangle\circ  y_{n+1})$; i.e., there is a point, which we denote $\langle m, n+1\rangle$, such that $\langle m, n+1\rangle\in\langle m, n\rangle\circ  y_{n+1}$ and $\langle m+1,n+1\rangle\in x_{m+1}\circ\langle m, n+1\rangle.$ 
    
    Likewise, for  $m > n$, if $\langle m, n+1\rangle\in \langle m, n\rangle\circ y_{n+1}$ and  $\langle m+1,n+1 \rangle\in x_{m+1}\circ \langle m, n+1\rangle$, 
    then there is a point, denoted $\langle m+1, n\rangle$, such that
    $\langle m+1, n\rangle\in x_{m+1}\circ \langle m, n\rangle$ and $\langle m+1,n+1\rangle\in \langle m+1, n\rangle\circ y_{n+1}.$ 
     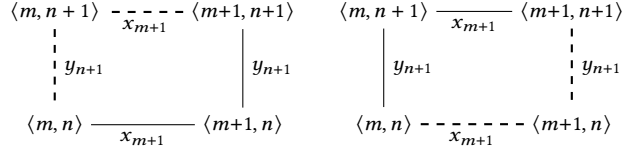
\begin{figure}
        \begin{center}
        \begin{tikzpicture}[xscale=2.5,yscale=1.5]\small
            \node (00) at (0,0){$\langle m, n\rangle$};
            \node (10) at (1,0){$\langle m{+}1,n\rangle$}edge["$x_{m+1}$"](00);
            \node (01) at (0,1){$\langle m,n+1\rangle$}edge[thick, dashed,"$y_{n+1}$"](00);
            \node (11) at (1,1){$\langle m{+}1,n{+}1\rangle$}edge["$y_{n+1}$"](10)edge[thick, dashed,"$x_{m+1}$"](01);
        \end{tikzpicture}
      \quad
        \begin{tikzpicture}[xscale=2.5,yscale=1.5]\small
            \node (00) at (0,0){$\langle m, n\rangle$};
            \node (10) at (1,0){$\langle m{+}1,n\rangle$}edge[thick, dashed,"$x_{m+1}$"](00);
            \node (01) at (0,1){$\langle m,n+1\rangle$}edge["$y_{n+1}$"](00);
            \node (11) at (1,1){$\langle m{+}1,n{+}1\rangle$}edge[thick, dashed,"$y_{n+1}$"](10)edge["$x_{m+1}$"](01);
        \end{tikzpicture}
        \caption{New points: for $m \leq n$ (left) and $m>n$ (right).}
        \label{fig:gridConstr}
        \end{center}
    \end{figure}

Note that for all points of the grid we have:
\begin{center}
    $\langle m+1,n\rangle\in x_{m+1}\circ\langle m, n\rangle$ and $\langle m,n+1\rangle\in\langle m, n\rangle\circ  y_{n+1}$.
\end{center}
Indeed, this holds within the staircase by its  construction and it is ensured across the whole grid by the recursive construction of the newly added elements.

Moreover, using repeated associativity once more, by an implicit induction, we have that for all grid points $\langle m, n\rangle$, 
$$\langle m,n\rangle\in x_{m}\circ\dots\circ x_2\circ x_1\circ s\circ y_1\circ y_2\circ \dots\circ  y_{n},$$
where for $m=0$ or $n=0$ the list of $x$'s or $y$'s is empty.
Hence 
\begin{align*}
    \langle m,n\rangle\in x\circ s \quad \text{or} \quad \langle m,n\rangle\in x\circ s\circ y \quad \text{or} \quad\langle m,n\rangle\in s\circ y,
\end{align*}
where
$x\in x_{m}\circ\dots\circ x_2\circ x_1$ and  $y\in y_1\circ y_2\circ \dots\circ  y_{n}$.
Consequently, for all grid points $\langle m, n\rangle$,
    $$\langle m, n\rangle\Vdash \alpha,$$
as $s\Vdash \textsc{c}\backslash \alpha\land \textsc{c}\backslash \alpha/\textsc{c}\land \alpha/\textsc{c}$, $x_i\Vdash \textsc{c}\land \textsc{c}\backslash \textsc{c}$ and  $y_i\Vdash \textsc{c}\land \textsc{c}\backslash \textsc{c}$, for all $i$.

We refine this further by showing that for all grid points:
\begin{align*}
        &\langle 2m,2n\rangle\Vdash \alpha_{\textsc{ee}}, \\
        &\langle 2m+1,2n\rangle\Vdash \alpha_{\textsc{oe}},\\
        &\langle 2m+1,2n+1\rangle\Vdash \alpha_{\textsc{oo}},\\
        &\langle 2m,2n+1\rangle\Vdash \alpha_{\textsc{eo}}.
\end{align*}
    \begin{figure}
        \begin{center}
        \begin{tikzpicture}[xscale=2.5,yscale=1.5]\small
            \node (00) at (0,0){$\langle2m,2n\rangle$};
            \node (10) at (1,0){$\langle2m{+}1,2n\rangle$}edge(00);
            \node (20) at (2,0){$\langle2m+2,2n\rangle$}edge(10);
            \node (01) at (0,1){$\langle2m,2n+1\rangle$}edge(00);
            \node (11) at (1,1){$\langle2m{+}1,2n{+}1\rangle$}edge(10)edge(01);
            \node (21) at (2,1){$\langle2m{+}2,2n{+}1\rangle$}edge(20)edge(11);
            \node (02) at (0,2){$\langle2m,2n{+}2\rangle$}edge(01);
            \node (12) at (1,2){$\langle2m{+}1,2n{+}2\rangle$}edge(02)edge(11);
            \node (22) at (2,2){$\langle2m{+}2,2n{+}2\rangle$}edge(12)edge(21);
            \node at (0.5,0.5){(a)};
            \node at (1.5,0.5){(b)};
            \node at (0.5,1.5){(c)};
            \node at (1.5,1.5){(d)};
            \node at (0,2.35){$\vdots$};
            \node at (1,2.35){$\vdots$};
            \node at (2,2.35){$\vdots$};
            \node at (2.5,0){$\cdots$};
            \node at (2.5,1){$\cdots$};
            \node at (2.5,2){$\cdots$};
        \end{tikzpicture}
        \caption{Constructing the full grid of points.}
        \label{fig:grid}
        \end{center}
    \end{figure}
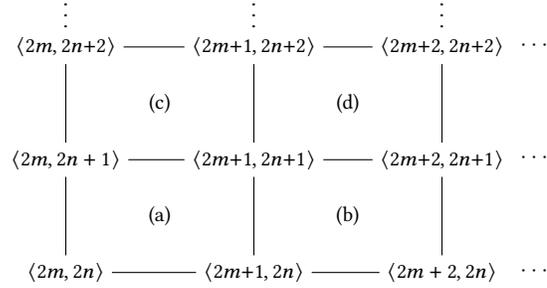
We have already established this for the staircase points, so it remains to extend it to the other points via (a)--(d) (see Fig.~\ref{fig:grid}): 
\begin{itemize}
    \item[(a)] If $\langle 2m, 2n\rangle\Vdash \alpha_{\textsc{ee}}\quad \text{and}\quad \langle 2m+1, 2n+1\rangle \Vdash \alpha_{\textsc{oo}},$\\
    then $\langle 2m+1,2n\rangle\Vdash \alpha_{\textsc{oe}}\quad \text{and} \quad \langle 2m,2n+1\rangle\Vdash \alpha_{\textsc{eo}}.$
    \item[(b)] If $\langle 2m+1, 2n\rangle\Vdash \alpha_{\textsc{oe}}\quad \text{and}\quad \langle 2m+2, 2n+1\rangle \Vdash \alpha_{\textsc{eo}},$\\
    then $\langle 2m+1,2n+1\rangle\Vdash \alpha_{\textsc{oo}}\quad \text{and} \quad \langle 2m+2,2n\rangle\Vdash \alpha_{\textsc{ee}}.$
    \item[(c)] If
    $\langle 2m, 2n+1\rangle\Vdash \alpha_{\textsc{eo}}\quad \text{and}\quad \langle 2m+1, 2n+2\rangle \Vdash \alpha_{\textsc{oe}},$\\
    then $\langle 2m,2n+2\rangle\Vdash \alpha_{\textsc{ee}}\quad \text{and} \quad \langle 2m+1,2n+1\rangle\Vdash \alpha_{\textsc{oo}}.$
    \item[(d)] If
    $\langle 2m+1, 2n+1\rangle\Vdash \alpha_{\textsc{oo}}\quad \text{and}\quad \langle 2m+2, 2n+2\rangle \Vdash \alpha_{\textsc{ee}},$\\
    then $\langle 2m+2,2n+1\rangle\Vdash \alpha_{\textsc{eo}}\quad \text{and} \quad \langle 2m+1,2n+2\rangle\Vdash \alpha_{\textsc{oe}}.$
\end{itemize}

As the proofs are analogous, we only prove (a)---actually only the first conjunct of its consequent. So suppose the antecedent of (a) holds for arbitrary $m,n$. Note that $\langle 2m, 2n\rangle\Vdash \alpha_{\textsc{ee}}$ implies that for some $t\in\mathcal{W}$, 
    $$\langle 2m, 2n\rangle\Vdash \textsc{x}\backslash \Big[\textsc{e}\textsc{e}\lor \big(\textsc{o}\textsc{e}\land \bigvee_{t'\in\mathcal{W}}^{\text{R}(t)=\text{L}(t')}t'\big)\Big]$$
Since also $\langle 2m+1, 2n\rangle\in x_{2m+1}\circ\langle 2m, 2n\rangle$ and $x_{2m+1}\Vdash \textsc{x}$, we have
    $$\langle 2m+1, 2n\rangle\Vdash \textsc{e}\textsc{e}\lor \textsc{o}\textsc{e}.$$
Now assume for contradiction that $\langle 2m+1, 2n\rangle\Vdash \textsc{e}\textsc{e}.$ Since $\langle 2m+1, 2n\rangle\Vdash \alpha$, it follows that $\langle 2m+1, 2n\rangle\Vdash \alpha_{\textsc{ee}}$. Hence $\langle 2m+1, 2n+1\rangle\in\langle 2m+1, 2n\rangle\circ  y_{2n+1}$ and $y_{2n+1}\Vdash \textsc{y}$ imply
    $$\langle 2m+1, 2n+1\rangle\Vdash \textsc{e}\textsc{e}\lor \textsc{e}\textsc{o}.$$
This is a contradiction, as $\langle 2m+1, 2n+1\rangle \Vdash \alpha_{\textsc{oo}}$ by assumption, hence $\langle 2m+1, 2n+1\rangle \Vdash \neg \textsc{e}\textsc{e}\land \neg \textsc{e}\textsc{o}$. Thus, $\langle 2m+1, 2n\rangle\nVdash \textsc{e}\textsc{e}$ whence
    $$\langle 2m+1, 2n\rangle\Vdash \textsc{o}\textsc{e},$$
which together with $\langle 2m+1, 2n\rangle\Vdash \alpha$ implies
    $$\langle 2m+1, 2n\rangle\Vdash \alpha_{\textsc{oe}},$$
as desired.

Finally, note that for all grid points $\langle m,n\rangle$, since $\langle m,n\rangle \Vdash \alpha$, there is a  $t\in \mathcal{W}$ with $\langle m, n\rangle \Vdash t$. Moreover, by the definition of the tile formulas (in particular the negations involved there), this $t$ is unique, as $\langle m,n\rangle\Vdash t$ implies $\langle m,n\rangle\not\Vdash t'$ for every $t'$ distinct from $t$. 

Thus, the function $\tau:\mathbb{N}^2\to \mathcal{W}$ given by 
    $$\tau: (m,n)\mapsto  t\in \mathcal{W} \text{ where }\langle m, n\rangle \Vdash t$$
is well-defined. To demonstrate that $\tau$ is a tiling, consider $u_{t_1}, r_{t_4}$ for a tile $t$ and an arbitrary $\langle m',n'\rangle\Vdash t$; we will establish that $\langle m',n'+1\rangle\Vdash d_{t_1}$ and $\langle m'+1,n'\rangle\Vdash l_{t_4}$. We prove only the latter, as the former is analogous. Without loss of generality we assume that $m'$ and $n'$ are even, so $m'=2m,n'=2n$. 
Then $\langle 2m, 2n\rangle\Vdash \alpha_{\textsc{ee}}$. By the observation above, at most one tile holds at $\langle 2m,2n\rangle$, so 
 $$\langle 2m, 2n\rangle\Vdash \textsc{x}\backslash \Big[\textsc{e}\textsc{e}\lor \big(\textsc{o}\textsc{e}\land \bigvee_{t'\in\mathcal{W}}^{\text{R}(t)=\text{L}(t')}t'\big)\Big],$$
i.e., the above is the disjunct of $\bigvee_{t\in\mathcal{W}}(t\land\cdots)$ in $\alpha_{\textsc{ee}}$ corresponding to~$t$. So, as $x_{2m+1}\Vdash \textsc{x}$, $\langle 2m+1,2n\rangle\in x_{2m+1}\circ\langle 2m, 2n\rangle$ and $\langle 2m+1,2n\rangle\Vdash \alpha_{\textsc{oe}}$---hence $\langle 2m+1,2n\rangle\Vdash \neg \textsc{e}\textsc{e}$---we must have 
    $$\langle 2m+1,2n\rangle\Vdash t'$$
for a $t'\in\mathcal{W}$ such that R$(t)=$ L$(t')$, exactly as required.
\end{proof}

\begin{remark}
The observant reader might be wondering why 
there are two additional occurrences of $\textsc{ee}$ inside the disjunction in
the formula $\alpha_{\textsc{ee}}$, especially as they seem to unnecessarily complicate the argument (in two places). We clarify that they could be omitted for the purposes of Lemma~\ref{l: backward2}, but they will be needed for the statement of Lemma~\ref{l: forward2} to hold.
\end{remark}

\begin{lemma}\label{l: forward2}
Let $\mathcal{W}$ be a finite set of tiles. If $\mathcal{W}$ tiles $\mathbb{N}^2$, then $(\mathcal{P}_{\omega}(\mathbb{N}), \cup, {}^c)\nvDash \phi^{}_\mathcal{W}$.
\end{lemma}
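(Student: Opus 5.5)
We construct an explicit refuting valuation on the frame $(\mathcal{P}_{\omega}(\mathbb{N}),\cup,{}^c)$, in which $u\in s\circ t$ iff $u=s\cup t$ and $N$ is complementation. Fix a tiling $\tau:\mathbb{N}^2\to\mathcal{W}$. Split $\mathbb{N}$ into the even numbers $A=\{a_1,a_2,\dots\}$ (with $a_i=2i-2$), which serve as ``$x$-steps'', and the odd numbers $B=\{b_1,b_2,\dots\}$ (with $b_j=2j-1$), which serve as ``$y$-steps''. To every finite set $Z$ assign the coordinates $\mu(Z)=(|Z\cap A|,\,|Z\cap B|)\in\mathbb{N}^2$. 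The idea is that adding a new $x$-step (resp.\ $y$-step) to $Z$ increments the first (resp.\ second) coordinate, while adding one already present changes nothing.

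The valuation is as follows.
\begin{itemize}
\item $V(\textsc{x})=\{\{a\}\mid a\in A\}$ and $V(\textsc{y})=\{\{b\}\mid b\in B\}$.
\item $V(\textsc{c})=\{Z\mid Z\neq\varnothing\}$. This set is closed under union, so every element of it satisfies $\textsc{c}\backslash\textsc{c}$. Hence $\|\textsc{x}'\|=V(\textsc{x})$ and $\|\textsc{y}'\|=V(\textsc{y})$.
\item $V(\textsc{ee})$, $V(\textsc{oe})$, $V(\textsc{oo})$ and $V(\textsc{eo})$ are the sets of $Z$ whose coordinates $\mu(Z)$ have the corresponding parities. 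These four sets partition $\mathcal{P}_{\omega}(\mathbb{N})$.
\item The colour letters are read off $\tau(\mu(Z))$. For instance, $Z\in V(u_j)$ iff $\mathrm{U}(\tau(\mu(Z)))=j$, and similarly for $d_j$, $l_j$ and $r_j$.
\item $V(p)=\varnothing$.
\end{itemize}
Because $N$ is genuine complementation, each negated literal holds exactly when the letter fails. Consequently $Z\Vdash t$ iff $\tau(\mu(Z))=t$, and $Z\Vdash\neg\textsc{oe}$ iff $Z\notin V(\textsc{oe})$, and so on.

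The main step, and the one I expect to need the most care, is to check that $Z\Vdash\alpha$ for every nonempty $Z$. By symmetry it suffices to treat the case where $\mu(Z)=(m,n)$ is even–even, showing $Z\Vdash\alpha_{\textsc{ee}}$ with the disjunct $t=\tau(m,n)$. The literal conjuncts are immediate from the partition. For $\neg(\textsc{x}'\backslash\textsc{oe}^c)$: since $Z$ is finite there is some $a\in A\setminus Z$, and then $\{a\}\cup Z$ has coordinates $(m+1,n)$, so it lies in $V(\textsc{oe})$ and not in $\|\textsc{oe}^c\|$. For $\textsc{x}\backslash[\dots]$, take any $\{a\}\in V(\textsc{x})$; there are two cases.
\begin{itemize}
\item If $a\in Z$, then $\{a\}\cup Z=Z$, which satisfies $\textsc{ee}$. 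This is exactly the case handled by the extra $\textsc{ee}$ disjunct mentioned in the Remark above.
\item If $a\notin Z$, then $\{a\}\cup Z$ has coordinates $(m+1,n)$, so it satisfies $\textsc{oe}$ together with the tile $t'=\tau(m+1,n)$. Here $\mathrm{R}(t)=\mathrm{L}(t')$ because $\tau$ is a tiling.
\end{itemize}
The conjunct $[\dots]/\textsc{y}$ is handled identically, using $b\in B$ and the vertical matching condition of $\tau$. The other three parity classes are treated in the same way.

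Finally, take the point $s=\varnothing$. Then $\{a_1\}\cup\varnothing=\{a_1\}$ lies in $V(\textsc{oe})$, which gives $s\Vdash\neg(\textsc{x}'\backslash\textsc{oe}^c)$. Every element of $V(\textsc{c})$ is nonempty, so each of $c\cup\varnothing$, $\varnothing\cup c'$ and $c\cup\varnothing\cup c'$ is nonempty and therefore satisfies $\alpha$. Hence $s$ satisfies $\textsc{c}\backslash\alpha$, $\textsc{c}\backslash\alpha/\textsc{c}$ and $\alpha/\textsc{c}$; note that $\alpha$ is never required at $\varnothing$ itself. Since $s\cup\varnothing=\varnothing\notin V(p)$, the point $\varnothing$ fails $\phi^{}_\mathcal{W}$. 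Therefore $(\mathcal{P}_{\omega}(\mathbb{N}),\cup,{}^c)\nvDash\phi^{}_\mathcal{W}$.
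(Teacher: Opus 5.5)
Your proof is correct and follows the paper's argument essentially verbatim. It uses the same valuation: singletons of evens/odds for $\textsc{x}$/$\textsc{y}$, nonempty sets for $\textsc{c}$, parities of $|Z\cap 2\mathbb{N}|$ and $|Z\cap(2\mathbb{N}+1)|$ for the four parity letters, colour letters read off $\tau$ at those coordinates, and $V(p)=\varnothing$; it also uses the same refuting point $\varnothing$ and the same two-case check ($a\in Z$ versus $a\notin Z$) for the residuated conjuncts of $\alpha$, with your upfront observation $\|\textsc{x}'\|=V(\textsc{x})$ being only a slightly tidier packaging of what the paper verifies directly for $\{0\}$.
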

\begin{proof}
     For a tiling $\tau:\mathbb{N}^2\to \mathcal{W}$, recall the  frame $(\mathcal{P}_{\omega}(\mathbb{N}), \cup, {}^c)$ from Remark~\ref{r: fincofindual}, where $\mathcal{P}_{\omega}(\mathbb{N})$ is the set of finite sets of natural numbers, $\circ$ is defined by $\cup$ and $N$ is the completementation operation $^c$, so $u\in s\circ t \iff u=s\cup t$, and $s\Vdash \neg \varphi \iff s\nVdash \varphi$. Writing $2\mathbb{N}=\{2n\mid n\in \mathbb{N}\}$ for the set of even numbers, and $2\mathbb{N}+1=\{2n+1\mid n\in \mathbb{N}\}$ for the set of odd numbers, we define a valuation $V$ as follows:
\begin{align*}
        V(p)&=\varnothing,\\
        V(\textsc{x})&=\{\{n\}\mid n\in 2\mathbb{N}\},\\
        V(\textsc{y})&=\{\{n\}\mid n\in 2\mathbb{N}+1\},\\
        V(\textsc{c})&=\{X\mid X\neq \varnothing \},\\
        V(\textsc{e}\textsc{e})&=\{X\cup Y\mid X\subseteq 2\mathbb{N}, Y\subseteq 2\mathbb{N}+1,   |X| \text{ is even}, |Y| \text{ is even}\},\\
        V(\textsc{o}\textsc{e})&=\{X\cup Y\mid X\subseteq 2\mathbb{N}, Y\subseteq 2\mathbb{N}+1,   |X| \text{ is odd}, |Y| \text{ is even}\},\\
        V(\textsc{o}\textsc{o})&=\{X\cup Y\mid X\subseteq 2\mathbb{N}, Y\subseteq 2\mathbb{N}+1,   |X| \text{ is odd}, |Y| \text{ is odd}\},\\
        V(\textsc{e}\textsc{o})&=\{X\cup Y\mid X\subseteq 2\mathbb{N}, Y\subseteq 2\mathbb{N}+1,   |X| \text{ is even}, |Y| \text{ is odd}\},\\
        V(u_i)&=\{X\cup Y\mid X\subseteq 2\mathbb{N}, Y\subseteq 2\mathbb{N}+1,\text{U}(\tau(|X|,|Y|))=i\},\\
        V(d_i)&=\{X\cup Y\mid X\subseteq 2\mathbb{N}, Y\subseteq 2\mathbb{N}+1,\text{D}(\tau(|X|,|Y|))=i\},\\
        V(l_i)&=\{X\cup Y\mid X\subseteq 2\mathbb{N}, Y\subseteq 2\mathbb{N}+1,\text{L}(\tau(|X|,|Y|))=i\},\\
        V(r_i)&=\{X\cup Y\mid X\subseteq 2\mathbb{N}, Y\subseteq 2\mathbb{N}+1,\text{R}(\tau(|X|,|Y|))=i\}.
\end{align*}
We advise the reader that even and odd numbers are employed in two different ways in this argument. They are used to encode, respectively, the first and second coordinates in the definitions of 
$V(\textsc{e}\textsc{e})$, $V(\textsc{o}\textsc{e})$, 
$V(\textsc{o}\textsc{o})$, and
$V(\textsc{e}\textsc{o})$; the parity of the cardinality of the even/odd elements in a finite set is then used to determine whether the corresponding coordinate is even or odd.
On the other hand, the intuition behind the definitions of $V(u_i)$, $V(d_i)$, $V(l_i)$, and $V(r_i)$ is to relate a finite set---uniquely partitioned as $X\cup Y$ such that $X\subseteq 2\mathbb{N}$ and $Y\subseteq 2\mathbb{N}+1$---to the tile $\tau(|X|,|Y|)$.

We proceed to show that
    \begin{align*}
        (\mathcal{P}_{\omega}(\mathbb{N}), \cup, {}^c, V), \varnothing \nVdash \phi^{}_\mathcal{W}.
    \end{align*}
    Since
$\phi^{}_\mathcal{W} \mathrel{:=}  \Big[\neg (\textsc{x}'\backslash\textsc{o}\textsc{e}^c) \land \textsc{c}\backslash \alpha\land \textsc{c}\backslash \alpha/\textsc{c}\land \alpha/\textsc{c}\Big]\backslash p$, and since
    \begin{align*}
        \varnothing\cup \varnothing=\varnothing\nVdash p,
    \end{align*}
it is enough to show that    
    \begin{align*}
        (\mathcal{P}_{\omega}(\mathbb{N}), \cup, {}^c, V), \varnothing \Vdash \neg (\textsc{x}'\backslash \textsc{o}\textsc{e}^c) \land \textsc{c}\backslash \alpha\land \textsc{c}\backslash \alpha/\textsc{c}\land \alpha/\textsc{c}.
    \end{align*}
Establishing the first conjunct amounts to showing that 
\begin{align*}
        (\mathcal{P}_{\omega}(\mathbb{N}), \cup, {}^c, V), \varnothing \nVdash \textsc{x}'\backslash \textsc{o}\textsc{e}^c.
    \end{align*}
    Since
$\textsc{x}'\mathrel{:=}\textsc{x}\land \textsc{c}\land \textsc{c}\backslash \textsc{c}$, it is enough to observe that
    $$\text{$\{0\}\Vdash \textsc{x}'$ but $\{0\}\cup \varnothing =\{0\}\nVdash \textsc{o}\textsc{e}^c$.}$$
For the first claim, observe that $\textsc{c}$ holds on any non-empty set by the definition of $V(\textsc{c})$.
Moreover, by definition, $\{0\}\Vdash \textsc{c}\backslash \textsc{c}$ iff for every $x,z\in\mathcal{P}_\omega(\mathbb{N})$: $x\Vdash\textsc{c}$ and $z=x\cup \{0\}$ implies $z\Vdash\textsc{c}$. The latter obviously holds since $z$ is non-empty. The second claim follows from the observation that $|\{0\}\cap 2\mathbb{N}|=|\{0\}|=1$ is odd and $|\{0\}\cap (2\mathbb{N}+1)|=|\varnothing|=0$ is even.

It remains to establish
\begin{align*}
        (\mathcal{P}_{\omega}(\mathbb{N}), \cup, {}^c, V), \varnothing \Vdash \textsc{c}\backslash \alpha \land \textsc{c}\backslash \alpha/\textsc{c}\land \alpha/\textsc{c}.
    \end{align*}
By the semantic clauses for $\backslash$ and $/$ (recall Definition~\ref{def-satisfaction}), coupled with the definition of $V(\textsc{c})$, it suffices to show $X\Vdash \alpha$ for each finite non-empty set $X$ of natural numbers. To this end, note that any (finite non-empty) set of natural numbers $X$ is uniquely partitioned into its even and odd parts: $X=X_e\cup X_o$, where $X_e\mathrel{:=}X\cap 2\mathbb{N}$ and $X_o\mathrel{:=}X\cap (2\mathbb{N}+1)$. Without loss of generality, we assume that both $|X_e|$ and  $|X_o|$ are even. By the valuations on these variables,
    $$X\Vdash \textsc{e}\textsc{e} \land \neg\textsc{oe}\land\neg\textsc{oo}\land\neg \textsc{eo}.$$
We proceed to establish the remaining conjuncts in $\alpha_{\textsc{ee}}$.
To obtain $X\Vdash \neg(\textsc{x}'\backslash \textsc{o}\textsc{e}^c)$, 
we consider an even number $n$ such that $n\notin X$, and argue as we did with $\{0\}$ above.
Now it remains to show
\begin{align*}    X\Vdash \bigvee_{t\in \mathcal{W}}\bigg(t&\land \textsc{x}\backslash \Big[\textsc{e}\textsc{e}\lor \big(\textsc{o}\textsc{e}\land \bigvee_{t'\in\mathcal{W}}^{\text{R}(t)=\text{L}(t')}t'\big)\Big]\land \Big[\textsc{e}\textsc{e}\lor \big(\textsc{e}\textsc{o}\land \bigvee_{t''\in\mathcal{W}}^{\text{U}(t)=\text{D}(t'')}t''\big)\Big]/\textsc{y}\bigg).
\end{align*}
Since $\tau$ is a tiling function, and $X$ is uniquely partitioned into its even part $X_e$ and odd part $X_o$, we have that $X\Vdash t$ where $t$ is (the formula corresponding to) the tile $\tau(|X_e|, |X_o|)$.
We will now establish the middle conjunct
\[
X\Vdash
\textsc{x}\backslash \Big[\textsc{e}\textsc{e}\lor \big(\textsc{o}\textsc{e}\land \bigvee_{t'\in\mathcal{W}}^{\text{R}(t)=\text{L}(t')}t'\big)\Big]
\]
(the argument for the right conjunct is analogous). By the  semantic clause for $\backslash$, it suffices to show for all $n\in\mathbb{N}$:
\[
    X\cup\{2n\}\Vdash \textsc{e}\textsc{e}\lor \big(\textsc{o}\textsc{e}\land \bigvee_{t'\in\mathcal{W}, \text{R}(t)=\text{L}(t')}t'\big)
\]
By cases, if $\{2n\}\in X$, then $X\cup\{2n\}=X\Vdash \textsc{e}\textsc{e}$. \\
Also, if $\{2n\}\notin X$, then $|X_e\cup \{2n\}|=|X_e|+1$ is odd, so 
    $$X\cup \{2n\}=(X_e \cup \{2n\})\cup X_o\Vdash \textsc{o}\textsc{e}\land \bigvee_{t'\in\mathcal{W}, \text{R}(t)=\text{L}(t')}t',$$
because $X\cup \{2n\}$ has one even number more than $X$---the cardinality of odd numbers is unchanged---and hence by the valuations on $u_i, d_i, l_i, r_i$, the set $X\cup \{2n\}$ is related to the tile $\tau(|X_e|+1, |X_o|)$. So as $\tau$ is a tiling, the right edge of $\tau(|X_e|, |X_o|)$ is equal to the left edge of $\tau(|X_e|+1, |X_o|)$. 
\end{proof}

\section{Refutation of decidability claims in the literature}\label{sec:erroneous}

Galmiche et al.~\cite{GalmicheMP05:mscs} develop semantic tableaux for BI and the finite model property and decidability is claimed in the abstract. However, the $\text{TBI}'$ tableaux system used to establish the latter is incomplete. In particular, by inspection, the BI-provable formula $p\land (q\lor r)\imp (p\land q)\lor (p\land r)$ has no closed tableau in $\text{TBI}'$. Consequently, the decidability claim is not supported by the given system. Even if this incompleteness were resolved, we concur that ``the claim in the abstract that decidability is obtained is not substantiated''~\cite{Pym-weblink-2026}.

The two other published claims of decidability use the \textit{bunched sequent calculus}~$\LBIold$ that is built from sequents $X\DC A$ where the succedent~$A$ is a BI-formula and the antecedent~$X$ is a \textit{bunch} (see Definition~\ref{d: fbs}); in particular, they make use of the fact that the calculus enjoys cut elimination (e.g., see \cite{GalatosJ}) and investigate backward proof search.
The rules of~$\LBIold$ are similar enough to the original calculus for BI (Figure~\ref{fig-LBI-calculus}) for the present discussion.

Kaminski and Francez~\cite{KaminskiFrancez16} claim that the depth of a bunch---essentially the maximum number of commas along a branch in its grammar tree---in a sequent in proof search can be bounded by the total number of multiplicative connective occurrences ($\ast$ and~$\mimp$) in the endsequent. 
However, an inspection of the crucial lemma~\cite[Lemma 53]{KaminskiFrancez16} reveals a gap: their argument for this bound is that every comma along the branch is generated by a ${\ast}_L$ or ${\mimp}_R$ rule. However, this does not rule out the possibility (see further below for an example) that multiple commas might originate from the same multiplicative occurrence through judicious use of contraction. An even simpler violation of their claim follows from the observation that
the definition of depth (denoted below by $d$) is sensitive to the parenthetical ordering of the semicolons and commas, and hence the depth can increase from conclusion to premise due to the associativity of comma.
E.g. suppose that~$\depth(Z)> \depth(X)$ and~$\depth(Z)> \depth(Y)$.
Then
$\depth((X,Y),Z)=\max(\max( \depth(X), \depth(Y) ) + 1, \depth(Z) ) + 1=\depth(Z) + 1 <
\depth(Z) + 2= \max( \depth(X) , \max( \depth(Y), \depth(Z) ) + 1) + 1  = \depth(X,(Y,Z))$.

Galatos and Jipsen~\cite{GalatosJ} define a directed graph from each bunched sequent. It is claimed that the 
\textit{multiplicative length}---the maximum (taken over all directed paths) of the number of~$\ast$'s and commas in negative position and~$\sepimp$ in positive position on a directed path---is non-increasing from the (directed graph of the) conclusion to the premise(s). For the special case of their argument dealing with BI, they treat comma as an $n$-ary connective rather than a binary connective, for otherwise the associative rule for comma (as-c) would violate their claim e.g. $((p,q),r),s\DC t$ would have greater multiplicative length than $(p,q),(r,s)\DC t$. However, treating comma as an $n$-ary connective turns out to be problematic as well: for the rule instance below left (written in the notation of this paper), the corresponding directed graphs for the conclusion and premise are given below center and right, respectively. The multiplicative length for the premise is~$2$ and for the conclusion it is~$1$, violating the claim.
\begin{center}
\begin{tabular}{c}
\AxiomC{$p,((q,r);(q,r))\DC s$}
\RightLabel{c}
\UnaryInfC{$p,q,r\DC s$}
\DisplayProof
\hspace{1cm}
\begin{minipage}{3cm}
\begin{center}
\begin{tikzpicture}[xscale=0.6, yscale=0.6, transform shape]
\node (comma) at (10,10) {$,$};
\node (p) at (8,9) {$p$};
\node (q) at (10,9) {$q$};
\node (r) at (12,9) {$r$};
\node (s) at (12,10) {$s$};

\draw[->] (comma) edge (p) edge (q) edge (r);
\draw[->] (s) edge (comma);
\end{tikzpicture}
\end{center}
\end{minipage}
\\
\\
\begin{minipage}{4cm}
\begin{center}
\begin{tikzpicture}[xscale=0.6, yscale=0.6, transform shape]
\node (comma) at (10,10) {$,$};
\node (p) at (8,9) {$p$};
\node (sc) at (10,9) {$;$};
\node (c1) at (8,8) {$,$};
\node (q1) at (7,8) {$q$};
\node (r1) at (9,8) {$r$};
\node (c2) at (12,8) {$,$};
\node (q2) at (11,8) {$q$};
\node (r2) at (13,8) {$r$};
\node (s) at (12,10) {$s$};

\draw[->] (comma) edge (p) edge (sc);
\draw[->] (sc) edge (c1) edge (c2);
\draw[->] (c1) edge (q1) edge (r1);
\draw[->] (c2) edge (q2) edge (r2);
\draw[->] (s) edge (comma);
\end{tikzpicture}
\end{center}
\end{minipage}
\end{tabular}
\end{center}

Reflecting on these attempts to decide BI,
we observe that the general idea is to control the number of multiplicative connectives/commas on a branch.
The following example shows that we cannot expect this to hold in BI. For brevity, only the premise of interest has been written down in the case of the left implication rules.
Also $x := (s  \mimp  1)$ (utilized in the middle ${\mimp}_L$ application).

\begin{center}
\AxiomC{$\cdots$}
\noLine
\UnaryInfC{$x,(x,(a \mimp ((x \mimp q) \imp c)))  \DC q$}
\RightLabel{${\mimp}_R$}
\UnaryInfC{$(x,(a \mimp ((x \mimp q) \imp c)) )  \DC x \mimp q$}
\RightLabel{${\imp}_L$}
\UnaryInfC{$((x \mimp q) \imp c) ; (x,(a \mimp ((x \mimp q) \imp c)) )  \DC q$}
\RightLabel{${\mimp}_L$}
\UnaryInfC{$(a \mimp ((x \mimp q) \imp c)) ; (x, (a \mimp ((x \mimp q) \imp c)) )  \DC q$}
\RightLabel{$1_L$}
\UnaryInfC{$(1,(a \mimp ((x \mimp q) \imp c)) ) ; (x,(a \mimp ((x \mimp q) \imp c)) )  \DC q$}
\RightLabel{${\mimp}_L$ 
}
\UnaryInfC{$(x,(a \mimp ((x \mimp q) \imp c)) ) ; (x,(a \mimp ((x \mimp q) \imp c)))  \DC q$}
\RightLabel{c}
\UnaryInfC{$x,(a \mimp ((x \mimp q) \imp c))  \DC q$}
\RightLabel{${\mimp}_R$}
\UnaryInfC{$a \mimp ((x \mimp q) \imp c)\DC x \mimp q$}
\RightLabel{${\imp}_L$}
\UnaryInfC{$(a \mimp ((x \mimp q) \imp c)) ; ((x \mimp q) \imp c)  \DC q$}
\RightLabel{${\mimp}_L$}
\UnaryInfC{$(a \mimp ((x \mimp q) \imp c)) ; (a \mimp ((x \mimp q) \imp c)) \DC q$}
\RightLabel{c}
\UnaryInfC{$a \mimp ((x \mimp q) \imp c) \DC q$}
\DisplayProof
\end{center}
By repeated use of this deduction in backward proof search, a sequent with a bunch
$x,(x,(\ldots,(x,(a \mimp ((x \mimp q) \imp c)))\ldots))$ containing a branch with arbitrarily many commas can be obtained.

\section{Proofs of Lemma~\ref{l: dual} and Lemma~\ref{l: dualBI}}\label{sec:duality}
The proof of Lemma~\ref{l: dual} follows standard arguments from (Priestley-style) duality theory (see, e.g., \cite{Urquhart1996}). For the undecidability of BI alone, the reader familiar with UDMF semantics can use Lemma~\ref{l: dualBI} instead.

\setcounter{savedcounter}{\value{theorem}}
\setcounter{savedsection}{\value{section}}

\setcounter{section}{3}
\setcounter{theorem}{3}

\begin{lemma}[Restated]
      If a formula in the language $\{\land, \lor, \neg, \backslash, \slash\}$ is refuted by a disjointive distributive residuated lattice, then it fails in a disjointive associative frame.
\end{lemma}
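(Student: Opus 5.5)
We will embed the refuting algebra into the complex algebra of a frame, using prime filters. Let $\mathbf{A}=(A,\land,\lor,\top,\bot,\neg,\cdot,\ld,\rd)$ be a disjointive distributive residuated lattice and $h$ a homomorphism from the formula algebra with $h(\varphi)\neq\top$. Let $S$ be the set of prime filters of the lattice reduct of $\mathbf{A}$ (proper, nonempty filters $F$ such that $a\lor b\in F$ implies $a\in F$ or $b\in F$). Define $\circ$ by
$$H\in F\circ G \iff \{a\cdot b\mid a\in F,\ b\in G\}\subseteq H,$$
that is, $H\supseteq F\cdot G$. For $X\subseteq S$, define $N(X):=\{F\in S\mid \exists a\,(\neg a\in F \text{ and } \widehat a\subseteq X)\}$, where $\widehat a:=\{F\in S\mid a\in F\}$. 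Then $N(\widehat a)=\widehat{\neg a}$, because $a\le b$ iff $\widehat a\subseteq\widehat b$ (prime filter theorem). Disjointness holds: if $F\in X\cap N(X)$, then $\neg a\in F$ and $F\in\widehat a$, so $a\land\neg a=\bot\in F$, which is impossible since $F$ is proper. The map $a\mapsto\widehat a$ is an injective lattice homomorphism that preserves $\neg$ by construction. It remains to check that it preserves $\cdot,\ld,\rd$ and that $\circ$ is associative. Once that is done, $\widehat{h(\cdot)}$ is the satisfaction set under the valuation $V(p)=\widehat{h(p)}$, and $\widehat{h(\varphi)}\neq S$ by injectivity, so $\varphi$ fails in the frame $(S,\circ,N)$, as required by Lemma~\ref{l: dual}.

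For the residuals we show $\widehat{a\ld b}=\{G\mid \widehat a\circ G\subseteq\widehat b\}$. The inclusion $\subseteq$ is immediate: if $a\in F$, $a\ld b\in G$ and $H\supseteq F\cdot G$, then $b\ge a\cdot(a\ld b)\in H$, so $b\in H$. For $\supseteq$, suppose $a\ld b\notin G$. Let $F={\uparrow}a$. The upward closure of $F\cdot G$ is a filter, since $\cdot$ is monotone and distributes over $\land$ only up to $\le$: $(a_1\land a_2)(g_1\land g_2)\le a_ig_i$. This filter avoids $b$, because $a\cdot g\le b$ would give $g\le a\ld b$. By the prime filter theorem there is a prime $H\supseteq F\cdot G$ with $b\notin H$; since $F\ni a$, we get $G\notin\{G\mid \widehat a\circ G\subseteq\widehat b\}$. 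The case of $\rd$ is symmetric.

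For the product we need $\widehat a\circ\widehat b=\widehat{a\cdot b}$. The inclusion $\subseteq$ is clear. For $\supseteq$, given a prime $H\ni ab$, we must find primes $F\ni a$ and $G\ni b$ with $F\cdot G\subseteq H$. This requires the standard two-step extension argument. First consider the filter ${\uparrow}b$ and the ideal $I=\{x\mid \exists y\notin H\text{-wise } \ldots\}$; concretely, maximize a filter $F\ni a$ subject to $F\cdot{\uparrow}b\subseteq H$ via Zorn's lemma, and show that a maximal such filter is prime using distributivity of $\cdot$ over $\lor$ (which holds because $\cdot$ is residuated) together with primeness of $H$. Then repeat on the other side with $F$ fixed. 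Associativity of $\circ$ on filters, $(F\circ G)\circ K=F\circ(G\circ K)$, is proved the same way. Given $H\supseteq M\cdot K$ with $M\supseteq F\cdot G$ prime, the filter generated by $G\cdot K$ satisfies $F\cdot(G\cdot K)\subseteq H$ by associativity of $\cdot$, and we extend it to a prime $M'$ with $F\cdot M'\subseteq H$ by the same maximality-plus-primeness argument. The converse direction is symmetric.

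We expect these prime-extension steps, i.e.\ the product inclusion and associativity, to be the main obstacle. They rely on the distributivity $x(y\lor z)=xy\lor xz$ to turn a maximal filter into a prime one, and on care that the ``ideal side'' of the separation is the complement of $H$, which is a prime ideal. Everything else is routine Priestley/Jónsson–Tarski bookkeeping.
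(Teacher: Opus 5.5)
Your overall plan (prime filters of the lattice reduct, $H\in F\circ G$ iff $F\cdot G\subseteq H$, a truth lemma $\|\alpha\|=\widehat{h(\alpha)}$, and Zorn-plus-primeness extensions for associativity) is the same as the paper's. However, your definition of $N$ fails. With $N(X)=\{F\mid \exists a\,(\neg a\in F \text{ and } \widehat a\subseteq X)\}$ you get $N(\widehat b)=\bigcup_{a\le b}\widehat{\neg a}$. This equals $\widehat{\neg b}$ only if $\neg$ is monotone. Your justification ``$a\le b$ iff $\widehat a\subseteq\widehat b$'' pins down the range of $a$, but says nothing about the values $\neg a$. A disjointive $\neg$ need not be monotone, and the Heyting negation is antitone. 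In any nontrivial BI-algebra we have $\neg\bot=\top$ and $\widehat\bot=\varnothing\subseteq X$, so $N(X)\supseteq\widehat{\top}=S$ for \emph{every} $X$. Two things then go wrong. First, $X\cap N(X)=X\neq\varnothing$ for every nonempty $X$, so the frame is not disjointive. Second, $N(\widehat a)\neq\widehat{\neg a}$ whenever $\neg a\neq\top$, so the $\neg$-clause of the truth lemma fails. Your disjointness argument breaks at the step from ``$F\in X$ and $\widehat a\subseteq X$'' to ``$F\in\widehat a$'', which needs the inclusion reversed. But with $X\subseteq\widehat a$ instead, $N(\widehat b)=\widehat{\neg b}$ would require $\neg$ to be antitone, which again fails for general disjointive lattices. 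The paper avoids any uniform formula. It sets $N(\widehat a):=\widehat{\neg a}$, which is well defined because $a\mapsto\widehat a$ is injective, and $N(Y):=\varnothing$ when $Y$ is not of the form $\widehat a$. Frames require nothing of $N$ beyond disjointness, so this is legitimate, and disjointness follows from $\widehat a\cap\widehat{\neg a}=\widehat{a\land\neg a}=\widehat{\bot}=\varnothing$.

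A second, smaller gap is in the residual step. ${\uparrow}a$ is generally not a prime filter, hence not a point of $S$. So finding a prime $H\supseteq {\uparrow}a\cdot G$ with $b\notin H$ does not yet show $\widehat a\circ G\not\subseteq\widehat b$. You still have to extend ${\uparrow}a$ to a prime $F$ with $F\cdot G\subseteq H$, using the Zorn-plus-primeness argument you sketch for the product (primeness of $H$ and distributivity of $\cdot$ over $\lor$). Note that ${\uparrow}a$ is proper: $\bot\backslash b=\top\in G$, so $a\backslash b\notin G$ forces $a\neq\bot$. This is exactly what the paper does. Finally, your proof of $\widehat a\circ\widehat b=\widehat{a\cdot b}$ is correct but unnecessary. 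The connective $\cdot$ does not occur in the language $\{\land,\lor,\neg,\backslash,/\}$, and the paper's truth lemma treats only these connectives.
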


\setcounter{theorem}{\value{savedcounter}}
\setcounter{section}{\value{savedsection}}

\begin{proof}
    For  a disjointive distributive residuated lattice $\mathbf{A}=(A, \land,$ $\lor, \top, \bot, \neg, \cdot, \backslash, /)$, consider the triple $(X_\mathbf{A}, \circ, N)$, where
    \begin{itemize}
        \item $X_\mathbf{A}$ is the set of prime filters of the lattice reduct of $\mathbf{A}$.\footnote{Recall that for a lattice $(L, \land, \lor)$, a prime filter $x$ is a non-empty 
 proper subset $\varnothing\neq x\subsetneq L$ such that (i) $a\land b\in x$ iff $a\in x$ and $b\in x$, and (ii) $a\lor b\in x$ iff $a\in x$ or $b\in x$.}
        \item $N:\mathcal{P}(X_\mathbf{A})\to \mathcal{P}(X_\mathbf{A})$ is given by \[N(Y)\mathrel{:=}\begin{cases}
            \{x\in X_\mathbf{A}\mid \neg a \in x\} & \text{if $Y=\{x\in X_\mathbf{A}\mid a\in x\}$ for some $a\in A$}\\
            \varnothing & \text{otherwise.}
        \end{cases}
        \]
        \item $\circ :X_\mathbf{A}\times X_\mathbf{A}\to \mathcal{P}(X_\mathbf{A})$ is given by $x\circ y\mathrel{:=}\{z\in X_\mathbf{A}\mid \forall a\in x, b\in y\,{:}\; a\cdot b\in z\}$.
    \end{itemize} 
    For $x,y \in X_\mathbf{A}$, we define  $xy\mathrel{:=}\{a  \cdot b\mid a \in x, b\in y\}$, so $x \circ y=\{z\in X_\mathbf{A}\mid xy\subseteq  z\}$.
    We claim that $(X_\mathbf{A}, \circ, N)$ is a disjointive associative frame. First, note that $N$ is well-defined, since the function $A\ni a\mapsto \{x\in X_\mathbf{A}\mid a\in x\}$ is injective by the prime filter theorem, i.e., $\{x\in X_\mathbf{A}\mid a\in x\}=\{x\in X_\mathbf{A}\mid b\in x\}$ implies $a=b$. Second, to see that $N$ is a disjointive operation, simply observe that $\{x\in X_\mathbf{A}\mid a\in x\}\cap \{x\in X_\mathbf{A}\mid \neg a\in x\}=\varnothing$: if $a, \neg a \in  x\in X_\mathbf{A}$, then $\bot=a\land \neg a \in x$, contradicting the fact that $x$ is proper. 
    
    Third and last, we show that $(x\circ y)\circ z= x \circ (y\circ z)$. Whenever $w\in (x\circ y)\circ z$, i.e., if there is a prime filter $u$ such that $u \in x\circ y$ and $w\in u\circ z$ (i.e., such that $xy \subseteq u$ and $uz \subseteq w$), we have to show that $w\in x\circ (y\circ z)$; that is, that there is a prime filter $v$ such that $w\in x\circ v$ and $v\in y\circ z$ (i.e., such that $xv \subseteq w$ and $yz \subseteq v$). Note that for a filter $v$ the condition $yz \subseteq v$ is equivalent to $\langle yz \rangle \subseteq v$, where $\langle yz \rangle$ denotes the filter generated by $yz$;
    accordingly, given $w\in (x\circ y)\circ z$, we consider the set 
        $$F\mathrel{:=}\{v' \text{ is a proper filter on $\mathbf{A}$} \mid xv'\subseteq w\text{ and } yz\subseteq v'\}$$
    and will apply Zorn's lemma to obtain the desired prime filter $v$. To show that $F$ is not empty, we argue that the filter $\langle yz \rangle$ is  proper and that  $x\langle yz \rangle \subseteq w$.
    It is proper, as otherwise $\bot \in\langle yz \rangle$ would imply that $\bot$ would be above the meet of some elements of $yz$, i.e., that there were $b_1, \hdots, b_n\in y, c_1, \hdots, c_n\in z$ with $\bigwedge_{1\leq i\leq n}b_i\cdot c_i=\bot$. This would lead to the contradiction $\bot \in w$, since $\left(\top\cdot \bigwedge_{1\leq i\leq n} b_i\right)\in xy \subseteq u$ and
    \begin{align*}
    w&\supseteq uz\ni \left(\top\cdot \bigwedge_{1\leq i\leq n} b_i\right)\cdot \bigwedge_{1\leq i\leq n}c_i\overset{\text{(as.)}}{=}\top\cdot \left(\bigwedge_{1\leq i\leq n} b_i\cdot \bigwedge_{1\leq i\leq n}c_i\right)\overset{\text{(mon.)}}{\leq} \top\cdot \bigwedge_{1\leq i\leq n}b_i\cdot c_i=\top \cdot \bot=\bot,
    \end{align*}
    where $^{\text{(mon.)}}$ refers to monotonicity of $\cdot$, which together with the last equality follow by the residuation property.
    To see that $x\langle yz \rangle \subseteq w$, note that  $x yz  \subseteq uz \subseteq w$, so  $x\langle yz \rangle \subseteq \langle xyz \rangle \subseteq \langle w\rangle = w$. 
    
    Thus $F$ is non-empty. So, as $\bigcup v'_i\in F$ for every chain $v'_1\subseteq v_2'\subseteq \cdots$ of elements of $F$, it follows by Zorn's lemma that the poset $(F, \subseteq)$ has a maximal element $v\in F$. As $v\in F$, it remains to show that $v$ is prime; we assume $d\notin v$  and $e\notin v$, and we will show $d \jn e\notin v$.  
    We have $yz \subseteq v \subseteq \langle v\cup\{d\}\rangle$ and, by maximality, $\langle v\cup\{d\}\rangle \notin F$, so $x\langle v\cup\{d\}\rangle \not \subseteq w$ or $\langle v\cup\{d\}\rangle$ is  not proper, in which case $\bot\in \langle v\cup\{d\}\rangle$ hence $\bot \in x \{\bot\} \subseteq x\langle v\cup\{d\}\rangle$; since $\bot \notin w$, in both cases we get $x\langle v\cup\{d\}\rangle \not \subseteq w$, i.e., there are $d_x\in x$ and $d_v\in v$ s.t. $d_x\cdot (d_v\land d)\notin w$. Likewise, there are $e_x\in x$ and $e_v\in v$ s.t. $e_x\cdot (e_v\land e)\notin w$.
    So by additivity of $\cdot$ and primeness of $w$,
    \begin{align*}
        (d_x\land e_x)\cdot [(d_v\land e_v)\land (d\lor e)]&= (d_x\land e_x)\cdot [(d_v\land e_v)\land d]\lor (d_x\land e_x)\cdot [(d_v\land e_v)\land e]\\
        &\leq d_x\cdot (d_v\land d)\lor e_x\cdot (e_v\land e)\notin w.    \end{align*}
    So as $d_x\land e_x\in x$ and $v\in F$, we have $(d_v\land e_v)\land (d\lor e)\notin v$, whence as $d_v\land e_v\in v$, we have $d\lor e\notin v$, as required. This proves $(x\circ y)\circ z\subseteq x \circ (y\circ z)$. The converse is proven analogously. We therefore conclude that $(X_\mathbf{A}, \circ, N)$ is a disjointive associative frame.

    Now let $\varphi$ be a formula in the language $\{\land, \lor, \neg, \backslash, \slash\}$, and assume $\mathbf{A}\nvDash \varphi$, i.e., that there is a homomorphism $h$ from the formula algebra to $\mathbf{A}$ such that $h(\varphi)\neq \top$. Then $\mathfrak{M}=(X_\mathbf{A}, \circ, N, V_h)$ is a disjointive associative model where
        $$V_h(p)\mathrel{:=}\{x\in X_\mathbf{A}\mid h(p)\in x\}.$$
An induction shows that for all $x\in X_\mathbf{A}$ and formulas $\alpha$ in the language $\{\land, \lor, \neg, \backslash, \slash\}$,
\[
    \|\alpha\|=\{x\in X_\mathbf{A}\mid h(\alpha)\in x\}.
\]
The base case is by definition; $\lor, \land$ follow by defining properties of prime filters; and $\neg$ is by definition, as $\|\neg \alpha\|=N(\|\alpha\|)\overset{\text{IH}}{=}N(\{x\in X_\mathbf{A}\mid h(\alpha)\in x\})=\{x\in X_\mathbf{A}\mid \neg h(\alpha) \in x\}=\{x\in X_\mathbf{A}\mid h(\neg\alpha) \in x\}$. Lastly, `$\supseteq$' of the inductive step for the residuals follow easily, but the converse is less trivial, so we cover it here for $\backslash$.

Accordingly, suppose by way of contraposition 
that $h(\alpha)\backslash h(\beta)=h(\alpha\backslash\beta)\notin y$; we show that $y\nVdash \alpha\backslash \beta$, i.e., we find prime filters $x,z$ such that $z\in x\circ y$, $x\Vdash \alpha$ and $z\nVdash \beta$. We define the filter
$$z_0\mathrel{:=}\langle h(\alpha)y \rangle=\{a\in A\mid \exists b\in y\,{:}\; a\geq h(\alpha)\cdot b\}.$$ Note that $h(\beta)\notin z_0$, because otherwise we would have $h(\alpha)\cdot b\leq h(\beta)$ for some $b\in y$, whence by residuation $y\ni b\leq h(\alpha)\backslash h(\beta)$, which would contradict $h(\alpha)\backslash h(\beta)\notin y$. So by the prime filter theorem, there is $z\in X_\mathbf{A}$ such that $z_0\subseteq z\not\ni h(\beta)$, whence by induction hypothesis $z\nVdash \beta$. Next, observe that $x_0\mathrel{:=}\{a\in A\mid a\geq h(\alpha)\}$ is a proper filter: it is clearly a filter, and it is proper because (i) $\top\leq \bot\backslash h(\beta)$ holds by residuation and (ii) $h(\alpha)\backslash h(\beta)\notin y$. Hence by the same Zorn's lemma argument as before, we get a prime filter $x\supseteq x_0$ such that $x\circ y\ni z$. Consequently, as $h(\alpha)\in x_0\subseteq x$ entails by induction hypothesis that $x\Vdash \alpha$, we have $y\nVdash \alpha\backslash \beta$, as desired.

Finally, since $h(\varphi)\neq \top$, there is a prime filter $x$ s.t. $h(\varphi)\notin x$, whence $x\nVdash \varphi$ so $\mathfrak{M}\nvDash \varphi$, completing the proof of the lemma.
\end{proof}
\begin{remark}
     The use of choice principles in the prior argument (used to construct prime filters from proper filters) may be a cause of concern to readers who might then wonder whether the undecidability of BI (and, more generally, Theorem~\ref{t: main}) could be independent of ZF. Such worries can be defused. In a nutshell, the situation here is all the same as that of ZF and first-order logic: ZF proves completeness and compactness of first-order logic for \textit{countable} languages (as they come well-ordered without choice), but not for \textit{arbitrary} languages. 
     
     In more detail, if a formula in the language $\{\land, \lor, \neg, \backslash, \slash\}$ is refuted by a disjointive distributive residuated lattice, then it is, in particular, refuted by the Lindenbaum-Tarski algebra over our language. As our language can be well-ordered without choice, so can this algebra. Hence the extensions of proper filters to prime filters in the previous lemma can---for this specific algebra---be done choice-free via the usual recursive construction of Lindenbaum's lemma (the latter is possible precisely because we have a well-order). 
     In other words, choice is only needed for representation à la Stone, not for completeness.
\end{remark}

\setcounter{savedcounter}{\value{theorem}}
\setcounter{savedsection}{\value{section}}

\setcounter{section}{3}
\setcounter{theorem}{4}

\begin{lemma}[Restated]
    If a formula in the language $\{\land, \lor, \neg, \mimp\}$ is refuted by a BI-algebra, then it fails in a disjointive associative frame.
\end{lemma}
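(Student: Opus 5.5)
The quickest route is to reduce this restatement to the previous lemma. Let $\mathbf{A}$ be a BI-algebra that refutes a formula $\varphi$ in the language $\{\land, \lor, \neg, \mimp\}$, so that $h(\varphi)\neq\top$ for some homomorphism $h$.

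\emph{Step 1: pass to a disjointive distributive residuated lattice.} We form the reduct $\mathbf{A}'=(A,\land,\lor,\top,\bot,\neg,\ast,\ld,\rd)$, where $\neg x:=x\to\bot$, and set $x\ld y:=x\mimp y$ and $y\rd x:=x\mimp y$. Since $\ast$ is commutative, both divisions coincide with $\mimp$, and the residuation law $x\ast y\leq z$ iff $y\leq x\mimp z$ iff $x\leq y\mimp z$ holds. The lattice reduct of a Heyting algebra is bounded and distributive, and $(A,\ast)$ is a semigroup. Disjointivity holds because $x\land(x\to\bot)\leq\bot$. Hence $\mathbf{A}'$ is a disjointive distributive residuated lattice.

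\emph{Step 2: transfer the refutation.} We translate $\varphi$ into $\varphi'$ by replacing each subformula $\alpha\mimp\beta$ with $\alpha\ld\beta$. On $\mathbf{A}'$ the homomorphism $h$ induces a homomorphism with $h(\varphi')=h(\varphi)\neq\top$, so $\mathbf{A}'$ refutes $\varphi'$. Lemma~\ref{l: dual} then gives a disjointive associative frame $\mathfrak{F}$ and a valuation under which $\varphi'$ fails.

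\emph{Step 3: read off the failure of $\varphi$.} We interpret $\mimp$ on frames via the $\ld$ clause, which is the natural reading in the commutative setting. Under this reading $\|\varphi\|=\|\varphi'\|$, so $\varphi$ fails in $\mathfrak{F}$.

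The main point of care is whether $\mimp$ in a frame must be read as $\ld$ or might instead be read as $\rd$. If it should be $\rd$, or must be independent of that choice, I would check that the canonical frame $(X_{\mathbf{A}'},\circ,N)$ from the proof of Lemma~\ref{l: dual} is commutative, meaning $x\circ y=y\circ x$. This follows directly from commutativity of $\ast$, since $xy=yx$ as sets of products. On a commutative frame the $\ld$ and $\rd$ clauses agree, so the choice does not matter.

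For readers who want a UDMF-style argument, I would go further and verify on the same prime-filter frame that the BI-specific structure (the unit $1$ and commutativity) is respected. That is not needed for the statement as given.
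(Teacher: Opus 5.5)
Your proof is correct, but it takes a different route from the paper's. The paper does not pass through Lemma~\ref{l: dual} here. It starts from $\nvdash\top\DC\varphi$ and invokes the external completeness of BI for upwards and downwards closed monoidal models. It then forgets $\preccurlyeq$ and $E$ and sets $N(X):=({\downarrow}X)^c$. Disjointness follows from reflexivity of $\preccurlyeq$, associativity is one of the UDMF conditions, and the clauses for $\land,\lor,\mimp$ already match; since the UDMF operation $\circ$ is commutative, the $\ld$ and $\rd$ readings of $\mimp$ agree automatically. You instead stay entirely algebraic: you pass to the commutative reduct $\mathbf{A}'$, run the prime-filter construction of Lemma~\ref{l: dual}, and observe that $xy=yx$ makes the canonical frame commutative. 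Your route is self-contained within the paper and shows that Lemma~\ref{l: dualBI} is a corollary of Lemma~\ref{l: dual}. The paper's route lets readers who know UDMF semantics bypass the Priestley-style duality (and its use of Zorn's lemma), at the cost of relying on a completeness theorem proved elsewhere. One point to make explicit: commutativity is not an optional refinement but what makes the lemma usable for BI. The BI version of $\phi^{}_\mathcal{W}$ turns both $\ld$ and $\rd$ into $\mimp$, while Lemma~\ref{l: backward2} is about $\phi^{}_\mathcal{W}$ itself. So each occurrence of $\mimp$ must be readable as $\ld$ or as $\rd$ according to its origin. Since the \emph{statement} of Lemma~\ref{l: dual} does not guarantee this, Step~2 should say outright that $\mathfrak{F}$ is the canonical frame $(X_{\mathbf{A}'},\circ,N)$, not an arbitrary frame supplied by the lemma.
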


\setcounter{theorem}{\value{savedcounter}}
\setcounter{section}{\value{savedsection}}

\begin{proof}
    Suppose $\nvdash \top\DC \varphi$. Then by completeness,\footnote{See, e.g., \cite{DochPym2019} on upwards and downwards closed monoidal models and frames. 
    } there is an upwards and downwards closed monoidal model $(S, \preccurlyeq, \circ, E, V)$ and $s\in S$ such that $(S, \preccurlyeq, \circ, E, V), s\nVdash \varphi$. We claim that $\mathfrak{M}\mathrel{:=}(S, \circ, N, V)$, where $N(X)\mathrel{:=}({\downarrow} X)^c\mathrel{:=}\{s\in S\mid \exists s'\in X\text{ s.t. $s\preccurlyeq s'$}\}^c$, is a disjointive associative model. Rather than listing every condition imposed on $(S, \preccurlyeq, \circ, E, V)$, it is enough to note that $\circ: S\times S\to \mathcal{P}(S)$ is associative; $\preccurlyeq$ is reflexive; and the valuation $V$ is, in particular, a function $V:P\to\mathcal{P}(S)$. Further, the clause for $\neg$ within this semantics is the intuitionistic negation $\|\neg\alpha\|\mathrel{:=}({\downarrow}\|\alpha\|)^c$, hence $\|\alpha\|\cap \|\neg\alpha\|=\varnothing$ (because $\preccurlyeq$ is reflexive). It follows that $\mathfrak{M}=(S, \circ, N, V)$ is a disjointive associative model. Consequently, since the clauses for $\land, \lor, \sepimp$ within this BI semantics coincide with ours, we get that $\mathfrak{M}, s\nVdash\varphi$, as desired.
\end{proof}

\begin{acks}
    Knudstorp was supported in part by the MOSAIC project (H2020-MSCA-RISE-2020, 101007627), and in part by the Nothing is Logical (NihiL) project (NWO OC 406.21.CTW.023).
Ramanayake was supported in part by the Austrian Science Fund (FWF) project P33548, and the Dutch Research Council (NWO) project OCENW.M.22.258.

\end{acks}

\bibliographystyle{IEEEtranS}%
\bibliography{sn-bibliography}

\appendix

\section{Reduction from acceptance in ACMs}\label{sec-ACM-reduction}

We establish the undecidability of BI via a reduction from the acceptance problem for And-branching Counter Machines (ACMs). The same problem was used to show the undecidability of propositional linear logic~\cite{LinMitSceSha1992}; meanwhile, the acceptance problem for expansive ACMs was used to establish a non-primitive recursive lower bound for $\m {FL_{ec}}$~\cite{Urquhart1999}, also known as \textbf{LR}. 
The proof here exhibits two novelties in comparison: (i)~a novel encoding is introduced to tame the structurally richer intuitionistic contraction, in order to replicate machine instructions on demand, and (ii)~the completeness argument---i.e., reading off a computation tree from a proof of a BI-theorem that is an instance of the reduction---uses a semantic argument via algebraic residuated frames (allowing us to avoid an involved proof-theoretic case analysis).

This appendix is self-contained.
ACMs are introduced in Section~\ref{subsec-ACMs}. The reduction function, and the simulation of a computation as a proof in BI (soundness of the reduction) is presented in Section~\ref{subsec-reduction-fn}.
A review of residuated frames and an algebraic presentation of ACMs appears in Section~\ref{subsec-review-RF}.
We conclude with the residuated frames argument for soundness (Section~\ref{subsec-soundness}) and completeness (Section~\ref{subsec-completeness}).

\subsection{And-branching counter machines (ACM)}\label{subsec-ACMs}

Following a slight notational adaptation of~\cite{LinMitSceSha1992,Urquhart1999}, a $k$-ACM $M=(\States, R_k,\Inst, q_f)$ is a tuple such that $\States$ is a finite set of states, $R_k=\{r_1,\ldots,r_k\}$ is a set of $k$ propositional variables, $\Inst$ is a finite set of \emph{instructions},
and $q_f\in\States$ is a distinguished final state. 
Every instruction is an \emph{increment} $q\mimp (q' \ast r_i)$, a \emph{decrement} $(q\ast r_i)\mimp q'$, or a fork $q\mimp (q'\lor q'')$, for $\{q,q',q''\}\subseteq \States$ and $r_i\in R_k$.

A \emph{configuration} of $M$ is a formula $\config{q}{R}$ where $q\in Q$ and $R$ is a finite product of variables from $R_k$, taken up to commutativity, or the unit $1$. 
We sometimes write $q1$ as $q$.
The number of occurrences of $r_i$ in $R$ is interpreted as the value of the $i$-th register. 
Later on, we encode a configuration in a sequent as the multiset that is obtained from the occurrences of each propositional variable in $qR$. We write $r^n$ to mean the product of $n$ copies of $r$. There is an obvious bijective translation between these notations.

A \emph{computation tree} is defined in the usual way as a tree with nodes labelled by configurations, such that the labels of the children (zero, one or two, at each node) are those obtained by applying one of the instructions to the label of their parent.
Specifically, if a node labelled $C$ has a single child labelled $C'$ (a fact that we denote $C\rightsquigarrow C'$), then either \begin{description}
\item[(increment)] $C=\config{q}{r_1^{n_1}\cdots r_i^{n_i}\cdots r_\regnum^{n_\regnum}}$, $C'=\config{q'}{r_1^{n_1}\cdots r_i^{n_i+1}\cdots r_{\regnum}^{n_\regnum}}$, for some $q\mimp (q'\ast r_i)\in\Inst$, or

\item[(decrement)] $C=\config{q}{ r_1^{n_1}\cdots r_i^{n_i+1}\cdots
r_\regnum^{n_\regnum}}$, $C'=\config{q'}{r_1^{n_1}\cdots r_i^{n_i}\cdots
r_n^{n_\regnum}}$, for some $(q\ast r_i)\mimp q'\in\Inst$,
\end{description}
and if a node labelled $C$ has two children labelled $C'$ and $C''$ (which we denote $C\rightsquigarrow C'\lor C''$), then
\begin{description}
\item[(fork)] $C=\config{q}{R}$, $C'=\config{q'}{R}$,  $C''=\config{q''}{R}$,  for some  $q\mimp (q'\lor q'')\in\Inst$.
\end{description}

An ACM \emph{accepts} a configuration~$C$ if there is a computation tree such that its root is labelled $C$ and every leaf
is labelled $\config{q_f}{}$.
\begin{theorem}[Lincoln et al, 1992~\cite{LinMitSceSha1992}]
There is a $2$-ACM for which acceptance is undecidable.
\end{theorem}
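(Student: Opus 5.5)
We reduce the halting problem for deterministic two-counter (Minsky) machines, which is undecidable, and we can even fix a single universal two-counter machine $U$ whose halting problem on input configurations is undecidable. Such a machine has two kinds of instructions: $q\colon r_i{+}{+};\ \text{goto } q'$, and $q\colon \text{if } r_i=0 \text{ goto } q' \text{ else } (r_i{-}{-};\ \text{goto } q'')$. It also has a halting state $h$.

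From $U$ we build the $2$-ACM $M_U=(\States,\{r_1,r_2\},\Inst,q_f)$. Its states are those of $U$ together with fresh states $z_1,z_2$ and a fresh final state $q_f$. Its instructions are as follows.
\begin{itemize}
\item Each increment $q\colon r_i{+}{+};\ \text{goto } q'$ becomes $q\mimp(q'\ast r_i)$.
\item Each test instruction becomes two instructions: the decrement $(q\ast r_i)\mimp q''$ for the nonzero case, and the fork $q\mimp(q'\lor z_i)$ for the zero case.
\item The zero-checking state $z_i$ (with $j\neq i$) gets the loop $(z_i\ast r_j)\mimp z_i$ and the exit $z_i\mimp(q_f\lor q_f)$.
\item The halting state gets erasing loops $(h\ast r_1)\mimp h$ and $(h\ast r_2)\mimp h$ and the exit $h\mimp(q_f\lor q_f)$.
\end{itemize}
A leaf must be labelled exactly $q_f$, with all registers empty. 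Hence the $z_i$ branch of a fork has an accepting subtree iff $r_i=0$ at that point: $z_i$ can erase $r_j$ but never $r_i$, and once we leave $z_i$ no instruction applies except at $q_f$ with empty registers. For the same reason, configurations labelled $h$ are always accepted.

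Soundness. If $U$ halts from $\config{q}{r_1^{a}r_2^{b}}$, we build an accepting tree by induction on the length of the run.
\begin{itemize}
\item Increments and successful decrements are mirrored directly.
\item A zero-test taken with $r_i=0$ is mirrored by the fork. Its $z_i$-child erases $r_j$ and then forks into $q_f$ leaves; its $q'$-child continues the simulation.
\item The final $h$-configuration is erased and closed off with $q_f$ leaves.
\end{itemize}

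Completeness. Conversely, take an accepting computation tree with root $\config{q}{r_1^{a}r_2^{b}}$. We show by induction on its height that $U$ halts from that configuration; this is the step needing the most care. In a non-$h$ state $q$ of $U$, the only applicable ACM instructions are those generated from $q$'s unique $U$-instruction, so there are three cases.
\begin{itemize}
\item If $q$'s instruction is an increment, the root has a single child, which is the next configuration of $U$.
\item If $q$'s instruction is a test and the root uses the decrement, then $r_i>0$, so $U$ also takes its nonzero branch and moves to the same configuration.
\item If the root uses the fork, the $z_i$-subtree is accepting, so by the observation above $r_i=0$. Hence $U$ takes its zero branch, and the $q'$-subtree is an accepting tree for $U$'s actual successor.
\end{itemize}
In each case the induction hypothesis applies to a strictly shorter tree, and at $h$ the machine $U$ has halted. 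The main obstacle is exactly this faithfulness argument: one must check that the ACM's extra nondeterminism (the choice between fork and decrement, and the auxiliary states) creates no spurious accepting trees. That follows from the rigidity of the leaf condition, which forces all registers to be empty at $q_f$.

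Since $U$ halting from an input configuration is undecidable, acceptance for the fixed $2$-ACM $M_U$ is undecidable.
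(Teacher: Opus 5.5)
Your reduction is correct. Forking into a zero-checking state $z_i$, which can only erase the other register before reaching $q_f$, makes the $z_i$-branch accepting exactly when $r_i=0$, so the ACM's extra nondeterminism creates no spurious accepting trees; and fixing a universal two-counter machine correctly yields a single $2$-ACM. The paper gives no proof of its own and only cites Lincoln et al., whose argument is essentially this simulation of Minsky machines with zero-tests implemented by forks.
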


\subsection{Reduction from ACMs (soundness of the encoding)}\label{subsec-reduction-fn}

Given an ACM~$M=(\States, R_k,\Inst, q_f)$, we define
$$
i:= \bigwedge I, \qquad
t:= (i \mimp q_f)\ra q_f,\qquad
\theta:= (\top \mimp t)\mt 1.$$
Let $\config{q}{R}$ be any configuration. We show the following reduction.
\[
\text{
$M$ accepts $\config{q}{R}$
iff
$q,R,\theta\Ra q_f$ is provable in BI}
\]
Provability is defined using the usual sequent calculus LBI~\cite{Pym02:book,GalmicheMP05:mscs}.
We freely use the cut-elimination theorem for LBI~\cite{Pym02:book,GalatosJ}
which states that every provable sequent has a cut-free proof.

The left-to-right direction (soundness) uses proof rules to simulate machine instructions, and it motivates the definition of $\theta$.
Specifically, let~$\mathcal{T}$ be a computation tree with root labelled $\config{q}{R}$ and every leaf labelled $\config{q_f, 1}$.
Obtain an LBI proof by induction on the height of $\mathcal{T}$. Base case: $\mathcal{T}$ is a single node labelled $\config{q_f}{}$.
\begin{center}
\AxiomC{}
\UnaryInfC{$q_f\Ra q_f$}
\RightLabel{$\equiv$}
\UnaryInfC{$q_f,\varnothing_\times\Ra q_f$}
\RightLabel{$1_L$}
\UnaryInfC{$q_f,1\Ra q_f$}
\RightLabel{w}
\UnaryInfC{$q_f, ((\top\mimp t);1)\Ra q_f$}
\RightLabel{$\land_L$}
\UnaryInfC{$q_f,\theta\Ra q_f$}
\DisplayProof
\end{center}


Inductive case. 
Let $R=r_1^{n_1}\cdots r_\regnum^{n_\regnum}$.
Suppose the last instruction $T_1\mimp T_2$ in $\mathcal{T}$ is a decrement $(q.r_1)\mimp q'$  (the other cases are similar).
Thus, $\config{q}{R}\rightsquigarrow\config{q'}{R'}$ and acceptance of $\config{q'}{R'}$ is witnessed by the strict subtree~$\mathcal{T}_0$.
It follows that $R'=r_1^{n_1-1}r_2^{n_2}\cdots r_\regnum^{n_\regnum}$.
It suffices to obtain a deduction of $q,R,\theta\Ra q_f$ from 
$q',R',\theta\Ra q_f$ (this can be seen as the proof-theoretic gadget implementing $(q.r_1)\mimp q'$).
\begin{center}
\begin{scriptsize}
\AxiomC{$q,R\Ra \top$\hspace{-2cm}}
\AxiomC{$q,r_1\Ra q.r_1$}
\AxiomC{$	q',R',\theta\Ra q_f$}
\dashedLine
\UnaryInfC{$q',r_1^{n_1 - 1},r_2^{n_2},\ldots,r_\regnum^{n_\regnum},\theta\Ra q_f$}
\RightLabel{$\mimp_L$}
\BinaryInfC{$q,r_1^{n_1},\ldots,r_\regnum^{n_\regnum},\theta,(q.r_1)\mimp q'\Ra q_f$}
\RightLabel{repeated $\land_L$, w}
\UnaryInfC{$q,R,\theta, \land\Inst \Ra q_f$}
\RightLabel{$\mimp_R$}
\UnaryInfC{$q,R,\theta\Ra \land\Inst\mimp q_f$}
\AxiomC{$q_f\Ra q_f$}
\RightLabel{$\imp_L$}
\BinaryInfC{$((\land\Inst\mimp q_f)\imp q_f; q,R,\theta\Ra q_f$}
\RightLabel{$\mimp_L$}
\BinaryInfC{$q,R,\top\mimp\left(\left(\land\Inst\mimp q_f\right)\imp q_f\right); q,R,\theta\Ra q_f$}
\RightLabel{$\land_L$}
\UnaryInfC{$q,R,\left(\top\mimp\left(\left(\land\Inst\mimp q_f\right)\imp q_f\right)\right)\land 1; q,R,\theta\Ra q_f$}
\dashedLine
\UnaryInfC{$q,R,\theta; q,R,\theta\Ra q_f$}
\RightLabel{c}
\UnaryInfC{$q,R,\theta\Ra q_f$}
\DisplayProof
\end{scriptsize}
\end{center}
Every leaf in the above is provable; in particular, $q',R',\theta\Ra q_f$ is provable by IH since $\mathcal{T}_0$ has strictly smaller height than $\mathcal{T}$.

For the right-to-left direction (completeness), the proof-theoretic approach must establish that \emph{any} cut-free proof of $q,R,\theta\Ra q_f$ corresponds to a computation tree accepting $\config{q}{R}$. If the proof ends as the deduction above, then we simply extract a computation from provability of $q',R',\theta\Ra q_f$ via IH, and prepend the instruction $(q.r_1)\mimp q'\in\Inst$. However, proofs in BI are more liberal than computations, so there are many other ways for the proof to conclude. Indeed, the situation is more complicated here than for linear logic or FLec, since BI is structurally richer. E.g., the proof may conclude with a sequence of contraction and weakening rules.
\begin{center}
\AxiomC{$X\Ra q_f$}
\noLine
\UnaryInfC{sequence of $w$ and $c$ rules}
\noLine
\UnaryInfC{$q,R,\theta\Ra q_f$}
\DisplayProof
\end{center}
All sequences are not possible here, since e.g., $\Ra q_f$ is not provable. So, what is required is a characterization of the possible bunches~$X$, and a generalized interpretation of acceptance on $X\Ra q_f$. The ensuing complications motivate an elegant alternative, via a semantic interpretation of the sequent in terms of residuated frames.

\subsection{Soundness via (G)BI-algebras}
\label{subsec-soundness}

We prove the undecidability for BI and its non-commutative version GBI at the same time. The latter uses both divisions~$\rd$ and $\ld$. The proof for the BI case can be obtained by uniformly replacing the two divisions by $\mimp$, or by noting that the two divisions are equivalent in the presence of commutativity. The result in fact applies to all logics between BI and GBI, and even further (Corollaries~\ref{c: UndInt} and \ref{c: (G)BI}).

Given a $k$-ACM $\texttt{M}=(Q, R_k, I, q_f)$, write the instructions of $I$ using right division as $d' \rd d$ (instead of $d\mimp d'$), and update the terms defined before as follows, using left division.
$$
i:= \bigwedge I, \qquad
t:= (i \ld q_f)\ra q_f,\qquad
\theta:= (\top \ld t)\mt 1.$$
We write $c \rightsquigarrow c_1 \jn \cdots \jn c_m$ if there is a computation tree with root $c$ and multiset of leaves equal to $\{c_1, \ldots, c_m\}$. Also, we write $\overline{q_f}$ for any finite (non-idempotent) join $q_f \jn \cdots \jn q_f$ of $q_f$'s.

\begin{lemma}\label{l: soundness} If $c \rightsquigarrow \overline{q_f}$ then $\mathsf{GBI} \models c\theta \leq q_f$, for every ACM $\texttt{M}=(Q, R_k, I, q_f)$ and configuration $c$.
\end{lemma}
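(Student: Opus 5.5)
We prove the statement by induction on the height of the computation tree witnessing $c \rightsquigarrow \overline{q_f}$, working purely equationally in an arbitrary GBI-algebra. Configurations $c = qR$ are read as products $q\cdot r_{i_1}\cdots r_{i_n}$. Since GBI-algebras need not be commutative, we fix a canonical ordering of the factors in $R$ and keep $\theta$ on the right; we must then ensure that instructions can act on the relevant factors. The needed facts about $\theta$ are:
\begin{itemize}
\item $\theta \le 1$, since $\theta = (\top\ld t)\mt 1$.
\item $\theta \le \theta\mt\theta$ (idempotence of meet), which plays the role of contraction.
\item $x\cdot\theta \le x\cdot(\top\ld t) \le \top\cdot(\top\ld t)\le t$ for every $x$.
\end{itemize}

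\textbf{Base case.} The tree is the single node $q_f$. Then $c\theta = q_f\theta \le q_f\cdot 1 = q_f$.

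\textbf{Inductive step.} Let the root $c$ step to $c'$ (or fork to $c'\jn c''$) via an instruction $\iota\in I$, and suppose the induction hypothesis gives $c'\theta\le q_f$ (resp.\ $c'\theta\le q_f$ and $c''\theta\le q_f$). The key calculation is to show $c\theta\le q_f$ from
\[
c\theta \le c\cdot\theta \mt c\cdot\theta,
\]
using the first copy to produce $t$ and the second to supply $i\ld q_f$. Concretely:
\begin{enumerate}
\item From $c\theta\le t = (i\ld q_f)\ra q_f$ and the Heyting law $a\mt(a\ra b)\le b$, it suffices to prove $c\theta\le i\ld q_f$.
\item By residuation this is equivalent to $i\cdot c\theta\le q_f$.
\item Since $i\le\iota$, it suffices that $\iota\cdot c\theta\le q_f$.
\end{enumerate}

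\textbf{Case analysis on $\iota$.}
\begin{itemize}
\item \emph{Increment} $\iota = (q'r_j)\rd q$ with $c = qR$: we get $\iota\cdot qR\theta \le q'r_jR\theta$, which is $c'\theta\le q_f$ up to the ordering of registers.
\item \emph{Decrement}: $c = q r_j R'$, and $((q'\rd(q r_j))\cdot q r_j)R'\theta \le q'R'\theta$.
\item \emph{Fork}: $\iota = (q'\jn q'')\rd q$, and distributivity of $\cdot$ over $\jn$ gives $\le c'\theta\jn c''\theta\le q_f$. Here the join of the two hypotheses $\overline{q_f}\jn\overline{q_f}$ is handled by the idempotence of $\jn$ in the lattice.
\end{itemize}
The general statement with $\overline{q_f}$ follows since $\overline{q_f}=q_f$ lattice-theoretically.

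\textbf{Main obstacle.} The difficulty is non-commutativity. The instruction $\iota$ is multiplied on the left of $c\theta$, but a decrement needs $r_j$ adjacent to $q$, and registers may sit in the wrong order. I would first try to sidestep this by stating the lemma for a fixed ordering convention of configurations, with register factors placed as the instructions expect. If that fails, I would strengthen the induction hypothesis to cover all orderings of $R$, and check that the instruction forms in $I$, written with $\rd$, always act on the leftmost factors $q\cdot r_j$. For BI the issue disappears because $\cdot$ is commutative, and Lemma~\ref{l: soundness} then specializes to the soundness direction proved proof-theoretically above.
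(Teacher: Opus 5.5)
Your core computation is the paper's: induction on the computation, base case $q_f\theta\le q_f1$, and in the step $c\theta=c\theta\mt c\theta$, with one copy bounded by $\top(\top\ld t)\le t$ and the other by $i\ld q_f$ via $i\le d'\rd d$. Your $i\cdot c\theta\le\iota\cdot c\theta\le c'\theta$ is just the residuated form of the paper's $c\le i\ld c'$, hence $c\theta\le i\ld c'\theta$. For commutative algebras, i.e.\ BI, your argument is complete and coincides with the paper's.

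The non-commutativity obstacle you flag is a genuine gap, and neither of your remedies can close it. The step $\iota\cdot c\theta\le c'\theta$ needs $c$ to be literally the product $d\cdot R$, with $d$ as a left factor, and $c'$ to be $d'\cdot R$. The paper's proof simply writes ``$c=dR$ and $c'=d'R$''. That holds only in the free commutative monoid, so the paper silently uses commutativity at the same point.

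Because $i$ acts only from the left, in a non-commutative algebra an increment places $r_j$ directly after the state, and a decrement can only remove a register sitting there. The registers therefore behave like a stack. No ordering convention, and no induction over all orderings, lets this simulate a counter machine.

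In fact the GBI form of the statement is false. Take $c=qr_1$ and $I=\{(q'r_2)\rd q,\ q''\rd(q'r_1),\ q_f\rd(q''r_2)\}$. Then $c\rightsquigarrow q'r_1r_2\rightsquigarrow q''r_2\rightsquigarrow q_f$, so $M$ accepts $c$.

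Now work in the complex algebra $\mathcal{P}(F)$ of the free group $F$ on generators $a,b,g$, which is a Boolean GBI-algebra. Put $q=\{a\}$, $r_1=\{b\}$, $r_2=\{g\}$, $q'=\{ag^{-1}\}$, $q''=\{ag^{-1}b\}$, $q_f=\{ag^{-1}bg\}$. Each instruction evaluates to the unit $\{\varepsilon\}$, so $i=1$, $i\ld q_f=q_f$, $t=\top$ and $\theta=1$. Hence $c\theta=\{ab\}\not\subseteq q_f$, since $g$ and $b$ do not commute. As $c$ contains a single register, no reordering of $c$ helps.

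So what your argument, and the paper's, actually establishes is the BI case, or GBI only for computations that respect this stack discipline. The GBI half of the ACM reduction would need a different encoding. Undecidability of GBI itself is unaffected, since the main text obtains it from the tiling argument.
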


\begin{proof}
We use induction on the length of the computation; for length equal to zero, we have $c \theta=q_f \theta\leq q_f1=q_f$.  If $c \rightsquigarrow_{n+1} \overline{q_f}$, then there exist
configurations $c_1, \ldots, c_m$, $m \in \{1, 2\}$, such that
$c \rightsquigarrow_1 c':= c_1 \jn \cdots \jn c_m$, and 
$c_i \rightsquigarrow_n \overline{q_f}$ for all $i$. By the induction hypothesis, $c_i\theta \leq q_f$  in BI  for all $i$, so $c'\theta \leq q_f$. Also, since $c \rightsquigarrow_1 c'$,
there exists $d'\rd d\in I$ and  $R \in R_k^*$ such that $c=dR$ and $c'=d'R$, for some  $R \in R_k^*$.
Note that $c=dR \leq i \ld idR \leq i \ld [(d' / d)dR] \leq i \ld d'R=i \ld c'$, hence $c \theta \leq (i \ld c')\theta \leq i \ld c'\theta$. So, 
\begin{align*}c\theta&=c\theta \mt c\theta \leq \top(\top \ld t)\mt c\theta \leq t \mt (i \ld c'\theta)\leq 
[(i \ld q_f)\ra q_f] \mt (i \ld q_f)
\leq q_f. \qedhere
\end{align*}
\end{proof}
The above is an algebraic presentation of the proof-theoretic argument we gave before.
Indeed, the step 
$c\theta=c\theta \mt c\theta$ corresponds to commencing (bottom-up) from $q,R,\theta\Ra q_f$ by an application of contraction. 
As noted, the proof uses two divisions---indeed, observe that both appear in $\theta$---to cover the non-commutative case. The proof specializes to BI by conflating the divisions.

\subsection{A brief review of residuated frames}\label{subsec-review-RF}

A \emph{distributive residuated frame} is a structure $$\m W=(W, W', N, \circ, \1, \ldd, \rdd, \omt, \T, \ldo, \rdo )$$ where $W, W'$ are sets, $\circ$ and $\omt$ are binary operations on $W$, $\1, \T \in W$, $\ldd$ and $\ldo$ are functions from $W \times W'$ to $W'$ and $\rdd, \rdo: W' \times W \ra W'$, $N \subseteq W \times W'$, and for all $x,y,w \in W$ and $z \in W'$, we have the following implications (double lines indicate bi-implications)
{\small
$${
\infer=[(\circ N)]{y \N x \ldd z}{x \circ y \N z} 
\quad 
\infer=[(\circ N)]{x \N z \rdd y}{x \circ y \N z} 
\quad 
\infer=[(\circ a)]{x \circ (y \circ w) \N z}{(x \circ y) \circ z \N z}
}$$
$${\infer[(\circ e)]{y \circ x \N z}{x \circ y \N z} 
\qquad 
\infer=[(\1)]{\1 \circ x \N z}{x \N z}
\qquad 
\infer=[(\1)]{x \circ \1 \N z}{x \N z} 
}$$
$${
\infer=[(\omt N)]{y \N x \ldo z}{x \omt y \N z} 
\quad 
\infer=[(\omt N)]{x \N z \rdo y}{x \omt y \N z} 
\quad
\infer=[(\omt a)]{x \omt (y \omt w) \N z}{(x \omt y) \omt w \N z}} 
$$
$${
\infer[(\omt e)]{y \omt x \N z}{x \omt y \N z} 
\quad 
\infer=[(\T)]{\T \omt x \N z}{x \N z} 
\quad 
\infer[(\omt c)]{x \N z}{x \omt x \N z} 
\quad 
\infer[(\omt i)]{x \omt y \N z}{x \N z} 
}$$
}
without $(\circ e)$; if it also satisfies  $(\circ e)$ it is called \emph{commutative}. The bi-implications $(\circ N)$, $(\omt N)$ are the \emph{nuclear conditions} and the remaining ones are the \emph{structural conditions} of a frame.


For an example from algebra, if $\m A=(A, \mt, \jn, \cdot, 1, \ld, \rd, \ra, \top)$ is a GBI algebra (a BI-algebra), then  $\m W_{\m A}=(A, A, \leq, \cdot, 1, \ld, \rd, \mt, \top, \ra, \leftarrow)$ is a (commutative, respectively) distributive residuated frame. 
For an example from proof theory, $$\m W_{GBI}=(W, W', N, \circ, \1, \ldd, \rdd, \omt, \T, \ldo, \rdo )$$ is a  distributive residuated frame, where 
$W$ is the absolutely free $\{\circ, \1, \omt, \T \}$-algebra over  the set $Fm$ of formulas of $GBI$ (these are the left-hand sides of GBI-sequents, where $\circ$ is usually written by comma and $\omt$ by semicolon), $W'=S_W \times Fm$, where $S_W$ denotes the \emph{sections} over $W$, i.e., elements $u=u(\_)$ of $W$ with a single hole (so if $x \in W$ and $u\in S_W$, then $u(x) \in W$), $x \N (u,a)$ iff the sequent $u(x) \Ra a$ is provable in $BI$, and $x \ldd (u,a):=(u(x \circ  \_),a)$, $(u,a) \rdd  y:=(u(\_ \circ y),a)$, $x \ldo (u,a):=(u(x \omt  \_),a)$, $(u,a) \rdo y:=(u(\_ \omt y),a)$. For the case of BI, the frame is actually commutative.


Now, in any $\m W$, for $X \subseteq W$ and $Y \subseteq W'$, we define 
    $X^\triangleright=\{b \in W' \mid X \N b\}$ and $Y^\triangleleft=\{a \in W \mid a \N  Y\}$,
where $X \N  b$ means $x  \N  b$, for all $x\in X$, and $a \N  Y$ means $a  \N  y$, for all $y\in Y$; moreover,  $X \N  Y$ means $x \N  y$, for all $x \in X$ and $y \in Y$. We write $a^\btr$ for $\{a\}^\btr$ and $b^\btl$ for $\{b\}^\btl$. The maps 
${}^\triangleright$ and ${}^\triangleleft$ form a Galois connection, i.e., for all $X \subseteq W$ and $Y \subseteq W'$ we have $X \subseteq Y^\triangleleft$  iff   $Y \subseteq X^\triangleright$ and the map  $\gamma: X \mapsto X^{\triangleright\triangleleft}$ is a closure operator on the powerset  $\mathcal{P}(W)$. 

Then $\m W^+:=(\gamma[\mathcal{P}(W)], \cap, \cup_\gamma, \circ_\gamma, 1_\gamma, \ld, \rd, \ra, \top)$ is a GBI algebra, where  $X \cup_\gamma Y:= \gamma(X \cup Y)$, $X \circ_\gamma Y:= \gamma(X \circ Y)$, $1_\gamma=\gamma(\1)$,  $X \ld Y=\{w \in W \mid X \circ \{w\} \subseteq Y\}$, $Y \rd X=\{w \in W \mid \{w\} \circ X \subseteq Y\}$, $X \ra Y=\{w \in W \mid X \omt \{w\} \subseteq Y\}$, $\top=W$;  this GBI-algebra is called the \emph{Galois algebra} of $\m W$ and it is actually a BI-algebra when $\m W$ is commutative.

 For each $b \in W'$, the set $b^\btl$ is closed and it is called a \emph{basic closed set}. Also, the basic closed sets form a \emph{basis}: every closed set is an intersection of basic closed sets. As a result, for $X, Y \in W^+$, we have $X \subseteq Y$ iff for all $b \in W'$ we have: $Y \subseteq b^\btl \Ra X\subseteq b^\btl$.

 The map $\gamma$ is actually a nucleus, i.e., it further satisfies $\gamma(x)\gamma(y)\leq \gamma(xy)$, or equivalently $\gamma(\gamma(x)\gamma(y))= \gamma(xy)$. Also, simply because it is a closure operator it also satisfies $\gamma(\gamma(x)\jn \gamma(y))= \gamma(x \jn y)$.


We now provide an \emph{algebraic presentation of ACMs}. For an integer $k$, a  $k$-ACM is a structure $\texttt{M}=(Q, R_k, I, q_f)$, where $Q$ is a set of \emph{states}, $q_f \in Q$, $R_k=\{r_1, \ldots, r_k\}$ is the set of \emph{register tokens}, and $I$ is a set of instructions of the form $q'r\rd q$ (increment), $q' \rd qr$ (decrement) and $(q_1 \jn q_2)\rd q$ (fork), where $q, q', q_1, q_2\in Q$ and $r \in R$.

 A \emph{configuration} is an element of the free commutative monoid $(Q \cup R_k)^*$ generated by $Q \cup R_k$ of the form $q r_1^{n_1}\cdots r_k^{n_k}$, where $q \in Q$; as usual, $r^n$ denotes the $n$-fold product of $r$.

We also consider the free commutative semigroup $(J, \jn)$ over  $(Q \cup R_k)^*$, so the elements of $J$ admit a normal form up to commutativity: $x_1 \jn \ldots \jn x_m$, where $m \in \mathbb{Z}^+$ and $x_1, \ldots, x_m \in (Q \cup R_k)^*$. On $J$ we define the operation $\cdot$ by 
$(\bigvee x_i)\cdot (\bigvee y_j)=\bigvee (x_i\cdot y_j)$, for all $x_i, y_j \in (Q \cup R_k)^*$. Note that $\cdot$ distributes over $\jn$, so  $(J, \jn, \cdot)$ has a semiring structure. Note that we do not assume $\jn$ to be idempotent so  $x_1 \jn \ldots \jn x_m$ is essentially a multiset and not a set. (This is because in the computation tree below, we do not want to identify leaves.)

On $J$ we define the \emph{one-step computation} relation $\rightsquigarrow_1$ to be the smallest relation that includes $d \rightsquigarrow_1 d'$, for all $d' \rd d \in I$, and is closed under multiplication (if $x \rightsquigarrow_1 y$ then $xz \rightsquigarrow_1 yz$, for all $x,y,z \in J$) and join (if $x \rightsquigarrow_1 y$  then $x\jn z \rightsquigarrow_1 y \jn z$, for all $x,y,z \in J$). 
So, for example, if $q'r_1 \rd q\in I$, then $q \rightsquigarrow_1 q'r_1$, but also $qr_1r_2 \rightsquigarrow_1 q'r_1r_1 r_2$ and $qr_1r_2  \jn q'r_1 r_3 \rightsquigarrow_1 q'r_1r_1 r_2 \jn q'r_1 r_3$. 
It is easy to see that since multiplication distributes over join, for every one-step computation $x \rightsquigarrow_1 y$, then there exists  $d'\rd d \in I$ such that $x=dz \jn w$ and $y=d'z \jn w$, for some $z \in (Q \cup R_k)^*$ and $w \in J$ (or $x=dz$ and $y=d'z$). We define the \emph{computation forest} of a one-step computation $dz \jn w \rightsquigarrow_1 d'z \jn w$, where  $w=\bigvee_j w_j$, i.e., of $dz \jn \bigvee_j w_j \rightsquigarrow_1 d'z \jn \bigvee_j w_j$ that has minimal elements $dz$ and all of the $w_j$; also the child node of $dz$ is $d'z$, if the instruction is increment of decrement, and the children nodes of  $dz$ are $q_1z$ and $q_2z$ (the $w_j$ have no children).

We define $\rightsquigarrow$ to be the reflexive transitive closure of $\rightsquigarrow_1$; to be precise $\rightsquigarrow_n$ is the $n$-fold composition/power of $\rightsquigarrow_1$ and   $\rightsquigarrow$ is the union of the $\rightsquigarrow_n$ over all $n \in \mathbb{N}$. A \emph{computation of length $n$} is a sequence of elements such that $x_0 \rightsquigarrow_1 x_1 \rightsquigarrow_1 \ldots \rightsquigarrow_1 x_n$. The computation forest of a computation is defined recursively. Given computation forests $F$ for $x \rightsquigarrow dz \jn w$ and $d' \rd d \in I$, the computation forest of $x \rightsquigarrow dz \jn  w\rightsquigarrow_1 d'z \jn  w$ is obtained by extending $F$ so that: the child node of $dz$ is $d'z$, if the instruction is increment of decrement, and the children nodes of  $dz$ are $q_1z$ and $q_2z$

Note that we took $(Q \cup R_k)^*$ to be the free commutative monoid and, for example, $qr_1r_2=qr_2r_1$, hence $qr_1r_2\rightsquigarrow qr_2r_1$. However, we could have taken  $(Q \cup R_k)^*$ to be the free monoid
and stipulate commutativity only at the level of the computation relation, by including $xy\rightsquigarrow_1 yx$ in the definition of $\rightsquigarrow$. (This is the approach we take with $\preccurlyeq$ in the proof below.) 

In the definition of $\rightsquigarrow$ the number of operations on the free algebra $(Q \cup R_k)^*$. In other words, it is not important that we took the free algebra over one binary operation, and we could take the free algebra with two binary and two nullary operations. This is what we do in the proof below and we denote the resulting computation relation by $\preccurlyeq$.

\subsection{Completeness of the reduction}
\label{subsec-completeness}

Let $M$ be the absolutely free $\{\circ, \1, \omt, \T \}$-algebra over $Q \cup R_k$.  We define $W:=M$, and $W'=S_M$, the set of all sections over $M$. For $u \in S_W$, we define $x \ldd u:=u(x \circ  \_)$, $u \rdd  y:=u(\_ \circ y)$, $x \ldo u:=u(x \omt  \_)$, $u \rdo  y:=u(\_ \omt y)$. If an element of $M$ has only $\circ$'s in it (for example it is a configuration) we usually write $\circ$ as $\cdot$ or simply as concatenation.

We define $(J, \jn)$ to be the free commutative semigroup over $M$; so, $M \subseteq J$ and the elements of $J$ admit a normal form up to commutativity: $x_1 \jn \ldots \jn x_m$, where $m \in \mathbb{Z}^+$ and $x_i \in M$. On $J$ we also define the operations $\circ, \1, \omt, \T$ by 
$(\bigvee x_i)\circ (\bigvee y_j)=\bigvee (x_i\circ y_j)$ and 
$(\bigvee x_i)\omt (\bigvee y_j)=\bigvee (x_i\omt y_j)$, for all $x_i, y_j \in M$, while the constants are the same as the ones of $M$; this yields the algebra $\m J:=(J, \circ, \1, \omt, \T, \jn)$.

We write $\preccurlyeq $ for the least $\{\circ, \omt, \jn\}$-compatible relation on $J$ that contains $\rightsquigarrow$, the semilattice axioms for $\omt$ (i.e.,  $x \omt y \preccurlyeq y \omt x$, $x \preccurlyeq x \omt x$, $x \omt y \preccurlyeq x$), $\circ$-associativity ($(x \circ y) \circ z \cong  x \circ (y \circ z)$), $\circ$-commutativity ($x \circ y \preccurlyeq  y \circ x$), 
and
the identity axioms ($x \circ \1 \cong  x \cong \1 \circ x$, $x \omt \T \cong  x$), where $x \cong y$  is short for ($x \preccurlyeq y$ and $y \preccurlyeq x$), for $x,y \in W$.
Finally, for $x \in W$ and $u \in S_W$, we write $x \mathrel{N} u$ iff $u(x) \preccurlyeq  q_f$.
We set $\m W_\texttt{M}:=(W, W', N, \circ, \1, \ldd, \rdd, \omt, \T, \ldo, \rdo )$.

\begin{lemma}
For every ACM $\texttt{M}$, $\m W_\texttt{M}$ is a commutative distributive residuated frame.
\end{lemma}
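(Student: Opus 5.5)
We verify the defining conditions of a commutative distributive residuated frame directly from the definitions of $\m W_\texttt{M}$: the nuclear conditions first, then each structural condition, each reducing to a property of the relation $\preccurlyeq$.

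\textbf{Nuclear conditions.} These hold by construction of the operations on sections. For $x,y\in W$ and $u\in W'=S_M$ we have $y \N x\ldd u$ iff $(x\ldd u)(y)\preccurlyeq q_f$. By definition $(x\ldd u)(y)=u(x\circ y)$, so this is equivalent to $x\circ y\N u$. The same unfolding gives the conditions for $\rdd$ (with $u(\_\circ y)$) and for $\ldo,\rdo$ (with $u(x\omt\_)$ and $u(\_\omt y)$). No property of $\preccurlyeq$ is used here beyond the definition of $N$.

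\textbf{Structural conditions.} Each is an implication of the form: if $u(s)\preccurlyeq q_f$ then $u(s')\preccurlyeq q_f$, where $s'\preccurlyeq s$ is one of the generating axioms of $\preccurlyeq$. The conditions and the axioms supplying them are as follows.
\begin{itemize}
\item $(\circ a)$ and $(\omt a)$ in both directions: $\circ$-associativity is built in as $\cong$. For $\omt$, associativity is a consequence of the semilattice axioms together with compatibility; see the remark at the end.
\item $(\circ e)$ and $(\omt e)$: the commutativity axioms $x\circ y\preccurlyeq y\circ x$ and $x\omt y\preccurlyeq y\omt x$.
\item $(\1)$ and $(\T)$ in both directions: the identity axioms, stated as $\cong$.
\item $(\omt c)$: the axiom $x\preccurlyeq x\omt x$. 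From $x\omt x \N u$, i.e.\ $u(x\omt x)\preccurlyeq q_f$, we get $u(x)\preccurlyeq u(x\omt x)\preccurlyeq q_f$.
\item $(\omt i)$: the axiom $x\omt y\preccurlyeq x$.
\end{itemize}

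Every one of these arguments rests on a single transfer principle: if $s'\preccurlyeq s$ then $u(s')\preccurlyeq u(s)$ for every section $u$, and $\preccurlyeq$ is transitive. I would prove this lemma first and then read off each item above.

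The transfer principle is where the real content lies. Transitivity requires that $\preccurlyeq$ be a preorder, so I would take $\preccurlyeq$ to be the least reflexive, transitive, compatible relation containing the listed pairs, in line with the phrase ``least compatible relation'' and with $\rightsquigarrow$ being a reflexive--transitive closure. The passage from $s'\preccurlyeq s$ to $u(s')\preccurlyeq u(s)$ then follows by induction on the depth of the hole in $u$, applying compatibility with $\circ$ and $\omt$ once per level.

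One point needs care. $N$ relates elements of $W=M$, whereas $\preccurlyeq$ lives on $J$, the free commutative semigroup over $M$ under $\jn$. Since the statement $u(x)\preccurlyeq q_f$ only involves elements of $M\subseteq J$, and compatibility on $J$ restricts to $M$, the induction can be carried out inside $J$ without difficulty. The remaining subtle item is $(\omt a)$, since $\omt$-associativity is not listed explicitly. If the semilattice axioms as given do not yield it, I would add it as an extra axiom, with $\cong$, to the definition of $\preccurlyeq$.
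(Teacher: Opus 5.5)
Your proposal is correct and follows the paper's route. The nuclear conditions are immediate from how $\ldd,\rdd,\ldo,\rdo$ act on sections, and each structural condition comes from pushing the matching generating pair of $\preccurlyeq$ through a section by compatibility and then using transitivity; the paper checks only exchange, contraction and weakening explicitly and leaves the rest implicit.

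Your fallback for $(\omt a)$ is unnecessary, since associativity of $\omt$ already follows from the listed semilattice axioms. For $z=(x\omt y)\omt w$ you get $z\preccurlyeq x$ and $z\preccurlyeq y\omt w$ by exchange and weakening under compatibility, hence $z\preccurlyeq z\omt z\preccurlyeq x\omt(y\omt w)$, and the converse direction is symmetric.
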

\begin{proof}
We first check the two nuclear properties. We have  $x \omt y \mathrel{N} u$ iff  $u(x\omt y) \preccurlyeq  q_f$ iff  $x  \mathrel{N} u(\_ \omt y)=u \rdo  y$. Likewise, for $\ldo$, $\ldd$, and $\rdd$.

Next, we check the four frame structural properties. If  $x \omt y \mathrel{N} u$ then $u(x\omt y) \preccurlyeq  q_f$ so   $u(y\omt x) \preccurlyeq u(x\omt y) \preccurlyeq  q_f$, hence $y \omt x \mathrel{N} u$. Likewise for exchange for $\circ$. If  $x\omt x \mathrel{N} u$, then  $u(x) \preccurlyeq  u(x\omt x) \preccurlyeq q_f$, so $x \mathrel{N} u$. If   $x \mathrel{N} u$, then  $u(x\omt y) \preccurlyeq  u(x) \preccurlyeq q_f$, so $x \omt y \mathrel{N} u$.
%
\end{proof}

\begin{lemma}\label{l: completness} If $\m W_\texttt{M} \models c\theta \leq q_f$ then $c \rightsquigarrow \overline{q_f}$, for every ACM $\texttt{M}$ and configuration $c$.
\end{lemma}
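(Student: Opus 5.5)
The plan is to interpret everything in the Galois algebra $\m W_\texttt{M}^+$, using the valuation that sends each variable $v\in Q\cup R_k$ to the basic closed set $\gamma(\{v\})$. We then show that in $\m W_\texttt{M}^+$ the element $c\theta$ is contained in $q_f^{\btr\btl}$ only when the corresponding term actually computes to $\overline{q_f}$. The basic tool is the standard \emph{truth lemma} for residuated frames: for every formula $a$ built from the variables, if $x\in W$ is the ``syntactic'' element read off from $a$, then $x\in \llbracket a\rrbracket$. Furthermore, whenever $\llbracket a\rrbracket\subseteq u^\btl$, that is, $u(z)\preccurlyeq q_f$ for all $z\in\llbracket a\rrbracket$, we read this as saying that plugging $a$ into $u$ computes to $q_f$.

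The first step is to describe $\llbracket q_f\rrbracket=\gamma(q_f)=\{x\in W\mid \forall u\,(u(q_f)\preccurlyeq q_f\Rightarrow u(x)\preccurlyeq q_f)\}$. Taking $u$ to be the empty section gives $x\preccurlyeq q_f$, so every element of $\llbracket q_f\rrbracket$ computes to $q_f$. The second step is to show that the element $\T$ (equivalently, $\T\omt\1$) lies in $\llbracket\theta\rrbracket=\llbracket\top\ld t\rrbracket\cap\llbracket 1\rrbracket$. This means checking that for every $y\in W$ we have $y\circ\T\in\llbracket t\rrbracket=\llbracket (i\ld q_f)\ra q_f\rrbracket$. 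Unfolding this, for every $z\in\llbracket i\ld q_f\rrbracket$ we need $z\omt (y\circ \T)\in\gamma(q_f)$, and by the semilattice axioms for $\omt$ this reduces to $z\in\gamma(q_f)$. The real content is the opposite direction. I would not use $\T$ itself but the element $w:=\1\omt\T$ (a ``neutral'' multiplicative-and-additive unit), and show $w\in\llbracket\theta\rrbracket$ using $x\circ\1\cong x$ together with the weakening and contraction axioms of $\omt$.

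The heart of the argument is to define the closed set $C:=\{x\in W\mid x\preccurlyeq\overline{q_f}\ \text{in the ACM sense, hereditarily under contexts}\}$. More precisely, I would take $D=\{x\mid \text{for all } u,\ u(q_f)\preccurlyeq q_f\text{-type contexts}\ldots\}$, or more simply the closed set $\gamma(\{\text{configurations } c' : c'\rightsquigarrow\overline{q_f}\})$. The goal is then to prove, by induction on the structure of instructions, that each instruction $d'\rd d\in I$ holds in $\m W_\texttt{M}^+$ (the element $d$ lies in $\llbracket d'\rd d\rrbracket$ up to $\gamma$), which gives $\1\in\llbracket i\ld(\ldots)\rrbracket$ appropriately. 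In the end the hypothesis $\m W_\texttt{M}\models c\theta\leq q_f$ says $\gamma(c\circ w)\subseteq\gamma(q_f)$, and since $c\circ w\in\gamma(c\circ w)$ we get $c\circ w\preccurlyeq q_f$, hence $c\preccurlyeq q_f$ in $J$.

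The final and hardest step is to extract from $c\preccurlyeq q_f$ in $\m J$ an honest computation $c\rightsquigarrow\overline{q_f}$. Since $\preccurlyeq$ is generated by $\rightsquigarrow$ together with structural axioms, I would define an interpretation (a ``flattening'' homomorphism) $\pi:J\to\mathcal P(\text{multisets of configurations})$ sending $\circ$ to multiset product, $\omt$ to set-union of alternatives (choose one), $\T$ and $\1$ to the unit, and $\jn$ to multiset sum. I would then prove by induction on the generation of $\preccurlyeq$ the invariant that if $x\preccurlyeq y$ and $y$ flattens to an accepting join, then $x$ flattens to something that computes to that join. Checking this invariant against the structural generators is routine; the hard case is $\omt$-contraction $x\preccurlyeq x\omt x$, which must be handled by choosing the interpretation of $\omt$ existentially rather than as a product. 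I expect the main obstacle to be that $\preccurlyeq$ is only a compatible relation and not symmetric, so the invariant has to be formulated monotonically: every element above a configuration-join that reaches $\overline{q_f}$ must itself reach $\overline{q_f}$.
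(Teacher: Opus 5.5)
\textbf{Main gap: the final step.} Your flattening $\pi$ treats $\omt$ as an \emph{eager} choice. In $\m J$, however, both $\circ$ and $\omt$ are defined to distribute over $\jn$, so after a fork the two branches may resolve the same $\omt$ differently. Hence $\pi$ is not a homomorphism on $\m J$, and no eager-choice invariant can be proved by induction over all of $\preccurlyeq$.

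Concretely, take only the fork $(q_1\jn q_2)\rd q$ and the decrements $q_f\rd q_1r_1$ and $q_f\rd q_2r_2$. Applying the fork in context and then a different weakening in each branch gives
\[ q\circ(r_1\omt r_2)\preccurlyeq q_1\circ(r_1\omt r_2)\jn q_2\circ(r_1\omt r_2)\preccurlyeq q_1r_1\jn q_2r_2\preccurlyeq q_f\jn q_f . \]
Yet neither eager alternative $qr_1$ nor $qr_2$ computes to $\overline{q_f}$: they get stuck at $q_2r_1$ and $q_1r_2$ respectively. So this pair lies in $\preccurlyeq$ but violates your invariant in either of your formulations.

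Contraction is not the hard case at all, since in your semantics $\pi(x\omt x)=\pi(x)$. The hard case is this lazy, per-branch choice. The missing idea is to use that the derivation starts at a configuration $c$. Then every $\omt$ arises from a contraction of a single ancestor and can be cancelled by moving a weakening upward, which is the role of the paper's permutation Claim. The example also shows that any such statement can only hold for terms reachable from a configuration.

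A smaller slip: $\T$ must denote the empty set of alternatives, not the empty configuration. Otherwise the generator $x\preccurlyeq x\omt\T$ already fails, e.g.\ in the context $\_\circ q_f$ with $x=r_1$.

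\textbf{First half.} In outline this is the paper's argument: the valuation $v\mapsto\gamma(\{v\})$, $c\in\llbracket c\rrbracket$, a unit in $\llbracket\theta\rrbracket$, then a test against a trivial section. But its one substantive step is misstated and not proved.

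What is needed is $\1\in\llbracket i\rrbracket$, i.e.\ $d\in\gamma(\{d'\})$ for every $d'\rd d\in I$, not `$d\in\llbracket d'\rd d\rrbracket$'. This follows directly from $d\preccurlyeq d'$ and compatibility of $\preccurlyeq$ with sections. No auxiliary closed sets $C$, $D$ are needed, and no truth lemma; the latter has no content in $\m W_{\mathtt M}$, whose points contain no divisions or arrows.

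For a fork you only get $u(d)\preccurlyeq u(q_1)\jn u(q_2)$. So $N$ must be read as $u(x)\preccurlyeq\overline{q_f}$, as the paper's proof does. With $u(x)\preccurlyeq q_f$, as in your `$c\preccurlyeq q_f$', fork instructions are not validated.

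Once $\1\in\llbracket i\rrbracket$, you get $\llbracket i\ld q_f\rrbracket\subseteq\gamma(\{q_f\})$, so $\llbracket t\rrbracket=W$ by weakening. Hence $\1$ itself lies in $\llbracket\theta\rrbracket$, and neither $\T$ nor $\1\omt\T$ is needed.
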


\begin{proof}
The inequality $c\theta \leq q_f$ holds in $\m W^+_\texttt{M}$ under the evaluation $e$ extending the assignment $e(x)=\gamma(\{x\})$ for $x \in Q \cup R_k$; here $\gamma(X)=X^{\btr \btl}$. So, $e(c\theta) \subseteq e(q_f)=\gamma(\{q_f\})$.
Since $\1 \circ q_f \preccurlyeq q_f$, we get $q_f \mathrel{N} u_\1$, where $u_\1:=(\1 \circ \_)$, i.e., $q_f \in u_\1^\btl$, so $\gamma(\{q_f\}) \subseteq  u_\1^\btl$. Thus, 
$e(c) \circ_\gamma e(\theta) = e(c\theta)\subseteq u_\1^\btl$.
We will first prove that $e(c)=\gamma(\{c\})$ and that $ 1\subseteq e(i)$, where $1:=1_{\m W^+_\texttt{M}}$ is the multiplicative unit of $\m W^+_\texttt{M}$.

 To prove $e(c)=\gamma(c)$, let $c=a_1\cdot a_2 \cdots a_m$, for some $a_i \in Q \cup R_k$. Then $e(c)=e(a_1\cdot a_2 \cdots a_m)=
 e(a_1)\circ_\gamma  e(a_2) \circ_\gamma \cdots \circ_\gamma e(a_m)=
 \gamma(a_1)\circ_\gamma  \gamma(a_2) \circ_\gamma \cdots \circ_\gamma \gamma(a_m)=
  \gamma( \gamma(a_1)\circ  \gamma(a_2) \circ \cdots \circ \gamma(a_m))=
  \gamma(a_1\cdot a_2 \cdots a_m)= 
 \gamma(c)$. We wrote $\gamma(x)$ for $\gamma(\{x\})$.

 To prove  $1 \subseteq e(i)$, note that: 
 $1 \subseteq e(i)=\bigcap (\gamma(d'_i) \rd \gamma(d_i))$ iff $1 \subseteq \gamma(d'_i) \rd \gamma(d_i)$ for all $d' \rd d \in I$ iff  $\gamma(d_i) \subseteq \gamma(d'_i)$ iff  $d \in \gamma(d')$  for all $d' \rd d \in I$. So, for a given $d' \rd d \in I$, we assume $\gamma(d') \subseteq u^\btl$, i.e., $d' \in u^\btl$, and will show $d \in u^\btl$; we distinguish cases, based on whether $d'$ is a join or not.
 If $d'$ is a configuration, then $d' \in u^\btl$ gives $u(d') \preccurlyeq \overline{q_f}$, while 
 $d' \rd d \in I$ gives $d \rightsquigarrow d'$, so $d \preccurlyeq d'$, hence  $u(d) \preccurlyeq u(d') \preccurlyeq  \overline{q_f}$; thus $d \in u^\btl$. 
 If $d'= q_1 \jn q_2$, then $d' \in u^\btl$ gives $u(q_1) \preccurlyeq \overline{q_f}$ and $u(q_2) \preccurlyeq \overline{q_f}$, while 
 $d' \rd d \in I$ gives $d \rightsquigarrow q_1 \jn q_2$, so $d \preccurlyeq q_1 \jn  q_2$, hence  $ u(d) \preccurlyeq u^{\m J}(q_1 \jn q_2)= u^{\m J}(q_1) \jn u^{\m J}(q_2) \preccurlyeq  \overline{q_f} \jn \overline{q_f} \preccurlyeq \overline{q_f}$; thus $d \in u^\btl$. 

Using $1 \subseteq e(i)$, we prove   $ e(\theta)=1$. Indeed, 
$e(t)=[e(i)\ld \gamma(q_f)] \ra \gamma(q_f)\supseteq 
[1\ld \gamma(q_f)] \ra \gamma(q_f)
=\gamma(q_f) \ra \gamma(q_f)=\top_{\m W^+_\texttt{M}}$, so
$1  \supseteq e(\theta)=e(\top) \ld e(t)\cap 1 \supseteq e(\top) \ld \top_{\m W^+_\texttt{M}} \cap 1  \supseteq 1$.

Now, from $e(c)=\gamma(c)$,  $e(\theta)=1$ and
$e(c) \circ_\gamma e(\theta) \subseteq u_\1^\btl$, we get  
$c \in \gamma(c)=e(c) \circ_\gamma e(\theta) \subseteq u_\1^\btl$, i.e., 
 $c \preccurlyeq  \1 \circ c=u_\1(c) \preccurlyeq  \overline{q_f}$,  hence $c \preccurlyeq \overline{q_f}$. We will show that  $c  \rightsquigarrow \overline{q_f}$. 

 We first prove the Claim that: if $z\preccurlyeq_{n+1} q_f$  and $z$ contains $\omt$, then there exists 
$z'$ such that $z\preccurlyeq_1 z'\preccurlyeq_n q_f$ and the first step was weakening. As a result, in a given computation witnessing $c \preccurlyeq \overline{q_f}$ there might be some instructional steps first, applied successively to $c$, and then the first application (if any) of contraction along a branch giving $c' \preccurlyeq c' \omt c'$, where $c'$ is a configuration, or more generally $c'(x) \preccurlyeq c'(x \omt x)$, where $x$ is a $\circ$-term. Since the structure $c' \omt c'$, or more generally the structure $c'(x \omt x)$, contains an $\omt$, by the Claim we may apply weakening  removing $\omt$ and getting back to $c'$, or more generally to $c'(x)$. Therefore, there is a computation where no contractions are applied (hence no steps involving $\omt$). Likewise we see that there a computation witnessing $c \preccurlyeq \overline{q_f}$ without $\1$ or $\T$ steps either. Hence, $c  \rightsquigarrow \overline{q_f}$.

To prove the Claim, we distinguish cases for what the first step in $z\preccurlyeq_{n+1} q_f$ is.
If $v(x) \preccurlyeq_1 v(x \omt x)\preccurlyeq_n q_f$, then by IH $v(x \omt x)\preccurlyeq_1 v(x)\preccurlyeq_{n-1} q_f$, in which case $v(x)\preccurlyeq_{n-1} q_f$, or 
$v(x \omt x)\preccurlyeq_1 v'(x \omt x) \preccurlyeq_{n-1} q_f$, in which case
$v(x) \preccurlyeq_1 v'(x) \preccurlyeq_1 v'(x \omt x) \preccurlyeq_{n-1} q_f$.
The remaining cases of non-instruction steps are similar.

To discuss the instruction steps we introduce some terminology. For $z \in M$ and  a leaf $d$ of the structure tree of $z$, we denote by $m(d)$ the biggest principal donwset of the structure tree of $z$ that contains $d$ and does not contain any $\omt$; also we denote by $\omt_d$ be the occurrence of $\omt$ right above $m(d)$, if any. 
We say that $z=v(m(d))$ is in \emph{$\circ$-form} (relative to $d$). In a step $u(x \omt y)\preccurlyeq_1 u(x)$ we say that the weakening was \emph{applied} to the displayed $\omt$ and that $y$ was \emph{removed}; if $v(y)=u(x \omt y)$, then we set $\overline{v}:=u(x)$. Therefore, if weakening was applied in $v(m(d))$, then either it was applied to $\omt_d$ and $m(d)$ was removed, i.e., $v(m(d)) \preccurlyeq \overline{v}$, or it was applied to some $\omt$ inside $v$ (possibly to $\omt_d$ even) but $m(d)$ was not removed, i.e., $v(m(d)) \preccurlyeq v'(m(d))$, for some $v'$.

If $v(m(q)) \preccurlyeq_{1} v(m(q_1)) \jn v(m(q_2))\preccurlyeq_n q_f$, where the expressions are in $\circ$-form, then by IH either $v(m(q_i)) \preccurlyeq_1 v'(m(q_i)) \preccurlyeq_{n-1} q_f$ for both $i \in \{1,2\}$  (neither one of $m(q_1)$ and $m(q_2)$ was removed by the weakening), in which case 
$v(m(q)) \preccurlyeq_{1} v'(m(q)) \preccurlyeq_1 v'(m(q_1)) \jn v'(m(q_2))\preccurlyeq_{n-1} q_f$, or  
$v(m(q_i)) \preccurlyeq_1 \overline{v}\preccurlyeq_{n-1} q_f$ for some $i \in \{1,2\}$ (at least one of $m(q_1)$ and $m(q_2)$ was removed by the weakening), in which case we have  $v(m(q)) \preccurlyeq_{1} \overline{v}\preccurlyeq_{n-1} q_f$.

If $v(m(qr)) \preccurlyeq_{1} v(m(q'))\preccurlyeq_n q_f$, where the expressions are in $\circ$-form, then by IH either $v(m(q')) \preccurlyeq_1 v'(m(q')) \preccurlyeq_{n-1} q_f$, in which case 
$v(m(qr)) \preccurlyeq_{1} v'(m(qr)) \preccurlyeq_1 v'(m(q')) \preccurlyeq_{n-1} q_f$, or  
$v(m(q')) \preccurlyeq_1 \overline{v}\preccurlyeq_{n-1} q_f$, in which case we have  $v(m(q)) \preccurlyeq_{1} \overline{v}\preccurlyeq_{n-1} q_f$.
 \end{proof}

  \begin{corollary}
If $\mathcal{V}$ is variety of GBI algebras  that contains $\m W_\texttt{M}$, for some  ACM $\texttt{M}$, then $\mathcal{V} \models c\theta \leq q_f$ iff $c \rightsquigarrow \overline{q_f}$, for every configuration $c$ of  $\texttt{M}$.
\end{corollary}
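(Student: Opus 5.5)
The corollary is an ``iff'', so I will prove the two directions separately, combining Lemma~\ref{l: soundness} and Lemma~\ref{l: completness}. The one point to clarify first is what ``$\mathcal{V}$ contains $\m W_\texttt{M}$'' means. Since $\mathcal{V}$ is a variety of GBI-algebras and $\m W_\texttt{M}$ is a frame, I read it as: $\mathcal{V}$ contains the Galois algebra $\m W^+_\texttt{M}$. This is a GBI-algebra (indeed a BI-algebra, because $\m W_\texttt{M}$ is commutative by the preceding lemma). Lemma~\ref{l: completness} is also stated in terms of validity in $\m W_\texttt{M}$, which by its proof means validity in $\m W^+_\texttt{M}$, so the two readings agree.

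\emph{Right to left.} Suppose $c \rightsquigarrow \overline{q_f}$. Lemma~\ref{l: soundness} gives $\mathsf{GBI}\models c\theta\leq q_f$. Every member of $\mathcal{V}$ is a GBI-algebra, so the inequation holds throughout $\mathcal{V}$, i.e.\ $\mathcal{V}\models c\theta\leq q_f$. One check is needed here: Lemma~\ref{l: soundness} is stated for the relation $\rightsquigarrow$ with multiset joins $\overline{q_f}$, so I should confirm that its inductive step handles forks. It does, since $c'\theta=(c_1\jn c_2)\theta=c_1\theta\jn c_2\theta\le q_f$ by distributivity of multiplication over join.

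\emph{Left to right.} Suppose $\mathcal{V}\models c\theta\leq q_f$. Because $\m W^+_\texttt{M}\in\mathcal{V}$, the inequation holds in $\m W^+_\texttt{M}$ under every assignment. In particular it holds under the assignment $e(x)=\gamma(\{x\})$ used in the proof of Lemma~\ref{l: completness}. That lemma then yields $c\rightsquigarrow\overline{q_f}$.

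The only real obstacle is this membership bookkeeping: the object $\m W_\texttt{M}$ must actually be an algebra in $\mathcal{V}$, and the hypothesis of Lemma~\ref{l: completness} must be exactly validity of $c\theta\le q_f$ in $\m W^+_\texttt{M}$. Once that reading is fixed, the corollary is immediate.
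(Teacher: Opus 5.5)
Your proposal is correct and follows the paper's proof: Lemma~\ref{l: soundness} gives the right-to-left direction because every member of $\mathcal{V}$ is a GBI-algebra, and Lemma~\ref{l: completness}, applied to the Galois algebra $\m W^+_\texttt{M}\in\mathcal{V}$, gives the left-to-right direction. Your reading of ``contains $\m W_\texttt{M}$'' as ``contains $\m W^+_\texttt{M}$'' is the intended one.
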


\begin{proof}
If $c \rightsquigarrow \overline{q_f}$, then
$\mathsf{GBI} \models c\theta \leq q_f$, by Lemma~\ref{l: soundness}, so 
$\mathcal{V} \models c\theta \leq q_f$. 
Conversely, if 
$\mathcal{V} \models c\theta \leq q_f$, then
$\m W_\texttt{M} \models c\theta \leq q_f$, so $c \rightsquigarrow \overline{q_f}$,  by Lemma~\ref{l: completness}. 
\end{proof}

 Recall that there are ACMs  with undecidable acceptance problem.
 
 \begin{corollary}\label{c: UndInt}
If a variety of GBI algebras  contains $\m W_\texttt{M}$, for some undecidable ACM $\texttt{M}$, then its equational theory is undecidable.    
\end{corollary}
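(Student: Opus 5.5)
The plan is a standard reduction, using the preceding corollary together with the undecidability of ACM acceptance. Fix an ACM $\texttt{M}=(Q,R_k,I,q_f)$ whose acceptance problem is undecidable; such an $\texttt{M}$ exists by the theorem of Lincoln et al., and the hypothesis of Corollary~\ref{c: UndInt} supplies one for which the variety $\mathcal{V}$ contains $\m W_\texttt{M}$ (strictly speaking, contains its Galois algebra $\m W^+_\texttt{M}$, which is the object Lemma~\ref{l: completness} evaluates in). The map sending a configuration $c$ of $\texttt{M}$ to the inequality $c\theta\leq q_f$ is clearly computable. Here $c$ is a finite product of letters, and $\theta=(\top\ld t)\mt 1$ with $t=(i\ld q_f)\ra q_f$ and $i=\bigwedge I$ is a fixed term depending only on $\texttt{M}$. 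If one prefers equations, the inequality can be rewritten as $c\theta\jn q_f=q_f$.

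By the preceding corollary, $\mathcal{V}\models c\theta\leq q_f$ iff $c\rightsquigarrow\overline{q_f}$. I then need to check that $c\rightsquigarrow\overline{q_f}$ coincides with acceptance of $c$ by $\texttt{M}$ in the sense of Section~\ref{subsec-ACMs}, i.e.\ the existence of a computation tree rooted at $c$ all of whose leaves are labelled $q_f$ with empty registers. For this I will unfold the computation-forest construction of Section~\ref{subsec-review-RF}. A computation $c=x_0\rightsquigarrow_1\cdots\rightsquigarrow_1 x_n=\overline{q_f}$ builds a forest rooted at $c$ whose leaf multiset is $x_n$, so its leaves are all $q_f$. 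Conversely, given an accepting tree, fire its instructions in any order compatible with the tree, using closure of $\rightsquigarrow_1$ under $\jn$ and multiplication; this produces a computation from $c$ to $\overline{q_f}$ with one $q_f$ per leaf. The non-idempotence of $\jn$ matters here only for keeping the bookkeeping faithful; it does not affect the existence of either object.

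Thus acceptance of $c$ by $\texttt{M}$ reduces many-one to the equational theory of $\mathcal{V}$, and that theory is undecidable. The main obstacle is minor: making sure the multiset and join correspondence between computation trees and $\rightsquigarrow$ is stated carefully, and that Lincoln et al.'s undecidability is for acceptance of varying input configurations $c$ with fixed $\texttt{M}$. If instead it is phrased for a fixed initial configuration with varying machines, the reduction still works, provided the hypothesis of Corollary~\ref{c: UndInt} is read as holding for each such $\texttt{M}$.
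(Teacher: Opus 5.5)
Your proposal is correct and is essentially the paper's argument. The paper leaves this corollary without proof because it follows at once from the preceding corollary: the computable map $c\mapsto (c\theta\leq q_f)$ for the fixed machine $\texttt{M}$, together with the undecidability of acceptance for $\texttt{M}$, gives the reduction. Your extra checks are sound clarifications that the paper takes for granted: that $c\rightsquigarrow\overline{q_f}$ coincides with tree-acceptance, and that ``contains $\m W_\texttt{M}$'' really means contains $\m W^+_\texttt{M}$. Your closing hedge is unnecessary, since an undecidable ACM here means a fixed machine whose acceptance problem over input configurations is undecidable.
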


 \begin{corollary}\label{c: (G)BI}
The equational theory of BI is undecidable. The same holds for GBI.   
\end{corollary}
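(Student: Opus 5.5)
The plan is to derive the statement directly from Corollary~\ref{c: UndInt}, instantiated with the variety $\mathsf{BI}$ and then with $\mathsf{GBI}$. First I fix a $2$-ACM $\texttt{M}=(Q,R_2,I,q_f)$ with undecidable acceptance problem, as given by the theorem of Lincoln et al.\ recalled above. Here the machine is fixed and the input is a configuration $c$. The hypothesis of Corollary~\ref{c: UndInt} says the variety ``contains $\m W_\texttt{M}$''. As in Lemma~\ref{l: completness}, where satisfaction of $c\theta\leq q_f$ is read in $\m W^+_\texttt{M}$, I interpret this as containing the Galois algebra $\m W^+_\texttt{M}$. So the only real work is to show that $\m W^+_\texttt{M}$ lies in both varieties.

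For $\mathsf{BI}$, the preceding lemma shows that $\m W_\texttt{M}$ is a \emph{commutative} distributive residuated frame. By the review in Section~\ref{subsec-review-RF}, the Galois algebra of a commutative distributive residuated frame is a BI-algebra. Concretely, $\circ_\gamma$ is commutative because of $(\circ e)$, and the meet-part is a Heyting algebra because the structural conditions for $\omt$ make $\cap$ residuated by $\ra$. Hence $\m W^+_\texttt{M}\in\mathsf{BI}$, and Corollary~\ref{c: UndInt} gives undecidability of the equational theory of $\mathsf{BI}$. For $\mathsf{GBI}$, every BI-algebra is a GBI-algebra with $\ld$ and $\rd$ both interpreted as $\mimp$. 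So $\m W^+_\texttt{M}\in\mathsf{GBI}$ as well, and the same corollary applies.

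It remains to check that the reduction in Corollary~\ref{c: UndInt} is effective. The map $c\mapsto (c\theta\leq q_f)$ is computable, since $\theta$ depends only on the fixed finite instruction set $I$. I also need that ``$c\rightsquigarrow\overline{q_f}$'' coincides with acceptance of $c$ by $\texttt{M}$. This holds because a computation ending in a join of copies of $q_f$ is exactly a computation tree all of whose leaves are labelled $q_f$, and the joins are kept non-idempotent precisely so that leaves are not identified.

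The only point needing care, which I expect to be the main obstacle, is the claim that $\m W^+_\texttt{M}$ is genuinely a BI-algebra. This requires the general fact about Galois algebras of commutative distributive residuated frames, in particular distributivity and the Heyting structure coming from the $\omt$-conditions $(\omt e),(\omt c),(\omt i)$. If that fact needs more justification than the review gives, I would verify directly that $X\ra Y$ is closed and residuates $\cap$ on $\gamma$-closed sets, using the nuclear condition $(\omt N)$.
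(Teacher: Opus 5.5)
Your derivation is exactly the route the appendix intends: $\m W_\texttt{M}$ is commutative, so $\m W^+_\texttt{M}$ is a BI-algebra, and then Corollary~\ref{c: UndInt} applies. For $\mathsf{BI}$ this is sound. Lemma~\ref{l: soundness} holds in every BI-algebra because products commute there, and $\m W^+_\texttt{M}\in\mathsf{BI}$ gives the converse via Lemma~\ref{l: completness}. The step you single out as the main obstacle is routine. The real problem is the $\mathsf{GBI}$ half, and it is inherited from the appendix itself.

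Knowing $\m W^+_\texttt{M}\in\mathsf{GBI}$ only gives the completeness direction. Applying Corollary~\ref{c: UndInt} to $\mathcal V=\mathsf{GBI}$ also needs Lemma~\ref{l: soundness} in \emph{non-commutative} GBI-algebras. Its proof tacitly uses commutativity: writing $c=dR$ means moving the decremented register next to the state. The lemma in fact fails there. Take the decrements $q_1\rd q_0r_2$ and $q_f\rd q_1r_1$; the machine accepts $c=q_0r_1r_2$. Now work in the powerset GBI-algebra of the free monoid on $\{\mathsf q,\mathsf a,\mathsf b\}$, with the evaluation $e$ sending $q_0,r_1,r_2,q_1,q_f$ to $\{\mathsf q\},\{\mathsf a\},\{\mathsf b\},\{\mathsf{qb}\},\{\mathsf{qba}\}$. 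Then $e(i)=\{\varepsilon\}$, $e(t)=\top$ and $e(\theta)=\{\varepsilon\}$, hence $e(c\theta)=\{\mathsf{qab}\}\not\subseteq e(q_f)$.

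Choosing a different ordering of configurations does not help. Suppose a fork leads to two branches that consume $r_1,r_2$ in opposite orders. Then each ordering of $q_0r_1r_2$ is refuted by one of two models of this kind: without commutativity, the registers behave like a stack.

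So, as written, the appendix route (yours included) only gives undecidability for varieties $\mathcal V$ with $\m W^+_\texttt{M}\in\mathcal V\subseteq\mathsf{BI}$. For $\mathsf{GBI}$ you should instead invoke Theorem~\ref{t: main}. $\mathcal{P}_{\omega}(\mathbb{N})^+$ is a BI-algebra, hence a GBI-algebra, and every GBI-algebra is a disjointive distributive residuated lattice. The tiling reduction therefore already shows that the equational theory of $\mathsf{GBI}$ is undecidable.
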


Note that the argument via residuated frames establishes the undecidability in a way that does not use the axiom of choice either. Also, note that it establishes undecidability for fragments that do not contain the bottom element $\bot$ nor the negation connective $\neg$.

\end{document}